\def\PP{{\mathbb P}}
\def\v{{\nu}}
\def\Qbar{\overline{\mathbb Q}}
\def\0{{\mathbf 0}}
\def\1{{\mathbf 1}}
\def\cA{{\mathcal A}}
\def\cB{{\mathcal B}}
\def\cF{{\mathcal F}}
\def\cJ{{\mathcal J}}
\def\cO{{\mathcal O}}
\def\cR{{\mathcal R}}
\def\Kbar{{\bar K}}
\def\Gal{\mathrm{Gal}}
\def\supp{\mathrm{supp}}
\newtheorem{thm}{Theorem}
\newtheorem{prop}[thm]{Proposition}
\newtheorem{lemma}[thm]{Lemma}
\newtheorem{cor}[thm]{Corollary}
\newtheorem*{thm*}{Theorem}
\newtheorem*{alg*}{Algorithm}
\newtheorem*{lemma*}{Lemma}
\theoremstyle{remark}
\newtheorem{rmk}[thm]{Remark}
\newtheorem*{rmk*}{Remark}
\newtheorem*{notation*}{Notation}
\newtheorem{example}[thm]{Example}
\newtheorem*{example*}{Example}
\theoremstyle{definition}
\newtheorem{defn}[thm]{Definition}
\newtheorem*{defn*}{Definition}
\numberwithin{thm}{section}
\newcommand{\mybf}{\mathbb}
\newcommand{\bE}{\mybf{E}}
\newcommand{\bP}{\mybf{P}}
\newcommand{\bR}{\mybf{R}}
\newcommand{\bC}{\mybf{C}}
\newcommand{\bN}{\mybf{N}}
\newcommand{\bQ}{\mybf{Q}}
\newcommand{\bA}{\mybf{A}}
\newcommand{\Q}{\mybf{Q}}
\newcommand{\C}{\mybf{C}}
\renewcommand{\P}{\mybf{P}}
\newcommand{\al}{\alpha}
\providecommand{\abs}[1]{\lvert#1\rvert}
\providecommand{\norm}[1]{\lVert#1\rVert}
\newcommand{\ON}[1]{\operatorname{#1}}
\newcommand{\ra}{\rightarrow}
\newcommand{\twid}[1]{\widetilde{#1}}
\newcommand{\ep}{\epsilon}
\newcommand{\p}{\partial}
\newcommand{\Diag}{\mathrm{Diag}}
\def\talltareesidedbox#1{\setbox0=\hbox{$#1$}\dimen0=\wd0 \advance\dimen0 by3pt\rlap{\hbox{\vrule height10pt width.4pt
 depth2pt \kern-.4pt\vrule height10.4pt width\dimen0 depth-10pt\kern-.4pt \vrule height10pt width.4pt depth2pt}}
 \relax \hbox to\dimen0{\hss$#1$\hss}}%s\ignorespaces}
\def\tareesidedbox#1{\setbox0=\hbox{$#1$}\dimen0=\wd0 \advance\dimen0 by3pt\rlap{\hbox{\vrule height8pt width.4pt
 depth2pt \kern-.4pt\vrule height8.4pt width\dimen0 depth-8pt\kern-.4pt \vrule height8pt width.4pt depth2pt}}
\relax \hbox to\dimen0{\hss$#1$\hss}}%s\ignorespaces}
\def\shorttareesidedbox#1{\setbox0=\hbox{$#1$}\dimen0=\wd0 \advance\dimen0 by3pt\rlap{\hbox{\vrule height7pt width.4pt
 depth2pt \kern-.4pt\vrule height7.4pt width\dimen0 depth-7pt\kern-.4pt \vrule height7pt width.4pt depth2pt}}
 \relax \hbox to\dimen0{\hss$#1$\hss}}%s\ignorespaces}
\newcommand{\N}{\operatorname{N}}
\newcommand{\sP}{\mathsf{P}}
\newcommand{\sA}{\mathsf{A}}
\title[Equidistribution]{Stochastic Equidistribution and Generalized Adelic Measures}
\author[Doyle]{John R. Doyle}
\address{Department of Mathematics\\ Oklahoma State University, Stillwater, OK
74078}
\email{john.r.doyle@okstate.edu}
\author[Fili]{Paul Fili}
\address{Department of Mathematics\\ Oklahoma State University, Stillwater, OK
74078}
\email{paul.fili@okstate.edu}
\author[Tobin]{Bella Tobin}
\address{Department of Mathematics\\ Oklahoma State University, Stillwater, OK
74078}
\email{bella.tobin@okstate.edu}
\subjclass[2010]{11G50, 37P30, 37P50, 37P05}
\keywords{Equidistribution, stochastic dynamical systems, adelic measure.}
\date{\today}
\begin{document}

\begin{abstract}
 We study the dynamics of stochastic families of rational maps on the projective line. As such families can be infinite and may not typically be defined over a single number field, we introduce the concept of generalized adelic measures, generalizing previous notions introduced by Favre and Rivera-Letelier and Mavraki and Ye. Generalized adelic measures are defined over the measure space of places of an algebraic closure of the rationals, using a framework established by Allcock and Vaaler. This turns our heights from sums over places into integrals. We prove an equidistribution result for generalized adelic measures, and use this result to prove an equidistribution result for random backwards orbits in stochastic arithmetic dynamics.
\end{abstract}

\maketitle
\tableofcontents

\allowdisplaybreaks[2]

\section{Introduction}

Let $S$ be a countable set of rational maps defined over $\Qbar$. If we endow $S$ with a probability measure $\nu_1$, we think of the pair $(S,\v_1)$ as a \emph{stochastic dynamical system}, where for a map $\varphi \in S$, the quantity $\v_1(\varphi)$ represents the probability of applying $\varphi$ in a random walk. Recently, Healey and Hindes \cite{HealeyHindes} defined a stochastic height function and proved that, under certain assumptions, it exists and defines a Weil height.\footnote{In fact, they proved their result for endomorphisms of projective varieties. However, in this paper, we will restrict our attention to families of rational maps on the projective line.} (See also earlier work and an equidistribution theorem for random sequences of morphisms defined over a single number field in Kawaguchi \cite{KawaguchiRandomIter}.) We briefly recall their construction: We associate to an element $\gamma_n=(\varphi_1,\ldots, \varphi_n)\in S^n$ its natural composition map, and think of it as a function on the projective line $\bP^1(\Qbar)$, that is, we will write $\gamma_n(\al)=\varphi_n\circ \cdots\circ \varphi_1(\al)$ and take the degree of $\gamma_n$ to be $\deg(\gamma_n) = \deg(\varphi_1)\cdots \deg(\varphi_n)$. We endow $S^n$ with the natural product measure $\v_n = \v_1\times \cdots \times \v_1$, so that $\v_n((\varphi_1,\ldots, \varphi_n)) = \v_1(\varphi_1)\cdots \v_1(\varphi_n)$. Then the \emph{stochastic height} associated to $(S,\v_1)$, when it exists, is the function $h_S : \PP^1(\Qbar)\ra [0,\infty)$ defined by
\[
 h_S(\al) = \lim_{n\ra\infty} \bE_{S^n} \frac{1}{\deg(\gamma_n)} h(\gamma_n(\al)),
\]
where $h$ is the absolute logarithmic Weil height on $\P^1(\Q)$, and where the expectation is taken over $\gamma_n\in S^n$ with the probability measure $\v_n$. Notice that if $S=\{\varphi\}$ where $\varphi$ is rational map of degree at least $2$, and $\v_1(\varphi)=1$, this reduces to the usual (dynamical) canonical height associated to $\varphi$.

It is well-known in arithmetic dynamics that points of low height play a special role: the points of height zero are precisely the preperiodic points of $\varphi$, and it is known (see for example \cite{FRL,BakerRumely,ChambertLoirEqui,ChambertLoirThuillierMahler,ThuillierThesis,Yuan}) that sequences of points of small height equidistribute according to certain canonical measures determined by $\varphi$ at every place of the number field over which $\varphi$ is defined. A typical example of this is the sequence of probability measures supported equally on the set of preimages $\varphi^{-n}(\al)$ for each $n\in\bN$, where $\al$ is a point which is not exceptional for the map $\varphi$. These preimage sets consist of points of geometrically decreasing height, and equidistribute according the canonical measure at every place. This canonical measure is naturally supported on the Julia set of the map and thus is also tied to the dynamics of the map.

However, the situation for stochastic dynamical systems is a little bit more complicated: Healey and Hindes prove that while having stochastic height zero does guarantee that the forward orbit under the entire system $S$ is finite, they also prove that there are only finitely many points of height zero, unless the maps all share the same set of preperiodic points, or equivalently, the same Julia set at every place; see \cite[Corollaries 1.4-5]{HealeyHindes}. (We note in passing that typically two maps that have the same Julia sets share a common iterate, though there exceptions; see \cite{YeRationalFunctions}.) Of course, with only finitely many points of height zero, it is not immediately clear that there is any equidistribution theorem to prove. 

The goal of this paper is to prove that, in fact, a natural stochastic analogue of the equidistribution of preimages is true under certain natural restrictions on the stochastic family of rational maps $(S,\v_1)$. Our main theorem, proven as Theorem \ref{thm:stoch-equi} below, is the following:

\begin{thm*}[Equidistribution of random backwards orbits]
Let $S$ be a countable set of rational maps defined over an algebraic closure $\Qbar$, with each map being of degree at least $2$, and let $\nu_1$ be a probability measure on $S$ with respect to which the maps in $S$ are $L^1$ height controlled. Then for almost every place $y$ of $\Qbar$ there exists a certain canonical stochastic dynamical measure $\rho_y$ such that the following is true: Let $\al\in \bP^1(\Qbar)$ be any point which is not in the exceptional set of the stochastic system $S$. Then the backwards orbit measures $\Delta_{n,\al}$ under $S$ converge weakly to $\rho_y$ as $n \to \infty$.
\end{thm*}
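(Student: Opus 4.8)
The plan is to produce the measures $\rho_y$ as the \emph{stationary measures} of the random system at each place, to package $\rho=(\rho_y)_y$ as a single generalized adelic measure whose associated height is the stochastic height $h_S$, to show that the backwards orbit measures $\Delta_{n,\al}$ arise by iterating one mass-preserving operator on $\delta_\al$ and are ``$\rho$-small,'' and finally to feed this into the generalized adelic equidistribution theorem proved earlier.

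\emph{Constructing $\rho$.} Fix a place $y$; on probability measures on the Berkovich line over $\CC_y$, put $\tilde T_y\mu=\sum_{\varphi\in S}\nu_1(\varphi)\,\tfrac1{\deg\varphi}\,\varphi^{*}\mu$. This preserves total mass because $\sum_\varphi\nu_1(\varphi)=1$, and the $L^1$ height control is exactly what makes this (possibly infinite) sum of pullbacks converge and behave well. Existence of a fixed point $\rho_y$ is soft (Schauder--Tychonoff: $\tilde T_y$ is a continuous affine self-map of a compact convex set). For uniqueness and a convergence rate, pass to potentials relative to $\rho_y$ itself: the equation $\tilde T_y\rho_y=\rho_y$ forces the ramification and constant corrections to cancel, leaving the plain averaging operator $\mathcal L_y f=\sum_\varphi\nu_1(\varphi)\,\tfrac1{\deg\varphi}\,(f\circ\varphi)$, which contracts the sup-norm by $\kappa:=\sum_\varphi\nu_1(\varphi)/\deg\varphi\le\tfrac12$. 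Next one verifies that $\rho=(\rho_y)_y$ obeys the axioms of a generalized adelic measure over the Allcock--Vaaler place space: measurability of $y\mapsto\rho_y$ and of its potentials follows from the corresponding properties of $\varphi^{*}$ and from the place-independence of the contraction rate, and the integrability conditions are where the $L^1$ hypothesis re-enters. Finally, taking potentials of $\tilde T_y\rho_y=\rho_y$ and integrating over the place space, so the local additive constants drop out by the product-formula axiom, gives $h_\rho(x)=\sum_\varphi\nu_1(\varphi)\,\tfrac1{\deg\varphi}\,h_\rho(\varphi x)$; comparing with the limit defining the stochastic height yields $h_\rho=h_S$.

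\emph{The backwards orbit measures.} Unwinding the definition and using that the maps are i.i.d.\ gives $\Delta_{n,\al}=\tilde T_y^{\,n}\delta_\al$ at each place. Let $u_{n,y}$ be the potential of $\Delta_{n,\al}$ relative to $\rho_y$; the computation behind the functional equation turns into the recursion $u_{n+1,y}=\mathcal L_y u_{n,y}$ with $u_{0,y}=g_{\rho_y}(\,\cdot\,,\al)$, so $u_{n,y}(z)=\mathbb E_{\gamma_n}\!\big[\tfrac1{\deg\gamma_n}\,g_{\rho_y}(\gamma_n(z),\al)\big]$, which decays like $\kappa^{\,n}$ once $z$ is kept off the (at most countable) set $\bigcup_n\supp\Delta_{n,\al}$. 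This is where the hypothesis that $\al$ is not exceptional enters, ensuring both that $u_{n,y}$ stays finite along a large enough set of base points and that the limit is $\rho_y$ rather than some atomic measure; since $\kappa\le\tfrac12$ the decay is geometric, and convergence of potentials off a set of capacity zero then forces $\Delta_{n,\al}\rightharpoonup\rho_y$. Equivalently, and more in keeping with the architecture of the paper, the same recursion shows the global energy $\int_Y I_{\rho_y}(\Delta_{n,\al}-\rho_y)\,dy$ decays to $0$, so that the sequence $(\Delta_{n,\al})_n$ satisfies the hypotheses of the generalized adelic equidistribution theorem; that theorem then upgrades the global smallness to weak convergence to $\rho_y$ at \emph{almost every} place $y$ in one stroke.

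\emph{Main obstacle.} The crux is not the existence of the stationary measures, which is routine, but the proof that $\rho=(\rho_y)_y$ really is a generalized adelic measure in the Allcock--Vaaler sense -- measurability of $y\mapsto\rho_y$ and, above all, integrability of the local heights/energies over the space of places -- since this is the step that must convert the $L^1$ height-control hypothesis on $(S,\nu_1)$ into an honest integral over places, and it is here that the infinitude of $S$ and the absence of a common field of definition bite. A secondary issue is pinning down the exceptional set of the stochastic system (the natural candidate being $\{\al: \bigcup_n\bigcup_{\gamma_n}\gamma_n^{-1}(\al)\text{ finite}\}$) and checking that outside it the $\Delta_{n,\al}$ are admissible inputs to the equidistribution theorem; and if one wants the statement for $\nu_1^{\mathbb N}$-almost every random sequence of maps rather than for the expected measures $\Delta_{n,\al}$, an extra law-of-large-numbers argument along the orbit is required.
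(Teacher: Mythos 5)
Your overall architecture matches the paper's: construct the stochastic canonical measures $\rho_y$, package them as a generalized adelic measure with $h_\rho=h_S$, recognize $\Delta_{n,\al}$ as iterates of the expected normalized pullback operator applied to $\delta_\al$, drive the global height to zero, and invoke the generalized adelic equidistribution theorem. But that theorem needs \emph{two} inputs: $h_\rho(\Delta_{n,\al})\to 0$ \emph{and} that the sequence is well-distributed, i.e.\ $\sum_z \Delta_{n,\al}(\{z\})^2\to 0$, and the latter is where non-exceptionality does its real work. You relegate this to a ``secondary issue'' to be ``checked,'' but it is not routine: if $\al$ were exceptional (e.g.\ $S=\{z^2\}$, $\al=0$) the heights still tend to $0$ while the measures stay atomic, so height decay alone cannot yield the conclusion; and your pointwise-potential alternative has the same defect — the claimed $\kappa^n$ decay of $u_{n,y}=\mathcal L_y^n u_{0,y}$ uses a sup-bound on $g_{\rho_y}(\cdot,\al)$ that fails near the backward orbit, and without an a priori bound on the atoms of $\Delta_{n,\al}$ a subsequential weak limit could still charge classical points. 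The paper's proof of non-concentration is a genuinely separate argument: $\sup_z\Delta_{n,\al}(\{z\})$ is non-increasing and contracts by a uniform factor $M<1$ every \emph{three} steps, because (via Riemann--Hurwitz) any point at which every length-three composition $\gamma_3\in S^3$ is totally ramified must lie in $E_S$; two steps do not suffice (the paper gives the counterexample $S=\{1/z^2,\,z^2+1\}$ at $\infty$). Nothing in your proposal supplies this step or a substitute for it.

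A second, smaller gap: you assert that ``the same recursion shows'' the global energy $h_\rho(\Delta_{n,\al})\to 0$. Energy is quadratic while your operator is an \emph{average} of normalized pullbacks, so the local energies at a fixed place do not simply contract under $\tilde T_y$. The paper obtains $h_S(\Delta_{n+1,\al})\le\frac12 h_S(\Delta_{n,\al})$ globally, using the pushforward identity $\bE_S\,\varphi_*(\Delta_{n+1,\al})=\Delta_{n,\al}$, the stochastic scaling relation $h_S(\Delta)=\bE_S\,h_S(\varphi_*\Delta)/\deg\varphi$, and, crucially, the fact that $h_\rho$ of a countable convex combination of discrete measures equals the corresponding combination of heights — a statement whose proof must kill the discriminant-type cross terms by the product formula, with Tonelli-type justifications for infinitely supported measures; none of this is supplied or replaced in your sketch. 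Your construction of $\rho$ is fine in outline (the paper builds $\rho_y$ concretely as the weak limit of $\bE_{S^n}\gamma_n^*(\lambda_y)/\deg\gamma_n$ via a telescoping series of potentials with $C(y)\le 2C_S(y)$, which settles both measurability and the $L^1$ axiom you correctly flag), and identifying $h_\rho=h_S$ and the fixed-point property is consistent with the paper's pullback formula; but as written the proposal is missing the two load-bearing steps above, the first of which is the heart of the theorem.
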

The precise definitions used above can be found in Section \ref{sec:stoch-dyn} below, but here we give an informal summary of the conditions of the main theorem:
\begin{enumerate}
    \item \label{item:2} For each rational map, it is known that the canonical height $h_\varphi$ and the standard height differ by at most an explicit constant $C_\varphi$ depending on $\varphi$. Then expected value of this constant over the stochastic family is assumed to be finite.
    \item The backwards orbit measures $\Delta_{n,\alpha}$ are defined recursively by writing each measure $\Delta_{n,\alpha}$ as a countable sum of weighted point masses in $\P^1(\Qbar)$ (starting with $\Delta_{0,\alpha} = \delta_\alpha$, the unit point mass at $\alpha$), then taking all possible preimages of the points in the support of $\Delta_{n,\alpha}$ under the maps in $S$, weighted according to the probability $\nu_1(\varphi)$.
    
    The assumption that $\alpha$ is not exceptional simply means that the grand orbit of $\alpha$ is infinite.
\end{enumerate}

In order to prove (or even precisely state) our theorem, we need to extend the current framework for talking about heights on the projective line. In order to explain what is new in our framework, let us briefly recall the concept of adelic measures due to Favre and Rivera-Letelier \cite{FRL} and its relation to height. 

The notion of an \emph{adelic measure} was introduced by Favre and Rivera-Letelier \cite{FRL} in order to generalize the idea of the canonical Weil height associated to the dynamics of rational map, based on its canonical measure at every place. Adelic measures are families of measures, defined over a number field, which are the `standard measure' (associated to the local Weil height) at almost all places of the base field and which differ from the standard measure in a controlled fashion at the remaining places. 

This notion was later generalized by Mavraki and Ye \cite{MavrakiYe} to the notion of a \emph{quasi-adelic measure}, which allowed infinitely many `bad' places where the measure was not the standard measure, provided that a global summability condition was met in order to ensure that the resulting height was still a Weil height. Mavraki and Ye then proved an equidistribution result for quasi-adelic measures. Their generalization was inspired by the study of dynamical heights in parameterized families, where it turned out that many of the resulting height functions failed to be adelic in the original sense of Favre and Rivera-Letelier. 

The heights associated to adelic and quasi-adelic measures have one common restriction: they are all defined over a single base global field which satisfies the product formula, for example, a number field. Heights are then computed as averages of Galois orbits of points over this base field. As we wish to study the stochastic dynamics of families of maps which may not be defined over a single base field, we will need a more general framework. The framework for this more general definition of the height was developed by Allcock and Vaaler \cite{AV} and will allow us to define the notion of a \emph{generalized adelic measure} below.

As we are interested in heights associated to stochastic dynamical systems, we will give a construction that, under the assumptions on the family $S$ of rational maps with probability measure $\v_1$ given above, allows us to define a generalized adelic measure $\rho$ associated to $S$ such that $h_S = h_\rho$. We note that there are three basic levels of definition for the generalized adelic measure $\rho$:
\begin{enumerate}
    \item If $S$ is finite, then $\rho$ is an adelic measure in the sense of Favre and Rivera-Letelier. This is because there is a single number field over which all maps in $S$ are defined, and a finite number of places of bad reduction in total, so the resulting measure for the stochastic dynamical system will also be defined over a single number field and equal to the standard measure at all but finitely many places.
    \item If $S$ is infinite, but all $\varphi\in S$ are defined over a single number field $K$, then $\rho$ may not be an adelic measure, as there may be infinitely many places of bad reduction. Our $L^1$ height control assumption guarantees, however, that the resulting measure is still a quasi-adelic measure in the sense of Mavraki and Ye.
    \item If $S$ is infinite and the field of definition is of infinite degree, then $\rho$ may not be an adelic measure or quasi-adelic measure, but the $L^1$ height control assumption guarantees that it is a generalized adelic measure.
\end{enumerate}

In Theorem \ref{thm:gen-adelic-equi}, we prove analogues of the main theorems of Favre and Rivera-Letelier \cite[Th\'eor\`emes 1-2]{FRL}, namely, that the heights associated to generalized adelic measures are essentially-positive Weil heights, and that points of small height equidistribute according to the generalized adelic measure; however, this equidistribution result only holds outside a set of places of measure zero, according to a natural measure on the space of places. This measure is the natural extension of the notion of `local degree over global degree' at a place of a number field, and in particular, the set of places of $\Qbar$ which lie over a given place of a number field always has positive measure, so that the equidistribution theorem, when applied to a generalized adelic measure defined over a single number field, applies at every place. 

Proving results in this context has required several innovative definitions and proofs. For example, heights are no longer defined in terms of average Galois orbits of a point over the base field, and some arguments that relied on this concept have changed. We define our heights more generally for discrete probability measures on $\bP^1(\Qbar)$, possibly with infinite support, and we classify which discrete probability measures have finite height.
In fact, we prove that finiteness of the height of $\Delta$ depends only on $\Delta$ and not on which generalized adelic measure is being used (Corollary \ref{cor:finite-height-is-same-for-all-heights}).

The structure of this paper is as follows. In Section \ref{sec:gen-adelic} we define the notion of a generalized adelic measure, introduce the notion of heights for discrete probability measures on $\bP^1(\Qbar)$ and introduce the notion for finite height for a discrete probability measure. 

In Section \ref{sec:equi-for-gen-adelic}, we prove the main theorems on the existence of Weil heights associated to generalized adelic measures. We then introduce a notion of what it means for a sequence of discrete probability measures to be \emph{well-distributed} in Definition \ref{defn:well-distributed}. This notion replaces the classical notion of the degree of our points tending to infinity in the equidistribution theorem, which may no longer apply as even one step in a backwards orbit for an infinite family of maps may result in a discrete probability measure concentrated on a countably infinite set. We then prove a local equidistribution theorem for such measures in Proposition \ref{prop:local-equi}. The global theorem then follows in Theorem \ref{thm:gen-adelic-equi}, the main result of this section. We note that even when our generalized adelic measure is an adelic measure in the original sense of Favre and Rivera-Letelier, these equidistribution theorems are still an expansion of the original equidistribution theorems, as we prove equidistribution for a much wider class of probability measures on $\bP^1(\Qbar)$. 

Finally, in Section \ref{sec:stoch-dyn}, we prove that for an $L^1$ height controlled stochastic family of rational maps, the stochastic height is given by a generalized stochastic measure associated to the family. We prove a stochastic pullback formula for the stochastic dynamical system akin to the classical pullback formula in Theorem \ref{thm:pullback-formula}. We note that an analogue of the pushforward formula does not, in general, hold for stochastic families. Finally, we define the notion of a \emph{random backwards orbit measure} associated to a starting point $\al\in\bP^1(\Qbar)$, and we prove the stochastic equidistribution for random backwards orbits of non-exceptional points in Theorem \ref{thm:stoch-equi}. We conclude with a definition of the stochastic Julia and Fatou sets associated to the stochastic system. 

In a forthcoming paper \cite{DFT_II}, the authors will prove analogues of classical results about the dynamics of the Julia and Fatou sets for stochastic dynamical system. 

\subsection*{Acknowledgments}
The authors would like to thank Joseph Silverman for helpful suggestions, Xander Faber for suggesting a reference to \cite{GublerMfields}, and Paul Nguyen for computational assistance.

\section{Generalized Adelic Measures}\label{sec:gen-adelic}

\subsection{Definitions}
Let us start by recalling the basic notation and measures associated to the space of places of $\Qbar$ from \cite{AV}. This construction makes $\Qbar$ into an $M$-field in the sense of Gubler \cite{GublerMfields}. We note that this definition works more generally for a global base field in place of $\bQ$; however, we will focus on the case of $\bQ$ as the prime field here. For a number field $K$, let $M_K$ denote its set of places. Notice that the collection of sets $M_K$ as $K$ ranges over finite extensions of $\bQ$ forms a projective system with the natural maps $M_L\ra M_K$ for $L\supset K$ given by restriction of places. The set of places of $\Qbar$ is then the projective limit
\[
 Y = \varprojlim_K M_K,
\]
where $K$ ranges over finite extensions of $\bQ$. If we endow each $M_K$ with the discrete topology, this induces a natural projective topology on $Y$. Notice that each $y\in Y$ has an associated absolute value $\abs{\cdot}_y$, and that for any place $v\in M_K$, we have a natural notion of $y\in Y$ extending $v$ or not, so we can still ask if $y$ lies above $v$, denoted by $y\mid v$ as usual. Further, each $y\mid p$ corresponds to a choice of embedding $\Qbar\hookrightarrow \bC_p$, and by making a choice for one embedding at each prime $p$, the remaining choices are all determined. We shall assume that we have made such a choice for each rational prime $p$.

Let us denote, for each number field $K$ and place $v\in M_K$, the set  
\[
 Y(K,v) = \{ y\in Y : y\mid v\}.
\]
Notice that with the subspace topology induced by $Y$, this set is compact open (in fact, it has the profinite topology, as a projective limit of the finite sets of places above $v$ for each number field containing $K$). Allcock and Vaaler demonstrated \cite[\S 4]{AV} that the absolute Galois group acts naturally on each set $Y(\bQ,p)$ for $p\in M_\bQ$, and that its Haar measure induces a natural Borel probability measure $\mu = \mu|_{Y(\bQ,p)}$ such that for each $v\mid p$ a place of $K/\bQ$, 
\[
 \mu(Y(K,v)) = \frac{[K_v : \bQ_p]}{[K:\bQ]}.
\]
This defines a Borel measure $\mu$ on $Y$ given by using this local measure on every (disjoint) set $Y(\bQ,p)$. For an algebraic number $\al\in K$, the usual absolute logarithmic Weil height can then be expressed in two ways: 
\[
 h(\al) = \sum_{v\in M_K} \frac{[K_v:\bQ_v]}{[K:\bQ]} \log^+ \abs{\al}_v = \int_Y \log^+ \abs{\al}_y\,d\mu(y), 
\]
where the single bar notation $\abs{\,\cdot\,}_y$ for $y\mid p$ agrees with the usual normalization of the absolute value of $\bC_p$. Notice that in the above equation, we are thinking of $f(y) = \log^+ \abs{\al}_y$ as a real-valued function of the places $Y$ of $\Qbar$. Indeed, it is easy to see that $f(y)$ is locally constant on the sets $Y(K,v)$ as $v$ ranges over $M_K$, making it continuous, and further that it is compactly supported, as it is nonzero at only a finite number of places $v$ of $K$. 

Define the \emph{standard measure} at a place $v$ to be
\begin{equation}
 \lambda_v = \begin{cases}
              d\theta/2\pi & \text{if }v\mid \infty,\\
              \delta_{\zeta_{0,1}} & \text{if }v\nmid \infty,
             \end{cases}
\end{equation}
where $d\theta/2\pi$ is the usual normalized arc-length measure of the unit circle in $\sA^1_v = \bC$ if $v\mid\infty$, and $\delta_{\zeta_{0,1}}$ denotes the point mass at the Gauss point $\zeta_{0,1}$ if $\sA^1_v = \bA^1_\text{Berk}(\bC_v)$ if $v\nmid \infty$. Notice that this definition makes sense regardless of whether $v$ is a place of a number field $K$ or a place of $\Qbar$. 

Favre and Rivera-Letelier define an \emph{adelic measure} $\rho$, defined over a base number field $K$, as a collection of Borel probability measures $\rho = (\rho_v)_{v\in M_K}$ where each $\rho_v$ is a Borel probability measure on $\sP^1_v$, the Berkovich projective line over $\bC_v$, with the following conditions:
\begin{enumerate}
 \item At each $v\in M_K$, $\rho_v$ admits a continuous potential with respect to the standard measure, that is, there is a continuous function $g_v : \sP^1_v \ra \bR$ such that $\Delta g_v = \rho_v - \lambda_v$, where $\Delta$ denotes the normalized Laplacian\footnote{When $v$ is archimedean, $\sP^1_v = \bP^1(\bC)$, and $\Delta = (1/2\pi)(\partial^2 / \partial x^2 + \partial^2 / \partial y^2)$. For the definition in the case of a non-archimedean $v$ we refer the reader to \cite[\S 4]{FRL}.} on the Berkovich projective line $\sP^1_v$.
 \item At all but finitely many places $v\in M_K$, we have $\rho_v = \lambda_v$.
\end{enumerate}
We recall that $\sP^1_v = \sP^1(\bC_v)$ is the Berkovich projective line, a Hausdorff, compact, and uniquely path-connected space in which $\bP^1(\bC_v)$ is a dense subspace. For more details on Berkovich space, we refer the reader to, for example,  \cite{FRL,BakerRumelyBook,BakerBerkArticle}.

They then define a height function $h_\rho$ associated to any adelic measure $\rho$, and prove  \cite[Th\'eor\`eme 1]{FRL} that the height function $h_\rho$ satisfies:
\begin{enumerate}
 \item $h_\rho(\al) = h(\al) + O(1)$ where the big-$O$ constant is independent of $\al$ and depends only on $\rho$, and 
 \item $h_\rho$ is essentially nonnegative, in the sense that for any $\ep>0$, the set $\{\al\in \overline{K} : h_\rho(\al) < -\ep \}$ is finite.
\end{enumerate}

We will need to generalize our notion of adelic measure to deal with the fact that, when we define a stochastic height for an infinite family of maps, first, we may no longer have a single number field over which all of the maps are defined, and second, we may have bad reduction at an infinite number of places.

\begin{defn}\label{defn:gen-adelic-measure}
 For each $y\in Y$ a place of $\Qbar$, let $\cB_y$ denote the space of Borel probability measures on the Berkovich projective line $\sP^1_y$ over $\bC_y$. A \emph{generalized adelic measure} $\rho$ is a function $Y \ra \cB_y$ that satisfies the following conditions:
 \begin{enumerate}
  \item \label{enum:gen-adelic-1}For every rational prime $p$, there exists a measurable function
  \begin{equation*}\label{eqn:g-eqn}
   g : Y(\bQ,p) \times \sP^1(\bC_p) \ra [-\infty,\infty]
  \end{equation*}
  such that, for $\mu$-almost all $\Qbar$-places $y\in Y(\bQ,p)$, the function $g_y(z) = g(y,z)$ is a continuous function $\sP^1(\bC_p) \ra \bR$, normalized so that $g_y(\infty) = 0$, which satisfies $\Delta g_y = \rho_y - \lambda_y$. Note that for any given $y\in Y(\bQ,p)$, we identify $\sP^1_y$ with $\sP^1(\bC_p)$, where the embedding is determined by our choice of place $y$.
  \item \label{enum:gen-adelic-2}If $C : Y \ra [0,\infty]$ is the function defined, for the $\mu$-almost everywhere set where $g_y$ defined above is continuous, by 
  \[
  C(y) = \sup_{z\in \sP^1_v} \abs{g_y(z)},
  \]
  and $C(y) = \infty$ on the remaining $\mu$-measure zero set of places $y$, then $C\in L^1(Y)$, that is,
  \[
  \int_Y C(y) \,d\mu(y) < \infty.
  \]
 \end{enumerate}
\end{defn}

It is important to note that a generalized adelic measure may have a nonempty set of places of $\mu$-measure zero where the measure $\rho_y$ does not admit a continuous potential with respect to the standard measure. At these places, equidistribution results may not hold. An example of such a stochastic dynamical system is given in Example \ref{ex:bad-places}.

\begin{defn}
 Let $\rho$ be a generalized adelic measure, and for each rational prime $p$ let $g : Y(\Q,p) \times \sP^1(\C_p) \to [-\infty,\infty]$ be as in Definition~\ref{defn:gen-adelic-measure}. If there exists a number field $K$ such that $g_{y_1} = g_{y_2}$ whenever $y_1$ and $y_2$ lie over the same place of $K$, then we say that $\rho$ is \emph{defined over $K$}.
\end{defn}

As we noted in the introduction, we will show below in Section \ref{sec:comparision-to-MY} that when a generalized adelic measure is defined over a single number field $K$, the concept reduces to that of a quasi-adelic measure or an adelic measure, depending on whether the number of places where the measure is not equal to the standard measure is finite or infinite. Our goal in introducing this generalization is to naturally capture appropriate limits of such measures as the base fields of definition $K$ grow. But first, we will define the height associated to a generalized adelic measure and prove some of its properties. 

First, for any place $y\in Y$ and Borel probability measures $\rho_y,\sigma_y$ on $\sP^1_y$, define (when it exists) the \emph{local energy pairing} to be:
\begin{equation}\label{eqn:local-pairing}
 (\rho_y,\sigma_y)_y = {\iint}_{\sA^1_y\times\sA^1_y\setminus\Diag_y} -\log \abs{z-w}_y\,d\rho_y(z)\,d\sigma_y(w),
\end{equation}
where $\sA^1_y=\sA^1(\bC_y)$ denotes the Berkovich affine line over $\bC_y$ and $\Diag_y = \{ (z,z) : z\in\bC_y\}$. (Note that we are only excluding the \textit{classical}, that is, type I, points of the diagonal.)

One key fact which we will have occasion to use several times in this paper is that if $\rho_y,\sigma_y$ are two probability measures on $\sP^1_y$ for some place $y\in Y$ which admit continuous potentials, then 
\begin{equation}
 (\rho_y-\sigma_y, \rho_y-\sigma_y)_y\geq 0,
\end{equation} 
with equality if and only if $\rho_y=\sigma_y$. Proofs of this fact can be found in \cite[Prop. 2.6, Prop. 4.5]{FRL}; see also \cite[Theorem 1]{F-unlikely-int}, which among other things proves that the closely related Arakelov-Zhang pairing is the square of a metric on the space of adelic measures.

It is important that the reader be aware that, whenever $y$ is a non-archimedean place, the kernel $\abs{z-w}_y$ in \eqref{eqn:local-pairing} and throughout this paper must be read as the natural extension to the Berkovich line $\sA^1_y$ of the usual distance, which is denoted by $\sup\{z,w\}$ in the article of Favre and Rivera-Letelier \cite[\S 3.3]{FRL} and as the \emph{Hsia kernel} $\delta(z,w)_\infty$ in the book of Baker and Rumely \cite[\S 4]{BakerRumelyBook}. As the appropriate extension is canonically defined, there is no danger in keeping the `classical' notation, so long as the reader is well aware that this is no longer actually a distance function on $\sP^1_y$; as an important example, if $\zeta$ is the Gauss point of $\sP^1_y$, then $\delta(\zeta,\zeta)_\infty = \sup\{\zeta,\zeta\}=1>0$. (In fact, all non-classical points $\zeta$ of the Berkovich line over $\bC_y$ have positive Hsia kernel $\delta(\zeta,\zeta)_\infty>0$.)

For a generalized adelic measure $\rho$ and a discrete probability measure $\Delta$ on $\bP^1(\Qbar)$, we define the \emph{height} of $\Delta$ associated to $\rho$, when it exists, to be:
\begin{equation}\label{eqn:height-defn}
 h_\rho(\Delta) = \frac12 \int_Y (\rho_y - \Delta, \rho_y - \Delta)_y\,d\mu(y).
\end{equation}
When $\rho_y=\lambda_y$ is the standard measure for every $y\in Y$, then we will show that this definition extends the notion of the Weil height to discrete probability measures on $\bP^1(\Qbar)$. In fact, this extends linearly, as we will show in Proposition \ref{prop:height-average} below; however, this fact is not immediate, as we must prove the vanishing of discriminant-type terms over discrete probability measures which may have infinite support, and existence and finiteness of these quantities is nontrivial.

Notice that $\Delta$ defines a well-defined discrete probability measure in $\sP^1_y$ for every place $y\in Y$, as each $y$ defines an embedding of $\Qbar$ into $\bC_y=\bC_p$ where $y\in Y(\bQ,p)$. We note one key difference between our definition and that of Favre and Rivera-Letelier: Favre and Rivera-Letelier define the height as a sum over the places of $K$ where the adelic measure $\rho$ is defined over $K$, and for any $\al\in \Qbar$ for which one wants to compute the height, one first forms the measure:
\[
 [\al]_K = \frac{1}{\# (G_K\cdot \al)} \sum_{z\in G_K \cdot \al} \delta_z,
\]
where $G_K= \Gal(\overline{K}/K)$ denotes the absolute Galois group over $K$ and $\delta_z$ denotes the point mass at $z$. In this fashion, the measure $[\al]_K$ is a probability measure on $\Qbar$ which is in fact invariant under $G_K$, and therefore defines for any single place $v\in M_K$ the same measure for every $y\in Y(K,v)$. They then define $h_\rho(\al)$ to be given by what we denote as $h_\rho([\al]_K)$. This has the convenience of not requiring them to change the places over which they are summing by using $K(\al)$ instead of $K$, however, as we will be dealing with generalized adelic measures, which are limits of certain adelic measures that can be defined over increasing towers of number fields, it is neither practical nor helpful to use this trick. So we will define instead, for $\rho$ a generalized adelic measure and for any $\al\in \Qbar$,
\begin{equation}
 h_\rho(\al) = h_\rho(\delta_\al),
\end{equation}
where $\delta_\al$ is the point mass at $\al$. As we compute the height via \eqref{eqn:height-defn}, we note that the embedding of $\delta_\al$ as a measure in $\sA^1_y$ will change depending on the place $y$ of $\Qbar$.

We now want to show that $h_\rho$ for generalized adelic measures $\rho$ shares largely the same properties as the heights $h_\rho$ for adelic measures $\rho$. We start with a definition which will characterize the discrete measures which have well-defined heights.

\begin{defn}\label{defn:Delta-height-bounded}
 Let $\Delta$ be a discrete probability measure on $\bP^1(\Qbar)$. We say that $\Delta$ \emph{has finite height} if 
 \[
  \int_Y -(\lambda_y, \Delta)_y \,d\mu(y) =  \int_Y \int_{\sA^1_y} \log^+ \abs{z}_y \,d\Delta(z)\,d\mu(y) <\infty.
 \]
 Notice that the integrand is nonnegative, and further, we have explicitly excluded any support at $\infty\in \bP^1(\Qbar)$ from the integral.
\end{defn}
\begin{rmk}
We remind the reader that a discrete measure is one which can be written as a weighted sum of at most countably many Dirac measures, however, in any given completion, the support set of the measure need not be a discrete set in the usual topological sense.
\end{rmk}

As the integrand $\log^+\,\abs{z}_y$ in Definition \ref{defn:Delta-height-bounded} is a compactly supported locally constant function when $\Delta=\delta_\al$ (locally constant on the sets $Y(\bQ(\al),v)$, for $v$ a place of $\bQ(\al)$), this is also true more generally when $\Delta$ is finite supported. In these cases the integral becomes a finite sum. It follows that we have the following lemma:
\begin{lemma}\label{lemma:finite-Delta-have-finite-height}
 If $\Delta$ is a finitely supported probability measure on $\bP^1(\Qbar)$, then $\Delta$ has finite height.
\end{lemma}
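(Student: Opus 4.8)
The plan is to reduce the statement directly to the classical fact that the absolute logarithmic Weil height of an algebraic number is finite, together with the integral formula $h(\al)=\int_Y\log^+\abs{\al}_y\,d\mu(y)$ recalled in Section~\ref{sec:gen-adelic}. In other words, for finitely supported $\Delta$ the quantity in Definition~\ref{defn:Delta-height-bounded} is literally a finite convex combination of ordinary Weil heights.

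First I would write $\Delta=\sum_{i=1}^n c_i\,\delta_{\al_i}$ with $\al_1,\dots,\al_n\in\bP^1(\Qbar)$ distinct, $c_i>0$, and $\sum_i c_i=1$. Substituting this into the integrand of Definition~\ref{defn:Delta-height-bounded}, and using that the inner integral over $\sA^1_y$ is by definition taken so as to exclude any mass at $\infty$, one gets $\int_{\sA^1_y}\log^+\abs{z}_y\,d\Delta(z)=\sum_{i\,:\,\al_i\neq\infty}c_i\log^+\abs{\al_i}_y$, a finite sum of nonnegative terms. Each summand $y\mapsto\log^+\abs{\al_i}_y$ is, exactly as observed in the text, locally constant on the sets $Y(\bQ(\al_i),v)$ and supported on the finitely many places at which $\abs{\al_i}_v>1$, hence measurable (indeed continuous and compactly supported) on $Y$, so there is no integrability concern.

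Next I would integrate over $Y$ and interchange the finite sum with the integral, which is legitimate by linearity of the integral with no convergence theorem needed, obtaining
\[
\int_Y\int_{\sA^1_y}\log^+\abs{z}_y\,d\Delta(z)\,d\mu(y)=\sum_{i\,:\,\al_i\neq\infty}c_i\int_Y\log^+\abs{\al_i}_y\,d\mu(y)=\sum_{i\,:\,\al_i\neq\infty}c_i\,h(\al_i),
\]
where the last equality is precisely the integral expression for the Weil height recalled earlier. Since each $\al_i\in\Qbar$ has finite Weil height and the outer sum has finitely many terms, the right-hand side is finite, which is exactly the assertion that $\Delta$ has finite height.

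I do not expect any real obstacle here: the only points that deserve a word are the exclusion of the point at infinity from Definition~\ref{defn:Delta-height-bounded} (which is built into the statement) and the measurability of $y\mapsto\log^+\abs{\al_i}_y$, both of which are immediate from remarks already made in the text. The lemma is essentially just the observation that, for finitely supported $\Delta$, finite height means a finite convex combination of classical Weil heights.
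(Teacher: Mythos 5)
Your argument is correct and is essentially the paper's own: the text proves the lemma by observing that for finitely supported $\Delta$ the integrand $y\mapsto\int_{\sA^1_y}\log^+\abs{z}_y\,d\Delta(z)$ is a compactly supported locally constant function, so the integral over $Y$ reduces to a finite sum of finite terms, which is exactly your identification of $h(\Delta)$'s defining integral with a finite convex combination $\sum_i c_i\,h(\al_i)$ of classical Weil heights via $h(\al)=\int_Y\log^+\abs{\al}_y\,d\mu(y)$. No gaps; nothing further is needed.
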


\subsection{Existence results for finite height measures}
Our goal is now to justify the definition of `finite height' by proving:
\begin{prop}\label{prop:rho-height-exists-well-defined}
 The height $h_\rho(\Delta)$ defined in \eqref{eqn:height-defn} exists and is finite for every generalized adelic measure $\rho$ and every discrete probability measure $\Delta$ on $\bP^1(\Qbar)$ which has finite height. 
\end{prop}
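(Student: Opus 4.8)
The plan is to expand the pairing $(\rho_y - \Delta, \rho_y - \Delta)_y$ bilinearly and show that each of the resulting pieces is integrable over $Y$. Writing $\rho_y - \Delta = (\rho_y - \lambda_y) - (\Delta - \lambda_y)$ and using bilinearity of the local energy pairing, we get
\[
 (\rho_y - \Delta, \rho_y - \Delta)_y = (\rho_y - \lambda_y, \rho_y - \lambda_y)_y - 2(\rho_y - \lambda_y, \Delta - \lambda_y)_y + (\Delta - \lambda_y, \Delta - \lambda_y)_y,
\]
so it suffices to control each of these three terms. The first term is handled purely by the hypotheses on $\rho$: since $g_y$ is a continuous potential with $\Delta g_y = \rho_y - \lambda_y$ and $\sup_z |g_y(z)| = C(y)$, the standard potential-theoretic estimate gives $(\rho_y - \lambda_y, \rho_y - \lambda_y)_y = \int g_y\, d(\rho_y - \lambda_y) \le 2C(y)$, which lies in $L^1(Y)$ by Definition~\ref{defn:gen-adelic-measure}(\ref{enum:gen-adelic-2}) (and at the measure-zero set of bad places the integrand does not affect the integral). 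For the cross term, I would again use the potential: $(\rho_y - \lambda_y, \Delta - \lambda_y)_y = \int g_y\, d(\Delta - \lambda_y)$, which is bounded in absolute value by $2C(y) \in L^1(Y)$; here the normalization $g_y(\infty) = 0$ and the fact that $\Delta$ has no mass "pushed to $\infty$" issues are handled because $g_y$ is genuinely continuous on all of $\sP^1(\bC_p)$.

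The real work is the third term, $(\Delta - \lambda_y, \Delta - \lambda_y)_y$, which is where the finite-height hypothesis enters. Expanding, $(\Delta - \lambda_y, \Delta - \lambda_y)_y = (\Delta,\Delta)_y - 2(\lambda_y, \Delta)_y + (\lambda_y, \lambda_y)_y$. The term $(\lambda_y,\lambda_y)_y$ is a place-dependent constant (it is $0$ for non-archimedean $y$ up to the Hsia-kernel subtlety, and an explicit constant for archimedean $y$) whose integral over $Y$ is finite. The term $-(\lambda_y, \Delta)_y = \int \log^+|z|_y\, d\Delta(z)$ is, by definition, integrable over $Y$ precisely because $\Delta$ has finite height — this is the whole point of Definition~\ref{defn:Delta-height-bounded}. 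The genuinely delicate piece is the "self-energy" $(\Delta,\Delta)_y = \iint_{\sA^1_y \times \sA^1_y \setminus \Diag_y} -\log|z-w|_y \, d\Delta(z)\, d\Delta(w)$: one must show this double integral exists (the integrand can be $+\infty$ on the excluded diagonal's closure and $-\infty$ far away) and is integrable in $y$. I would split $-\log|z-w|_y = \log^+\frac{1}{|z-w|_y} - \log^+|z-w|_y$; the negative part is dominated by $\log^+|z|_y + \log^+|w|_y + O(1)$ and hence controlled by the finite-height condition applied to each variable, while for the positive (singular) part one uses that $\Delta = \sum_i c_i \delta_{\alpha_i}$ is a weighted sum of point masses over a set of algebraic numbers whose pairwise differences $\alpha_i - \alpha_j$ are themselves algebraic numbers, so $\iint \log^+ \frac{1}{|z-w|_y}\, d\Delta\, d\Delta = \sum_{i \ne j} c_i c_j \log^+\frac{1}{|\alpha_i - \alpha_j|_y}$, and integrating over $Y$ turns each $\int_Y \log^+\frac{1}{|\alpha_i - \alpha_j|_y}\, d\mu(y)$ into (a piece of) the Weil height $h(\alpha_i - \alpha_j)$, which is finite; the product-formula/height machinery then lets one bound $h(\alpha_i - \alpha_j)$ in terms of $h(\alpha_i)$, $h(\alpha_j)$, and absorb the sum against $\sum_i c_i \log^+|\alpha_i|$-type quantities controlled by the finite-height hypothesis.

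The main obstacle I anticipate is making the interchange of $\sum_{i,j}$ (or the double $\Delta$-integral) with $\int_Y$ rigorous when $\Delta$ has infinite support: one cannot naively apply Fubini–Tonelli because the integrand is not of one sign. The clean way around this is to first establish everything for finitely supported $\Delta$ (where Lemma~\ref{lemma:finite-Delta-have-finite-height} guarantees finite height and all sums are finite), obtaining a bound of the shape $|h_\rho(\Delta)| \le A \int_Y C\, d\mu + B \int_Y \log^+|z|_y\, d\Delta\, d\mu + (\text{const})$ with $A, B$ absolute; then approximate a general finite-height $\Delta$ by finite truncations $\Delta_N$ (renormalized), show the relevant integrals converge by monotone/dominated convergence using the finite-height bound as the dominating hypothesis, and conclude existence and finiteness of $h_\rho(\Delta)$ as a limit. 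The positivity fact $(\sigma - \tau, \sigma - \tau)_y \ge 0$ for measures with continuous potentials, quoted from \cite{FRL}, is useful for controlling signs along the way but must be applied carefully since $\Delta$ itself need not have a continuous potential.
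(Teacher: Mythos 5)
Your argument is essentially sound and reaches the right conclusion, but it handles the crucial self-energy term by a genuinely different route than the paper. The paper expands $(\rho_y-\Delta,\rho_y-\Delta)_y$ directly as $(\rho_y,\rho_y)_y-2(\rho_y,\Delta)_y+(\Delta,\Delta)_y$, controls the first two terms exactly as you do (via Lemma~\ref{lemma:p-is-g-plus-log} and Lemma~\ref{lemma:pairing-terms-bdd}, with everything dominated by multiples of $C(y)\in L^1(Y)$), and then disposes of the self-energy by invoking Lemma~\ref{lemma:discrs-int-to-0}: after adding the $L^1$ correctors $\chi_\infty\log 2+f+g$ to make the integrand nonnegative, Tonelli plus the product formula show that $\int_Y(\Delta,\Delta)_y\,d\mu(y)$ is exactly $0$. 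You instead split the kernel as $-\log|z-w|_y=\log^+\frac{1}{|z-w|_y}-\log^+|z-w|_y$, apply Tonelli separately to each (nonnegative) part, and bound the singular part by $\sum_{i\ne j}c_ic_j\,h(\alpha_i-\alpha_j)\le 2\sum_i c_i h(\alpha_i)+\log 2$, which the finite-height hypothesis controls; this proves absolute integrability of $(\Delta,\Delta)_y$ over $Y$, which is all Proposition~\ref{prop:rho-height-exists-well-defined} needs, whereas the paper's lemma gives the stronger vanishing statement that is reused later (e.g.\ in Proposition~\ref{prop:Delta-finite-Weil-height} and Proposition~\ref{prop:height-average}). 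Two small remarks: the Fubini--Tonelli worry in your last paragraph is already resolved by your own sign-splitting, so the finite-support truncation and renormalization detour is unnecessary (and, if pursued, the convergence $h_\rho(\Delta_N)\to h_\rho(\Delta)$ would itself require justification); and $(\lambda_y,\lambda_y)_y$ is in fact exactly $0$ at every place, archimedean included, since the potential of $\lambda_y$ is $\log^+|z|_y$, which vanishes on the support of $\lambda_y$.
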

Before proving Proposition \ref{prop:rho-height-exists-well-defined}, we need some auxiliary results, which we will use later.
\begin{lemma}\label{lemma:p-is-g-plus-log}
 Suppose $y$ is a place of $\Qbar$ and $g : \sP^1_y \ra \bR$ is a continuous function such that $g(\infty)=0$ and $\Delta g = \rho_y - \lambda_y$. Then the potential function 
 \begin{equation}\label{eqn:l1-3}
  p_{\rho_y}(z) = \int_{\sP^1_y} \log\,\abs{z-w}_y \,d\rho_y(w) 
 \end{equation}
 exists and is given by
 \[
  p_{\rho_y}(z) = g(z) + \log^+\,\abs{z}_y.
 \]
\end{lemma}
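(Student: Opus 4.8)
The statement to prove is Lemma \ref{lemma:p-is-g-plus-log}: for a place $y$ of $\Qbar$ and a continuous function $g$ on $\sP^1_y$ with $g(\infty) = 0$ and $\Delta g = \rho_y - \lambda_y$, the potential $p_{\rho_y}(z) = \int_{\sP^1_y} \log\abs{z-w}_y\,d\rho_y(w)$ exists and equals $g(z) + \log^+\abs{z}_y$.

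The plan is to recognize that $\log^+\abs{z}_y$ is, up to sign, precisely the potential of the standard measure $\lambda_y$. Concretely, I would first verify the identity
\[
 \int_{\sP^1_y} \log\abs{z-w}_y \,d\lambda_y(w) = \log^+\abs{z}_y
\]
in both cases: when $y$ is archimedean this is the classical Jensen-type computation $\int_0^{2\pi} \log\abs{z - e^{i\theta}}\,\frac{d\theta}{2\pi} = \log^+\abs{z}$; when $y$ is non-archimedean, $\lambda_y = \delta_{\zeta_{0,1}}$ is the point mass at the Gauss point, and by the definition of the Hsia kernel one has $\log\abs{z - \zeta_{0,1}}_y = \log\,\delta(z,\zeta_{0,1})_\infty = \log^+\abs{z}_y$ for $z \in \sA^1_y$ (and the value at $\infty$ is handled by the projective normalization). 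So in both cases the potential $p_{\lambda_y}(z) = \log^+\abs{z}_y$ exists and is a genuine (real-valued, continuous away from where needed) function.

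Next, I would use the hypothesis $\Delta g = \rho_y - \lambda_y$ together with the fact that the potential operator inverts the Laplacian: both $g + p_{\lambda_y}$ and $p_{\rho_y}$ (once its existence is established) are potentials for $\rho_y$ in the sense that applying $\Delta$ to either yields $\rho_y - (\text{something harmonic})$. More precisely, since $\Delta p_{\lambda_y} = \lambda_y - \delta_\infty$ and $\Delta p_{\rho_y} = \rho_y - \delta_\infty$ (the potential of a probability measure on $\sP^1_y$ has Laplacian equal to that measure minus the point mass at the normalization point $\infty$), we get $\Delta(p_{\rho_y} - g - p_{\lambda_y}) = (\rho_y - \delta_\infty) - (\rho_y - \lambda_y) - (\lambda_y - \delta_\infty) = 0$. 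Hence $p_{\rho_y} - g - p_{\lambda_y}$ is harmonic on $\sP^1_y$, so constant; evaluating at $z = \infty$, where $g(\infty) = 0$, $p_{\lambda_y}(\infty) = \log^+\abs{\infty}_y$ is handled by the projective convention, and $p_{\rho_y}(\infty)$ matches, forces the constant to be $0$. This gives $p_{\rho_y}(z) = g(z) + \log^+\abs{z}_y$.

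The main obstacle is establishing existence and finiteness of $p_{\rho_y}(z)$ itself — the integral $\int \log\abs{z-w}_y\,d\rho_y(w)$ could a priori diverge to $-\infty$ if $\rho_y$ has too much mass near $z$. The cleanest route around this is not to prove existence directly but to run the argument in the other direction: since $g$ is continuous and $p_{\lambda_y} = \log^+\abs{\cdot}_y$ is finite everywhere on $\sP^1_y$ (including, by convention, at $\infty$), the function $g + p_{\lambda_y}$ is a finite, continuous candidate potential; one then shows it genuinely equals $\int \log\abs{z-w}_y\,d\rho_y(w)$ by a Fubini/approximation argument — approximate $\rho_y$ weakly by discrete measures $\rho_y^{(k)}$ with continuous potentials converging uniformly (possible because $\rho_y$ has a continuous potential, hence continuous potential theory on the Berkovich line applies, cf. \cite[\S 4-5]{FRL} or \cite[\S 5]{BakerRumelyBook}), note the identity holds for each $\rho_y^{(k)}$ by the harmonicity argument above, and pass to the limit using continuity of $z \mapsto \log\abs{z-w}_y$ in the Berkovich topology together with uniform control of the potentials. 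Care is needed at the non-archimedean places to use the correct (Berkovich-extended) reading of $\abs{z-w}_y$ as the Hsia kernel, exactly as flagged in the text preceding the lemma, so that the integrand is continuous on all of $\sA^1_y \times \sA^1_y$ rather than merely off the classical diagonal; this is what makes the limiting argument and the evaluation $p_{\lambda_y}(z) = \log^+\abs{z}_y$ clean.
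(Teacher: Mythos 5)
Your main identity argument --- computing $\Delta p_{\rho_y} = \rho_y - \delta_\infty$ and $\Delta \log^+\abs{z}_y = \lambda_y - \delta_\infty$, concluding that $p_{\rho_y} - g - \log^+\abs{\cdot}_y$ is harmonic on $\sP^1_y$ and hence constant, and then pinning down the constant using the behaviour at $\infty$ --- is exactly the paper's proof. One small caveat: the paper phrases the last step as considering the asymptotics as $z\ra\infty$, not as literal evaluation at $\infty$; both $p_{\rho_y}$ and $\log^+\abs{\cdot}_y$ blow up there, so your appeal to a ``projective convention'' should be replaced by the asymptotic comparison $p_{\rho_y}(z)-\log\abs{z}_y\ra 0$. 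Your identification of $\log^+\abs{z}_y$ as the potential of $\lambda_y$ (Jensen at archimedean places, the Hsia kernel at the Gauss point otherwise) is correct and consistent with the paper.

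The genuine gap is in your treatment of the existence of $p_{\rho_y}$. The paper disposes of this by citing Lemmas 2.4 and 4.3 of \cite{FRL}, which assert that a measure admitting a continuous (hence bounded) potential has a well-defined finite potential function. Your substitute --- approximating $\rho_y$ weakly by ``discrete measures $\rho_y^{(k)}$ with continuous potentials converging uniformly'' --- cannot work as stated: at an archimedean place a measure charging a classical point never admits a continuous potential (the potential of a point mass has a logarithmic pole), so no such discrete approximants exist; you would have to use regularized measures in the spirit of Definition \ref{defn:ep-reg-of-measure}, and even then you would need their potentials to converge uniformly to $g$, which is not automatic. Moreover, since the kernel $\log\abs{z-w}_y$ is unbounded below, weak convergence $\rho_y^{(k)}\ra\rho_y$ together with convergence of the approximate identities does not by itself yield $\int\log\abs{z-w}_y\,d\rho_y^{(k)}(w)\ra\int\log\abs{z-w}_y\,d\rho_y(w)$; one needs a monotone truncation of the kernel plus the uniform control coming from the continuous potential, and that is precisely the content of the FRL lemmas you would be re-proving. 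The cleanest repair is to invoke those lemmas for existence, as the paper does, and retain your harmonicity argument for the identity.
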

\begin{proof}
 Existence of the potential $p_{\rho_y}$ follows from Lemmas 2.4 and 4.3 of \cite{FRL} in the archimedean and non-archimedean cases, respectively. To see the equality, observe that 
 \[
 \Delta p_{\rho_y}(z) = \rho_y - \delta_\infty
 \quad\text{and}\quad \Delta \log^+\,\abs{z}_y = \lambda_y - \delta_\infty,  
 \]
 where $\delta_\infty$ denotes the point mass at $\infty$. Thus $\Delta(p_{\rho_y}(z) - g(z) - \log^+\,\abs{z}_y) = 0$, and so the function $p_{\rho_y}(z) - g(z)-\log^+\,\abs{z}_y$ is harmonic on all of $\sP^1_y$, but this means it is constant. By our choice of normalization $g(\infty)=0$, it follows by considering the asymptotics as $z\ra\infty$ that $p_{\rho_y}(z) = g(z) + \log^+ \abs{z}_y$, as claimed.
\end{proof}

\begin{lemma}\label{lemma:lower-bds-on-standard-pairings}
 Let $\Delta$ be a discrete probability measure on $\Qbar$. Then for every finite place $y$ of $\Qbar$, we have
 \[
  (\lambda_y - \Delta, \lambda_y - \Delta)_y \geq 0.
 \]
  For every infinite place $y$ of $\Qbar$, we have
 \[
  (\lambda_y - \Delta, \lambda_y - \Delta)_y \geq -\log 2.
 \]
\end{lemma}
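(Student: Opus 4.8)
The plan is to expand the energy pairing into three bilinear pieces and bound each. (The standard positivity $(\rho_y-\sigma_y,\rho_y-\sigma_y)_y\ge0$ for measures with continuous potentials cannot be invoked directly, since the atomic measure $\Delta$ has no continuous potential, so we estimate by hand.) Write $\Delta=\sum_i c_i\,\delta_{\alpha_i}$ with the $\alpha_i\in\Qbar$ distinct and $\sum_i c_i\le1$ (if $\Delta$ has mass at $\infty\in\bP^1(\Qbar)$ it does not contribute, since the pairing integrates only over $\sA^1_y\times\sA^1_y$), and put $H:=\int_{\sA^1_y}\log^+\abs z_y\,d\Delta(z)\ge0$, the local integrand of Definition~\ref{defn:Delta-height-bounded}. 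By symmetry of the kernel,
\[
 (\lambda_y-\Delta,\lambda_y-\Delta)_y=(\lambda_y,\lambda_y)_y-2(\lambda_y,\Delta)_y+(\Delta,\Delta)_y .
\]

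I would first evaluate the two terms involving $\lambda_y$ exactly, via the identity $\int_{\sA^1_y}-\log\abs{z-w}_y\,d\lambda_y(z)=-\log^+\abs w_y$, valid for all $w\in\sA^1_y$: at a non-archimedean $y$ this is the value $-\log\,\delta(\zeta_{0,1},w)_\infty=-\log\max(1,\abs w_y)$ of the Hsia kernel based at the Gauss point, and at an archimedean $y$ it is Jensen's formula $\tfrac1{2\pi}\int_0^{2\pi}\log\abs{e^{i\theta}-w}\,d\theta=\log^+\abs w$. Integrating against $\Delta$ gives $(\lambda_y,\Delta)_y=-H$. Integrating against $\lambda_y$ gives $(\lambda_y,\lambda_y)_y=0$; here one uses that the excluded diagonal $\Diag_y$ is $\lambda_y\times\lambda_y$-null when $y\mid\infty$ and consists of classical points only when $y\nmid\infty$ (so it does not delete the atom of $\lambda_y\times\lambda_y$ at the Gauss point, and indeed $(\delta_{\zeta_{0,1}},\delta_{\zeta_{0,1}})_y=-\log\,\delta(\zeta_{0,1},\zeta_{0,1})_\infty=-\log1=0$).

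Next I would bound $(\Delta,\Delta)_y$ using the ultrametric/triangle inequality. For classical $z,w$ one has $\abs{z-w}_y\le\max(1,\abs z_y)\max(1,\abs w_y)$ when $y\nmid\infty$ and $\abs{z-w}_y\le2\max(1,\abs z_y)\max(1,\abs w_y)$ when $y\mid\infty$, so $-\log\abs{\alpha_i-\alpha_j}_y\ge-\log^+\abs{\alpha_i}_y-\log^+\abs{\alpha_j}_y-c$ with $c=0$ (resp.\ $c=\log2$). Summing over $i\ne j$ and using symmetry together with $\sum_{j\ne i}c_j\le1$, $c_i\log^+\abs{\alpha_i}_y\ge0$, and $\sum_{i\ne j}c_ic_j\le(\sum_i c_i)^2\le1$,
\[
 (\Delta,\Delta)_y=\sum_{i\ne j}c_ic_j\bigl(-\log\abs{\alpha_i-\alpha_j}_y\bigr)\ge-2\sum_i c_i\log^+\abs{\alpha_i}_y-c\sum_{i\ne j}c_ic_j\ge-2H-c .
\]
Substituting into the expansion, $(\lambda_y-\Delta,\lambda_y-\Delta)_y=2H+(\Delta,\Delta)_y\ge-c$, which is $\ge0$ for finite $y$ and $\ge-\log2$ for infinite $y$.

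The only genuine subtlety I anticipate is justifying the bilinear expansion when $\Delta$ has infinite support, because $H$ and $(\Delta,\Delta)_y$ may then be infinite and one must avoid an $\infty-\infty$. I would handle this by first recording the inequality for finitely supported $\Delta$, where all the sums above are finite and the argument is unconditional, and then noting that the same kernel estimate bounds the negative part of $-\log\abs{z-w}_y$ by $\log^+\abs z_y+\log^+\abs w_y+c$, whose integral against $(\lambda_y+\Delta)^{\otimes2}$ is at most $4H+4c$; together with finiteness of $(\lambda_y,\Delta)_y=-H$, this shows that whenever $H<\infty$ the pairing is a well-defined element of $(-\infty,+\infty]$, that the only term that can equal $+\infty$ is $(\Delta,\Delta)_y$, and hence that the expansion and the bound hold verbatim. (The case $H=\infty$ is not needed, since it does not occur at $\mu$-almost every place when $\Delta$ has finite height, the setting of Proposition~\ref{prop:rho-height-exists-well-defined}.)
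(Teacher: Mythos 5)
Your main estimate is exactly the paper's: the same bilinear expansion, the same exact evaluations $(\lambda_y,\lambda_y)_y=0$ and $(\lambda_y,\Delta)_y=-\int\log^+\abs{z}_y\,d\Delta$ (Jensen at archimedean places, the Hsia kernel at the Gauss point otherwise), and the same kernel inequality $\log^+\abs{z}_y+\log^+\abs{w}_y-\log\abs{z-w}_y\ge-\chi_\infty(y)\log 2$. The only methodological difference is bookkeeping: you bound $(\Delta,\Delta)_y\ge-2H-c$ on its own and then add $2H$, whereas the paper first regroups $-2(\lambda_y,\Delta)_y+(\Delta,\Delta)_y$ into a single double sum whose $(i,j)$ term is $c_ic_j\bigl(\log^+\abs{\alpha_i}_y+\log^+\abs{\alpha_j}_y-\log\abs{\alpha_i-\alpha_j}_y\bigr)$, plus the nonnegative diagonal terms $2c_i^2\log^+\abs{\alpha_i}_y$, and only then applies the kernel bound termwise.

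The gap is your final paragraph. The lemma carries no finiteness hypothesis, and your decomposition genuinely needs $H<\infty$: when $H=\infty$ the bound $(\Delta,\Delta)_y\ge-2H-c$ is vacuous and $2H+(\Delta,\Delta)_y$ is an $\infty-\infty$. Your reason for discarding that case --- that it is only needed in the setting of Proposition~\ref{prop:rho-height-exists-well-defined}, where finite height forces $H(y)<\infty$ at $\mu$-a.e.\ place --- does not match how the lemma is actually used: Lemma~\ref{lemma:integrability-of-delta-heights} and Proposition~\ref{prop:height-average} invoke it for arbitrary discrete probability measures with no height hypothesis (the remark after Proposition~\ref{prop:height-average} stresses precisely this), and in the converse direction of Proposition~\ref{prop:Delta-finite-Weil-height} the pointwise lower bound is the input to Tonelli that \emph{proves} local finiteness, so it cannot be presupposed. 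The repair is the paper's regrouping, which is already implicit in your own estimates: since each grouped off-diagonal term is $\ge -c_ic_j\,\chi_\infty(y)\log 2$ and the diagonal terms are nonnegative, the regrouped expression is well defined in $[-\chi_\infty(y)\log 2,+\infty]$ whether or not $H$ is finite, and the stated bound follows at every place with no case distinction.
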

\begin{proof}
 Write
 \[
  \Delta = \sum_{i=1}^\infty t_i \delta_{\alpha_i},
 \]
 where $\alpha_i$ are distinct points of $\Qbar$, which, under the embedding given by our choice of place $y$ we now think of as points of $\bP^1(\bC_y)$. Since we assume $\Delta$ is a probability measure, we have $t_i\geq 0$ for each $i$, and $\sum_i t_i = 1$. By the linearity of the pairing,
 \[
  (\lambda_y - \Delta, \lambda_y - \Delta)_y = (\lambda_y,\lambda_y)_y - 2(\lambda_y, \Delta) + (\Delta,\Delta)_y
 \]
 provided the terms on the right-hand side exist. We will show that the terms on the right side must be a sum of nonnegative terms, which is enough to prove our result. First note that $(\lambda_y,\lambda_y)_y = 0$, so we can ignore that term. The second term is 
 \[
  - 2(\lambda_y, \Delta) = 2\sum_{i=1}^\infty t_i \log^+\,\abs{\alpha_i}_y,
 \]
 and 
 \[
  (\Delta,\Delta)_y = -\sum_{i\neq j} t_i t_j \log\,\abs{\alpha_i - \alpha_j}_y. 
 \]
 Using $\sum_i t_i= 1$, we write
 \[
  2\sum_{i=1}^\infty t_i \log^+\,\abs{\alpha_i}_y = 2\sum_{i=1}^\infty t_i \bigg(\sum_{j=1}^\infty t_j\bigg) \log^+\,\abs{\alpha_i}_y = 2\sum_{i=1}^\infty\sum_{j=1}^\infty t_i t_j \log^+\,\abs{\alpha_i}_y.
 \]
 Now, for any $z,w\in \Qbar$, 
 \[
  \abs{z-w}_y \leq \begin{cases}
                    2 \max\{\abs{z}_y,\abs{w}_y\} &\text{if }y\mid \infty,\\
                    \max\{\abs{z}_y,\abs{w}_y\} &\text{if }y\nmid \infty,
                   \end{cases}
 \]
 so 
 \begin{equation*}
  -\log\, \abs{z-w}_y \geq \begin{cases}
                    -\log 2 - \log^+\,\abs{z}_y - \log^+\, \abs{w}_y &\text{if }y\mid \infty,\\
                    -\log^+\, \abs{z}_y - \log^+\, \abs{w}_y &\text{if }y\nmid \infty,
                   \end{cases}
 \end{equation*}
 or more simply, if we let $\chi_\infty(y)$ be the characteristic function of $Y(\bQ,\infty)$, 
 \begin{equation}\label{eqn:lower-bound-for-chordal}
  \log^+\,\abs{z}_y + \log^+\, \abs{w}_y -\log\, \abs{z-w}_y \geq -\chi_\infty(y)\log 2.
 \end{equation}
 It follows that 
\begin{align*}
  - 2(\lambda_y, \Delta) + (\Delta,\Delta)_y
    &= \sum_{i\neq j} t_it_j(\log^+\,\abs{\alpha_i}_y + \log^+\,\abs{\alpha_j}_y - \log\,\abs{\alpha_i-\alpha_j}_y)\\
    &\hspace{10mm} + \sum_{i=1}^\infty 2t_i^2 \log^+\,\abs{\alpha_i}_y\\
    &\geq - \sum_{i\neq j} t_it_j\chi_\infty(y)\log 2\\
    &\geq -\chi_\infty(y)\log 2,
 \end{align*}
 and the conclusion follows.
\end{proof}
\begin{rmk}
 We note that it is possible to improve the lower bound in the preceding lemma in the archimedean case; in particular, in the case where $\Delta$ is equally supported on a finite set, we could apply Mahler's inequality \cite{Mahler} and Baker's sweeping generalization of Mahler's result to lower bounds for averages of Green's functions \cite{BakerAverages}. However, these results are proven in a more limited context, and as we will not need a stronger result, we will not make any effort to improve the result here with those ideas.
\end{rmk}

We now prove a generalization of the product formula for the integral of discriminant-type terms over all places of $\Qbar$:
\begin{lemma}\label{lemma:discrs-int-to-0}
 Suppose that $\Gamma,\Delta$ are discrete probability measures on $\bP^1(\Qbar)$ which have finite height. Then 
 \[
  \int_Y (\Gamma,\Delta)_y \,d\mu(y) = 0.
 \]
\end{lemma}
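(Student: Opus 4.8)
The plan is to reduce the claim to the classical product formula applied to each difference $\beta_i-\alpha_j$ of support points, the only real analytic work being an $L^1(Y)$-estimate that licenses interchanging a possibly infinite sum with the integral over $Y$. Write $\Gamma=\sum_i s_i\delta_{\beta_i}$ and $\Delta=\sum_j t_j\delta_{\alpha_j}$ with the $\beta_i$ distinct and the $\alpha_j$ distinct. Since the pairing \eqref{eqn:local-pairing} integrates only over $\sA^1_y\times\sA^1_y$, any mass that $\Gamma$ or $\Delta$ carries at $\infty$ plays no role, so I discard it; afterwards I only know $\sum_i s_i\le1$ and $\sum_j t_j\le1$, which is all I will need. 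With the remaining support consisting of classical (type I) points of $\sA^1_y$, deleting the diagonal $\Diag_y$ just deletes the pairs with $\beta_i=\alpha_j$, so for every $y$,
\[
(\Gamma,\Delta)_y=-\sum_{\substack{i,j\\ \beta_i\neq\alpha_j}}s_it_j\log\abs{\beta_i-\alpha_j}_y .
\]

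For each pair with $\beta_i\neq\alpha_j$ put $N_{ij}=\beta_i-\alpha_j\in\Qbar^\times$. The product formula, in the form $\int_Y\log\abs{\gamma}_y\,d\mu(y)=h(\gamma)-h(\gamma^{-1})=0$ for $\gamma\in\Qbar^\times$ (using $h(\al)=\int_Y\log^+\abs{\al}_y\,d\mu$ from the excerpt together with $\log\abs{x}_y=\log^+\abs{x}_y-\log^+\abs{x^{-1}}_y$), gives $\int_Y\log\abs{N_{ij}}_y\,d\mu(y)=0$ and hence $\int_Y\bigl\lvert\log\abs{N_{ij}}_y\bigr\rvert\,d\mu(y)=2h(N_{ij})$. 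Using the subadditivity $h(\beta_i-\alpha_j)\le h(\beta_i)+h(\alpha_j)+\log2$ (equivalently, integrating the elementary bound $\log\abs{z-w}_y\le\log^+\abs{z}_y+\log^+\abs{w}_y+\chi_\infty(y)\log2$ over $Y$), I obtain
\[
\sum_{\substack{i,j\\ \beta_i\neq\alpha_j}}s_it_j\int_Y\bigl\lvert\log\abs{N_{ij}}_y\bigr\rvert\,d\mu(y)\ \le\ 2\Bigl(\sum_i s_ih(\beta_i)+\sum_j t_jh(\alpha_j)+\log2\Bigr),
\]
which is finite precisely because $\Gamma$ and $\Delta$ have finite height: by Tonelli, Definition~\ref{defn:Delta-height-bounded} says exactly that $\sum_i s_ih(\beta_i)<\infty$ and $\sum_j t_jh(\alpha_j)<\infty$.

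This absolute summability lets me apply Fubini--Tonelli to $(i,j,y)\mapsto s_it_j\log\abs{N_{ij}}_y$ on $\NN^2\times Y$ (counting measure times $\mu$): in particular $(\Gamma,\Delta)_y$ is an absolutely convergent sum for $\mu$-a.e.\ $y$ and is $\mu$-integrable, and
\[
\int_Y(\Gamma,\Delta)_y\,d\mu(y)=-\sum_{\substack{i,j\\ \beta_i\neq\alpha_j}}s_it_j\int_Y\log\abs{N_{ij}}_y\,d\mu(y)=0 .
\]
The one delicate point is exactly this interchange: for fixed $y$ the terms $\log\abs{\beta_i-\alpha_j}_y$ have no sign and no lower bound uniform in $(i,j)$, so one cannot argue termwise and must pass through the $L^1(Y)$-norm estimate above — which is the one and only place where the finite-height hypothesis is used, and it cannot be dropped.
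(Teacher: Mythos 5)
Your proof is correct and follows the paper's strategy: expand the pairing as a double sum over pairs of support points, kill each term $\int_Y \log\abs{\beta_i-\alpha_j}_y\,d\mu(y)$ by the product formula, and spend the real effort on justifying the interchange of the (possibly infinite) sum with the integral over $Y$. The only difference is how that interchange is licensed: the paper adds the $L^1(Y)$ functions $\chi_\infty(y)\log 2+f(y)+g(y)$ to the integrand and uses the pointwise bound $\log^+\abs{z}_y+\log^+\abs{w}_y-\log\abs{z-w}_y\ge-\chi_\infty(y)\log 2$ to make it nonnegative before applying Tonelli, whereas you integrate that same bound over $Y$ first, obtaining $\int_Y\bigl\lvert\log\abs{\beta_i-\alpha_j}_y\bigr\rvert\,d\mu(y)=2h(\beta_i-\alpha_j)\le 2\bigl(h(\beta_i)+h(\alpha_j)+\log 2\bigr)$, and deduce absolute integrability on $\NN^2\times Y$ so that Fubini applies. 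Both justifications rest on the identical elementary inequality and on reading the finite-height hypothesis (via Tonelli) as $\sum_i s_i h(\beta_i)<\infty$ and $\sum_j t_j h(\alpha_j)<\infty$, so the arguments are essentially equivalent; your write-up has the minor additional merits of treating possible mass at $\infty$ explicitly (the paper silently takes the supports inside $\Qbar$) and of recording that $(\Gamma,\Delta)_y$ converges absolutely for $\mu$-almost every $y$, so the pairing exists a.e.
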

\begin{proof}
 The proof is essentially the product formula applied term by term; the main difficulty lies in justifying the interchange of the sum and the integral in order to apply the product formula to each individual difference of terms which appears. To do this, we will show that there exists a function in $L^1(Y)$ which we can add to the function $(\Gamma,\Delta)_y$ to make it nonnegative as a function of the place $y$. 
 
 We start by writing, as $\Gamma$ and $\Delta$ are discrete probability measures,
 \[
  \Gamma = \sum_{n=1}^\infty s_n \delta_{z_n}\quad\text{and}\quad \Delta = \sum_{n=1}^\infty t_n \delta_{w_n},
 \]
 where $z_n,w_n\in \Qbar$, $\delta_{z}$ denotes the Dirac point mass at $z\in\Qbar$, and $\sum_n s_n = \sum_n t_n = 1$. By the definition of the energy pairing,
 \[
  (\Gamma,\Delta)_y = \int_{\sA^1_y \times \sA^1_y\setminus \mathrm{Diag}_y} -\log\,\abs{z-w}_y\,d\Gamma(z)\,d\Delta(w) 
  = \sum_{\substack{m,n\\ z_m\neq w_n}} -s_m t_n\log\,\abs{z_m-w_n}_y,
 \]
 where $\mathrm{Diag}_y = \{ (z,z)\in \sA^1_y \times \sA^1_y : z\in \bC_y\}$ denotes the diagonal of classical points. (This makes no difference for an archimedean place $y$, but for the Berkovich line $\sA^1_y$ over a nonarchimedean place, we are only excluding the diagonal of Type I points.) By the assumption that $\Gamma$ and $\Delta$ have finite height, we have that
 \[
  \int_Y \sum_{n} s_n \log^+\,\abs{z_n}_y \,d\mu(y)<\infty\quad\text{and}\quad 
  \int_Y \sum_{n} t_n \log^+\,\abs{w_n}_y \,d\mu(y)<\infty. 
 \]
 Let $\chi_\infty$ be the characteristic function of $Y(\bQ,\infty)$. We define the functions
 \[
  f(y) = \sum_{n} s_n \log^+\,\abs{z_n}_y 
  \quad\text{and}\quad 
  g(y) = \sum_{n} t_n \log^+\,\abs{w_n}_y.
 \]
 Note that $f,g\geq 0$ and $f,g\in L^1(Y)$ by our assumption that $\Gamma$ and $\Delta$ have finite height. We also have $\chi_\infty \in L^1(Y)$, as $\chi_\infty$ is a constant function on its support, which has finite measure $\mu(Y(\bQ,\infty)) = 1$. Notice that 
 \begin{align*}
  \chi_\infty(y) \log 2 + f(y) + g(y) + (\Gamma,\Delta)_y \\
 &\hspace{-45mm}= \chi_\infty(y) \log 2 + \sum_{m} s_m \log^+\,\abs{z_m}_y \\
 &\hspace{-40mm}+ \sum_{n} t_n \log^+\,\abs{w_n}_y -\sum_{\substack{m,n\\ z_m\neq w_n}} s_m t_n\log\,\abs{z_m-w_n}_y\\
 &\hspace{-45mm}= \sum_{m,n} s_m t_n \chi_\infty(y) \log 2 + \sum_{m,n} s_m t_n \log^+\,\abs{z_m}_y \\
 &\hspace{-40mm}+ \sum_{m,n}s_m t_n \log^+\,\abs{w_n}_y - \sum_{\substack{m,n\\ z_m\neq w_n}} s_m t_n\log\,\abs{z_m-w_n}_y,
 \end{align*}
 where we have used the fact that $\sum_m s_m = \sum_n t_n = 1$ above to insert the extra copies of $s_m$ and $t_n$. Thus, applying Lemma \ref{lemma:lower-bds-on-standard-pairings}, we have
  \begin{align*}
  \chi_\infty(y) \log 2 + f(y) + g(y) + (\Gamma,\Delta)_y 
  \\
 &\mbox{}\hspace{-45mm}= \sum_{\substack{m,n\\ z_m = w_n}} s_m t_n \big( 2\log^+\,\abs{z_m}_y + \chi_\infty(y) \log 2\big)\\
  &\mbox{}\hspace{-40mm}+ \sum_{\substack{m,n\\ z_m\neq w_n}} s_m t_n\big( \chi_\infty(y) \log 2 + \log^+\,\abs{z_m}_y + \log^+\,\abs{w_n}_y - \log\,\abs{z_m-w_n}_y\big)\\
  &\mbox{}\hspace{-45mm}\geq 0,
 \end{align*}
 where nonnegativity is clear for the first term and follows from \eqref{eqn:lower-bound-for-chordal} for the second term. As we have added $L^1$-functions and the resulting function is nonnegative, and since our measure spaces are $\sigma$-finite, we can apply Tonelli's theorem to justify the interchange of summation and integration in the original integral we wished to compute: 
 \begin{align}
 \begin{split}
  \int_Y (\Gamma,\Delta)_y \,d\mu(y) &= \int_Y -\sum_{\substack{m,n\\ z_m\neq w_n}} s_m t_n \log\,\abs{z_m-w_n}_y \,d\mu(y)\\
  &= - \sum_{\substack{m,n\\ z_m\neq w_n}}s_m t_n \int_Y  \log\,\abs{z_m-w_n}_y \,d\mu(y).
 \end{split}
 \end{align}
Notice however that for every $\alpha \in \Qbar$, if we let $K = \bQ(\al)$, then 
 \[
  \int_Y \log\, \abs{\al}_y \,d\mu(y) = \sum_{v\in M_K} \mu(Y(K,v)) \log\, \abs{\al}_v = \sum_{v\in M_K} \frac{[K_v:\bQ_v]}{[K:\bQ]} \log\, \abs{\al}_v = 0
 \]
by the product formula. It follows that 
\[
 \int_Y  \log\,\abs{z_m-w_n}_y \,d\mu(y) = 0
\]
 for every $m,n$ with $z_m \neq w_n$, and our proof is complete. 
\end{proof}

Using Lemma \ref{lemma:discrs-int-to-0} we can now prove a result that justifies the use of the term `finite height' in Definition \ref{defn:Delta-height-bounded}:
\begin{prop}\label{prop:Delta-finite-Weil-height}
 Let $\Delta$ be a discrete probability measure on $\Qbar$. Then $\Delta$ has finite height in the sense of Definition \ref{defn:Delta-height-bounded} if and only if $h(\Delta)$ exists and is finite.
\end{prop}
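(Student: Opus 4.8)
The plan is to deduce the proposition from the sharper assertion that, writing $\Delta = \sum_i t_i \delta_{\alpha_i}$ with the $\alpha_i \in \Qbar$ distinct, $t_i \ge 0$ and $\sum_i t_i = 1$, one has the identity $h(\Delta) = \sum_i t_i\,h(\alpha_i)$ as an equality in $[0,\infty]$. Granting this, the proposition is immediate: by Tonelli's theorem (the integrand is nonnegative) together with the formula $\int_Y \log^+\abs{\alpha}_y\,d\mu(y) = h(\alpha)$ recalled in Section~\ref{sec:gen-adelic}, the quantity $\int_Y \int_{\sA^1_y} \log^+\abs{z}_y\,d\Delta(z)\,d\mu(y)$ appearing in Definition~\ref{defn:Delta-height-bounded} equals $\sum_i t_i\, h(\alpha_i)$; hence $\Delta$ has finite height if and only if $\sum_i t_i\, h(\alpha_i) < \infty$, if and only if $h(\Delta) < \infty$. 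Moreover $h(\Delta)$ is, in any case, a well-defined element of $[0,\infty]$ --- so that ``exists and is finite'' means simply ``is finite'' --- since Lemma~\ref{lemma:lower-bds-on-standard-pairings} shows $(\lambda_y - \Delta, \lambda_y - \Delta)_y$ is well defined and bounded below by $-\chi_\infty(y)\log 2$, an $L^1(Y)$ function, so the integral defining $h(\Delta)$ (the case $\rho_y = \lambda_y$ of \eqref{eqn:height-defn}) converges in $(-\tfrac{\log 2}{2},\infty]$, and the identity then shows it to be nonnegative.

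To prove the identity, I would begin from the term-by-term expression for the standard pairing obtained inside the proof of Lemma~\ref{lemma:lower-bds-on-standard-pairings}: since $(\lambda_y,\lambda_y)_y = 0$,
\[
 (\lambda_y - \Delta, \lambda_y - \Delta)_y = \sum_{i\neq j} t_i t_j\bigl(\log^+\abs{\alpha_i}_y + \log^+\abs{\alpha_j}_y - \log\abs{\alpha_i-\alpha_j}_y\bigr) + 2\sum_i t_i^2\,\log^+\abs{\alpha_i}_y .
\]
Adding $\chi_\infty(y)\log 2 = \sum_{i,j} t_i t_j\,\chi_\infty(y)\log 2$ to both sides and grouping diagonal and off-diagonal terms, inequality~\eqref{eqn:lower-bound-for-chordal} shows that every summand on the right is a nonnegative measurable function of $y$; hence Tonelli's theorem on the $\sigma$-finite space $(Y,\mu)$ lets us integrate term by term. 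Using $\int_Y \chi_\infty\,d\mu = \mu(Y(\bQ,\infty)) = 1$, the identity $\int_Y \log^+\abs{\alpha_i}_y\,d\mu(y) = h(\alpha_i)$, and --- for the cross terms --- the product formula in the form $\int_Y \log\abs{\alpha_i-\alpha_j}_y\,d\mu(y) = 0$ for $\alpha_i \neq \alpha_j$ (exactly the computation used in the proof of Lemma~\ref{lemma:discrs-int-to-0}), and then collapsing the double sums via $\sum_i t_i = 1$, the right-hand side integrates to $\log 2 + 2\sum_i t_i\, h(\alpha_i)$, while the left-hand side integrates to $2h(\Delta) + \log 2$. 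Cancelling $\log 2$ yields $h(\Delta) = \sum_i t_i\, h(\alpha_i)$.

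I expect the only genuine obstacle to be the bookkeeping that makes these manipulations rigorous when $\Delta$ has infinite support: one must carry the excluded classical diagonal $\Diag_y$ through the double integrals defining the pairings, verify that after the $L^1$ correction by $\chi_\infty(y)\log 2$ the integrand is honestly a sum of nonnegative measurable functions of $y$ (so that rearrangement and Tonelli are legitimate), and exclude any $\infty-\infty$ indeterminacy in the bilinear expansion of $(\lambda_y - \Delta, \lambda_y - \Delta)_y$ --- all of which is furnished by Lemma~\ref{lemma:lower-bds-on-standard-pairings}. Beyond this, the computation is precisely the product-formula argument already carried out in Lemma~\ref{lemma:discrs-int-to-0}, now run as an equality rather than an inequality.
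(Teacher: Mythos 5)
Your proposal is correct and runs on exactly the machinery of the paper's own proof: the term-by-term expansion and the $-\chi_\infty(y)\log 2$ lower bound from Lemma \ref{lemma:lower-bds-on-standard-pairings}, Tonelli's theorem over $(Y,\mu)$ against the index set, and the product formula annihilating the cross terms $\log\,\abs{\alpha_i-\alpha_j}_y$ --- this is precisely the computation carried out in the paper's converse direction and in Lemma \ref{lemma:discrs-int-to-0}. The only difference is organizational: you run the argument once as the unconditional identity $h(\Delta)=\sum_i t_i\,h(\alpha_i)$ in $[0,\infty]$ and read off both implications, whereas the paper treats the two directions separately, invoking Lemma \ref{lemma:discrs-int-to-0} (under the finite-height hypothesis) for the forward direction; as a small bonus your version yields the standard-height case of Proposition \ref{prop:height-average} directly.
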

\begin{proof}
 First, assume that $\Delta$ has finite height. Then by the bilinearity of the energy pairing, 
 \[
  h(\Delta) = \frac12 \int_Y (\Delta - \lambda_y, \Delta - \lambda_y)_y \,d\mu(y) = \frac12 \int_Y (\lambda_y, \lambda_y)_y - 2(\Delta,\lambda_y)_y + (\Delta,\Delta)_y \,d\mu(y),
 \]
 presuming these quantities exist. But $(\lambda_y, \lambda_y)_y= 0$ for all $y$, 
 \[
 - 2\int_Y (\Delta,\lambda_y)_y \,d\mu(y) = 2 \int_Y \int_{\sA^1_y} \log^+\,\abs{z}_y \,d\Delta(z)d\mu(y) < \infty
 \]
 by the assumption that $\Delta$ has finite height, and by Lemma \ref{lemma:discrs-int-to-0},
 \[
  \int_Y (\Delta,\Delta)_y\,d\mu(y) = 0.
 \]
  
 For the other direction, we now suppose that $h(\Delta)<\infty$. If $\Delta$ is finitely supported, the result is trivial, so assume that $\Delta$ has countably infinite support. Write 
 \[
  \Delta = \sum_{i=1}^\infty t_i \delta_{\alpha_i}. 
 \]
 Then by Lemma \ref{lemma:lower-bds-on-standard-pairings}, we have
 \[
  (\Delta - \lambda_y, \Delta - \lambda_y)_y \geq -\chi_\infty(y)\log 2,
 \]
 where $\chi_\infty$ is the characteristic function of $Y(\bQ,\infty)$.
 Since $\chi_\infty\in L^1(Y)$, Tonelli's theorem applies to the integral defining the height $h(\Delta)$, and we can exchange the order of integration, and in particular, as we noted in the proof of Lemma \ref{lemma:discrs-int-to-0}, the product formula applied to the discriminant-type term yields
 \[
  \int_Y \log\,\abs{\alpha_i-\alpha_j}_y\,d\mu(y) = 0.
 \]
 It follows, since $h(\Delta)<\infty$, that
 \begin{equation}\label{eqn:finite-std-height-is-log-plus-terms}
  h(\Delta) = \frac12 \int_Y (\Delta - \lambda_y, \Delta - \lambda_y)_y\,d\mu(y) = \int_Y\int_{\sA^1_y}\log^+\,\abs{z}_y\,d\Delta(z)\,d\mu(y)< \infty,
 \end{equation}
 and so $\Delta$ meets the finite height criterion.
\end{proof}
It will follow from Proposition \ref{prop:Delta-finite-Weil-height} and Theorem \ref{thm:gen-h-is-weil-nonneg} below, which shows that for every generalized adelic measure $\rho$, $h_\rho = h + O(1)$ with big-$O$ constant depending only on $\rho$, that the result of Proposition \ref{prop:Delta-finite-Weil-height} is in fact true for all heights $h_\rho$ associated to a generalized adelic measure.

\begin{lemma}\label{lemma:pairing-terms-bdd}
 Suppose $y$ is a place of $\Qbar$ and $\rho_y$ is a Borel probability measure on $\sP^1_y$ for which there exists a continuous potential $g_y : \sP^1_y \ra \bR$ such that $g_y(\infty) = 0$ and that, with respect to the standard measure $\lambda_y$, we have $\Delta g_y = \rho_y - \lambda_y$. Then
 \[
 -2 \sup_{z\in \sP^1_y} \abs{g_y(z)} \leq (\rho_y,\rho_y)_y \leq 4 \sup_{z\in \sP^1_y} \abs{g_y(z)}.
 \]
 In particular, for an adelic measure $\rho$, $\abs{(\rho_y,\rho_y)_y} \leq 4 C(y)$ for $\mu$-almost every $y\in Y$.
\end{lemma}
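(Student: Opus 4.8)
The plan is to use bilinearity of the energy pairing together with the identity $(\lambda_y,\lambda_y)_y=0$ recorded above to write
\[
 (\rho_y,\rho_y)_y = (\rho_y-\lambda_y,\,\rho_y-\lambda_y)_y + 2(\lambda_y,\rho_y)_y,
\]
and then to bound each of the two terms by a multiple of $\sup_{z}\abs{g_y(z)}$, which is finite since $g_y$ is continuous on the compact space $\sP^1_y$.

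For the first term, I would observe that by Lemma~\ref{lemma:p-is-g-plus-log} the potential of $\rho_y$ is $p_{\rho_y}=g_y+\log^+\abs{\cdot}_y$, and (applying the same lemma with $g\equiv 0$) the potential of $\lambda_y$ is $p_{\lambda_y}=\log^+\abs{\cdot}_y$; hence the potential of the signed measure $\rho_y-\lambda_y$ is exactly $g_y$, so that $(\rho_y-\lambda_y,\rho_y-\lambda_y)_y=-\int g_y\,d(\rho_y-\lambda_y)$. Since $\rho_y-\lambda_y$ has total variation at most $2$, this is bounded in absolute value by $2\sup_z\abs{g_y(z)}$; combined with the nonnegativity recalled above ($(\mu_1-\mu_2,\mu_1-\mu_2)_y\geq 0$ whenever $\mu_1,\mu_2$ admit continuous potentials, applied with $\mu_1=\rho_y$ and $\mu_2=\lambda_y$), this gives $0\leq(\rho_y-\lambda_y,\rho_y-\lambda_y)_y\leq 2\sup_z\abs{g_y(z)}$. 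For the second term, integrating the potential of $\rho_y$ against $\lambda_y$ gives
\[
 (\lambda_y,\rho_y)_y = -\int_{\sP^1_y}p_{\rho_y}\,d\lambda_y = -\int g_y\,d\lambda_y - \int\log^+\abs{z}_y\,d\lambda_y = -\int g_y\,d\lambda_y,
\]
because $\int\log^+\abs{z}_y\,d\lambda_y=0$ in both cases ($\lambda_y$ is arc length on the unit circle when $y\mid\infty$, and $\lambda_y=\delta_{\zeta_{0,1}}$ with $\abs{\zeta_{0,1}}_y=1$ when $y\nmid\infty$); since $\lambda_y$ is a probability measure, $\abs{(\lambda_y,\rho_y)_y}\leq\sup_z\abs{g_y(z)}$. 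Combining the two estimates gives $-2\sup_z\abs{g_y(z)}\leq(\rho_y,\rho_y)_y\leq 4\sup_z\abs{g_y(z)}$. The final assertion then follows because $C(y)=\sup_z\abs{g_y(z)}$ for $\mu$-almost every $y$ by Definition~\ref{defn:gen-adelic-measure}(\ref{enum:gen-adelic-2}).

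The main obstacle is not the arithmetic of these bounds but the measure-theoretic justification of the two displayed evaluations of pairings as integrals of potentials. Two points must be addressed. First, $\rho_y$ (and $\lambda_y$) carries no mass at classical, i.e.\ type~I, points: an atom at a classical point $a$ would force $p_{\rho_y}$ to have a logarithmic pole at $a$, contradicting that $g_y=p_{\rho_y}-\log^+\abs{\cdot}_y$ is finite and continuous. Consequently the diagonal $\Diag_y$ excluded in \eqref{eqn:local-pairing} is null for the relevant product measures, so the pairings coincide with the unrestricted double integrals and Fubini's theorem applies. Second, one must know those integrals are finite, and the only delicate quantity is $\int\log^+\abs{z}_y\,d\rho_y$; I would bound it by truncation, writing $\log^+\abs{z}_y=\lim_{R\to\infty}\min(\log^+\abs{z}_y,\log R)$ and applying monotone convergence, using that each $\min(\log^+\abs{z}_y,\log R)$ is a bounded continuous function on $\sP^1_y$ whose Laplacian is a difference of two probability measures, so that self-adjointness of $\Delta$ against $g_y$ bounds $\int\min(\log^+\abs{z}_y,\log R)\,d\rho_y$ by $2\sup_z\abs{g_y(z)}$ uniformly in $R$. (Alternatively, one can invoke that a measure with a continuous potential has finite energy and that $(\cdot,\cdot)_y$ restricts to a finite symmetric bilinear form on differences of such measures, as in \cite[\S\S 2, 4]{FRL}.) Once finiteness is in hand, the rest is the elementary estimation above.
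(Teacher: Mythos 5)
Your proposal is correct and follows essentially the same route as the paper's proof: expand $(\rho_y,\rho_y)_y$ by bilinearity into $(\rho_y-\lambda_y,\rho_y-\lambda_y)_y$ plus the mixed term, use the nonnegativity and the identity $(\rho_y-\lambda_y,\rho_y-\lambda_y)_y=\int -g_y\,d(\rho_y-\lambda_y)\leq 2\sup_z\abs{g_y(z)}$ from \cite[Prop.~2.6, 4.5]{FRL}, and bound the mixed term by $\sup_z\abs{g_y(z)}$ via $p_{\rho_y}=g_y+\log^+\abs{\cdot}_y$ and $\int\log^+\abs{z}_y\,d\lambda_y=0$. The measure-theoretic justifications you supply by hand (no atoms at classical points, finiteness of the integrals) are handled in the paper by citing Lemmes 2.4--2.5 and 4.3--4.4 of \cite{FRL}, so your extra care is harmless but not a different argument.
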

\begin{proof}%[Proof of Lemma \ref{lemma:pairing-terms-bdd}]
 Let
 \[
  C(y) = \sup_{z\in \sP^1_y} \abs{g_y(z)},
 \]
 as in the definition of an adelic measure. If follows from \cite[Prop. 2.6]{FRL} when $y$ is archimedean and \cite[Prop 4.5]{FRL} when $y$ is non-archimedean that
 \[
  0 \leq (\rho_y - \lambda_y, \rho_y - \lambda_y)_y = \int_{\sP^1_y} -g_y(z) \, d(\rho_y -\lambda_y)(z). 
 \]
 It follows that
 \begin{equation}\label{eqn:lemma1}
  0 \leq (\rho_y - \lambda_y, \rho_y - \lambda_y)_y \leq \abs{\rho_y - \lambda_y}(\sP^1_y) \cdot \sup_{z\in \sP^1_y} \abs{g_y(z)} \leq 2 C(y).
 \end{equation}
 By the linearity and symmetry of the energy pairing, 
 \[
  (\rho_y - \lambda_y, \rho_y - \lambda_y)_y = (\rho_y,\rho_y)_y - 2 (\rho_y,\lambda_y)_y + (\lambda_y, \lambda_y)_y.
 \]
 %It is easy to see that 
 As before, we have $(\lambda_y, \lambda_y)_y= 0$. If we can bound the mixed term $(\rho_y,\lambda_y)_y$, our result will follow. By Lemma \ref{lemma:p-is-g-plus-log}, the potential function 
 \[
  p_{\rho_y}(z) = \int_{\sP^1_y} \log\,\abs{z-w}\,d\rho_y(w)
 \]
 of $\rho_y$ is $p_{\rho_y}(z) = g_y(z) + \log^+\,\abs{z}_y$. 
 We now apply Lemmas 2.4 and 2.5 of \cite{FRL} to say that 
 \begin{equation}\label{eqn:lemma2}
  \abs{(\rho_y , \lambda_y)} \leq \int_{\sP^1_y} \abs{p_{\rho_y}(z)} \,d\lambda_y(z) \leq \int_{\sP^1_y} (\log^+\,\abs{z}_y + C(y)) \,d\lambda_y(z) = C(y).
 \end{equation}
 Combining equations \eqref{eqn:lemma1} and \eqref{eqn:lemma2} the bounds follow.
\end{proof}

We are now ready to prove Proposition \ref{prop:rho-height-exists-well-defined}, which states the $h_\rho(\Delta)$ exists and is finite for every generalized adelic measure $\rho$ and every discrete probability measure $\Delta$ on $\bP^1(\Qbar)$ which has finite height in the sense of Definition \ref{defn:Delta-height-bounded}.
\begin{proof}[Proof of Proposition \ref{prop:rho-height-exists-well-defined}]
 By the linearity of the energy pairing, write:
\begin{equation}\label{eqn:h-breakdown-into-3-terms}
 \begin{split}
 h_\rho(\Delta) &= \frac12 \int_Y (\rho_y - \Delta, \rho_y - \Delta)_y\,d\mu(y)\\
  &= \frac12\int_Y (\rho_y,\rho_y) - 2(\Delta, \rho_y)  +(\Delta, \Delta)_y\,d\mu(y).
  %&= \frac12\sum_{p\in M_\bQ} \int_{Y(\bQ,p)} (\rho_y - \Delta, \rho_y - \Delta)_y\,d\mu(y)
 \end{split}
 \end{equation}
 We will prove first that each of the three integrands in \eqref{eqn:h-breakdown-into-3-terms} above is an integrable function of the place $y$, and then that each of the three terms above is bounded above and below by an integrable function on $Y$, so that the integral exists, as claimed. 
  
We start with the first term. Measurability of $(\rho_y,\rho_y)$ as a function of $y$ follows from the measurability of the function $g : Y(\bQ,p)\times \sP^1(\bC_p)\ra\bR$ for every rational prime $p$. To see that the first term has finite integral, we use Lemma \ref{lemma:pairing-terms-bdd} to say that 
 \[
  \bigg| \frac12 \int_Y (\rho_y,\rho_y)_y \,d\mu(y)\bigg| \leq \frac12 \int_Y 4 C(y) \,d\mu(y) <\infty
 \]
 by our assumption that $C(y)\in L^1(Y)$. 

Consider now the second term in the integral,
\[
 \int_Y -(\rho_y,\Delta)_y \,d\mu(y) = \int_Y \int_{\sA^1_y} p_{\rho_y}(z) \,d\Delta(z)\,d\mu(y),
\]
where $p_{\rho_y}$ denotes the potential function of $\rho_y$ given by
\[
 p_{\rho_y}(z) = \int_{\sA^1_y} \log\abs{z-w}_y \,d\rho_y(w).
\]
Since each point in the support of $\Delta$ is a single algebraic number, its $y$-adic absolute value defines a locally constant function of $y$, so measurability is trivial. Here we make use of the assumption that $\Delta$ has finite height, which gives
\[
 0 \leq \int_Y \int_{\sA^1_y} \log^+\, \abs{z}_y \,d\Delta(z)\,d\mu(y)<\infty, 
\]
and now we need to bound the integral with $p_{\rho_y}(z)$ in place of $\log^+\,\abs{z}_y$. By Lemma \ref{lemma:p-is-g-plus-log}, we know that 
\[
 p_{\rho_y}(z) = g_y(z) + \log^+\,\abs{z}_y,
\]
so the difference between the two integrals is bounded by 
\begin{equation}\label{eqn:bound-bw-p-and-log-plus-on-Delta}
 \int_Y \int_{\sA^1_y} \abs{g_y(z)} \,d\Delta(z)\,d\mu(y) \leq \int_Y \sup_{z\in\sP^1_y} \abs{g_y(z)}\,d\mu(y) = \int_Y C(y)\, d\mu(y)<\infty
\end{equation}
 as $C(y)\in L^1(Y)$.
 
 The third term in the integrand, namely
\[
 \frac12\int_Y (\Delta,\Delta)_y\,d\mu(y),
\]
 is equal to $0$ by Lemma \ref{lemma:discrs-int-to-0}, since $\Delta$ is a discrete probability measure of finite height. We have shown that all three terms are integrable, so we are done.
\end{proof}

We note that, in the case that $\rho$ is defined over a single number field $K/\bQ$, then we have the usual Galois invariance of the height:
\begin{prop}\label{prop:galois-invariance-of-height}
 Suppose that $\rho$ is defined over a single number field $K/\bQ$. Then the height $h_\rho$ is $\Gal(\overline{K}/K)$-invariant in the sense that if we define $\sigma\Delta$ by linearly extending the action $\sigma(\delta_\al) = \delta_{\sigma\al}$ for all $\al\in\Qbar$, then 
 \begin{equation}
  h_\rho(\sigma\Delta) = h_\rho(\Delta) \quad\text{for all}\quad\sigma\in \Gal(\overline{K}/K).
 \end{equation}
 In particular, if $[\al]_K$ denotes the probability measure on $\Qbar$ equally distributed on each $K$-Galois conjugate of $\al$, then
 \begin{equation}
  h_\rho(\al) = h_\rho([\al]_K).
 \end{equation}
\end{prop}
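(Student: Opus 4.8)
The plan is to collapse the difference $h_\rho(\sigma\Delta)-h_\rho(\Delta)$ onto a single ``mixed'' integral $\int_Y(\Delta,\rho_y)_y\,d\mu(y)$ and then to annihilate it by a measure-preserving change of variables on $Y$ coming from the Galois action. First I would fix the action of $G_\QQ := \Gal(\overline{\QQ}/\QQ)$ on $Y$: for $\sigma\in G_\QQ$ and $y\in Y$ with $y\mid p$, let $\sigma y$ be the place determined by $\abs{\beta}_{\sigma y} = \abs{\sigma^{-1}\beta}_y$ for all $\beta\in\overline{\QQ}$, i.e.\ $\iota_{\sigma y} = \iota_y\circ\sigma^{-1}$ in terms of the fixed embeddings $\iota_y\colon\overline{\QQ}\hookrightarrow\CC_p$, so that $\iota_y(\sigma\beta) = \iota_{\sigma^{-1}y}(\beta)$. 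This is the action of \cite[\S4]{AV}, and by loc.\ cit.\ it preserves the measure $\mu$ on $Y$ (each $Y(\QQ,p)$ being a homogeneous $G_\QQ$-space carrying normalized Haar measure). The essential input from ``$\rho$ defined over $K$'' is that, for $\sigma\in G_K$, one has $\rho_{\sigma y} = \rho_y$ as Borel measures on $\sP^1(\CC_p)$, hence also $p_{\rho_{\sigma y}} = p_{\rho_y}$ for the potential of Lemma~\ref{lemma:p-is-g-plus-log}. Indeed $\sigma\in G_K$ fixes the restriction of $y$ to $K$, so $\sigma y$ and $y$ lie over the same place of $K$ and thus $g_{\sigma y} = g_y$; since the standard measure and the Laplacian are intrinsic to $\sP^1(\CC_p)$, $\rho_{\sigma y} = \Delta g_{\sigma y} + \lambda_{\sigma y} = \Delta g_y + \lambda_y = \rho_y$.

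Next I would carry out the reduction. Since the Weil height of each point is Galois-invariant, $\sigma\Delta$ has finite height iff $\Delta$ does, so by Proposition~\ref{prop:rho-height-exists-well-defined} both heights exist (if $\Delta$ lacks finite height both sides are $+\infty$ and there is nothing to prove). Expanding \eqref{eqn:height-defn} by bilinearity of the energy pairing, using Lemma~\ref{lemma:discrs-int-to-0} to kill $\int_Y(\Delta,\Delta)_y\,d\mu = \int_Y(\sigma\Delta,\sigma\Delta)_y\,d\mu = 0$, and observing that $\int_Y(\rho_y,\rho_y)_y\,d\mu$ involves neither $\Delta$ nor $\sigma\Delta$, one is left with
\[
 h_\rho(\sigma\Delta) - h_\rho(\Delta) = \int_Y(\Delta,\rho_y)_y\,d\mu(y) - \int_Y(\sigma\Delta,\rho_y)_y\,d\mu(y).
\]
Writing $\Delta = \sum_i t_i\delta_{\alpha_i}$ we have $(\sigma\Delta,\rho_y)_y = -\sum_i t_i\,p_{\rho_y}(\iota_y(\sigma\alpha_i)) = -\sum_i t_i\,p_{\rho_y}(\iota_{\sigma^{-1}y}(\alpha_i))$ (excluding the type-I diagonal is harmless since $\rho_y$ charges no classical point), so substituting $y = \sigma y'$ and using invariance of $\mu$ together with $\rho_{\sigma y'} = \rho_{y'}$ turns this integral into $-\int_Y\sum_i t_i\,p_{\rho_{y'}}(\iota_{y'}(\alpha_i))\,d\mu(y') = \int_Y(\Delta,\rho_{y'})_{y'}\,d\mu(y')$. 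Hence the two mixed integrals agree and $h_\rho(\sigma\Delta) = h_\rho(\Delta)$.

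For the last assertion I would write $[\alpha]_K = \frac1n\sum_{j=1}^n\delta_{\sigma_j\alpha}$ with $\sigma_j\in G_K$ and $n = \#(G_K\cdot\alpha)$, expand $h_\rho([\alpha]_K)$ as above (its self-pairing term vanishing by Lemma~\ref{lemma:discrs-int-to-0}), and apply the change of variables term by term to get $\int_Y([\alpha]_K,\rho_y)_y\,d\mu = \frac1n\sum_j\int_Y(\sigma_j\delta_\alpha,\rho_y)_y\,d\mu = \int_Y(\delta_\alpha,\rho_y)_y\,d\mu$; since $(\delta_\alpha,\delta_\alpha)_y = 0$ identically, this gives $h_\rho([\alpha]_K) = \tfrac12\int_Y(\rho_y,\rho_y)_y\,d\mu - \int_Y(\delta_\alpha,\rho_y)_y\,d\mu = h_\rho(\delta_\alpha) = h_\rho(\alpha)$.

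The main obstacle I expect is bookkeeping rather than analysis: making the $G_\QQ$-action on $Y$ simultaneously compatible with the fixed embeddings $\iota_y$, with restriction of places to $K$, and with the $G_\QQ$-invariance of $\mu$ in \cite{AV}, and making sure ``$\rho$ defined over $K$'' really forces the equality of measures $\rho_{\sigma y} = \rho_y$ (not merely of the potentials up to an identification). Once that is pinned down the reduction to the mixed term is immediate, and all questions of integrability and of interchanging sum and integral have already been handled in Proposition~\ref{prop:rho-height-exists-well-defined} and Lemma~\ref{lemma:discrs-int-to-0}.
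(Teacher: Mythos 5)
Your proposal is correct and follows essentially the same route as the paper: expand by bilinearity, kill the self-pairing terms with Lemma~\ref{lemma:discrs-int-to-0}, and reduce to the mixed term $\int_Y(\sigma\Delta,\rho_y)_y\,d\mu(y)$, which is handled via the identity $p_{\rho_y}(\sigma\alpha_i)=p_{\rho_{\sigma^{-1}y}}(\alpha_i)$ together with the $\Gal$-invariance of $\mu$ and the fact that $\rho$ defined over $K$ forces $\rho_{\sigma y}=\rho_y$ for $\sigma\in\Gal(\overline{K}/K)$. You are in fact a bit more explicit than the paper about the measure-preserving change of variables $y\mapsto\sigma y$, about why excluding the type-I diagonal is harmless, and about deducing the statement for $[\al]_K$, all of which the paper leaves implicit.
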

\noindent We remind the reader that the action of the absolute Galois group on the space of places $Y$ is defined by $\abs{\alpha}_{\sigma y} = \abs{\sigma^{-1}(\alpha)}_y$, so that the action on $Y$ is compatible with the action on $\Qbar$; we use the inverse to ensure that
\[
    \abs{\alpha}_{\tau\sigma y} =     \abs{\tau^{-1}(\alpha)}_{\sigma y} = \abs{\sigma^{-1}\tau^{-1}(\alpha)}_y = \abs{(\tau\sigma)^{-1}(\alpha)}_y.
\]

\begin{proof}
By the linearity of the energy pairing, we have
\begin{equation*}
 \begin{split}
 h_\rho(\sigma\Delta) &= \frac12 \int_Y (\rho_y - \sigma\Delta, \rho_y - \sigma\Delta)_y\,d\mu(y)\\
  &= \frac12\int_Y (\rho_y,\rho_y) - 2(\sigma\Delta, \rho_y)  +(\sigma\Delta, \sigma\Delta)_y\,d\mu(y).
 \end{split}
 \end{equation*}
By Lemma \ref{lemma:discrs-int-to-0},
$\int_Y (\sigma\Delta, \sigma\Delta)_y d\mu(y) = 0$ so it suffices to show that 
\begin{equation*}
    \int_Y (\sigma\Delta, \rho_y)_y d\mu(y) = \int_Y (\Delta, \rho_y)_y d\mu(y).
\end{equation*}
Writing $\Delta = \sum\limits_{i} t_i\delta_{\alpha_i}$ where $\sum\limits_i t_i = 1$, and using the fact that $\rho$ is defined over $K$, we have 
\begin{align*}
\int_Y (\sigma\Delta, \rho_y)_y d\mu(y)&= \int_{Y_K} \int_{\sA^1\times \sA^1 \setminus \Diag_y} -\log |z-w|_y d \sigma\Delta (z) d\rho_y (w)d\mu (y)\\
&= \int_{Y_K} \int_{\sA^1_y} -p_{\rho_y}(z) \sigma\Delta (z)d\mu (y)\\
&= \int_{Y_K}\sum\limits_{i} -t_i\,  p_{\rho_y}(\sigma \alpha_i)d\mu (y).
%&= \sum\limits_{i} t_i\int_{Y_K} p_{\rho_y}(\sigma z_i)d\mu (y). \\
\end{align*}
Since $p_{\rho_y}(\sigma \alpha_i) = p_{\rho_{\sigma^{-1}y}}(\alpha_i)$, then we have that 
\begin{equation*}
 \begin{split}
    \int_Y (\sigma\Delta, \rho_y)_y d\mu(y) &= \int_{Y_K} \sum\limits_{i} t_i p_{\rho_y}(\alpha_i)d\mu (y)\\
    &= \int_Y (\Delta, \rho_y)_y d\mu(y),
 \end{split}
 \end{equation*}
as desired. 
\end{proof}

We conclude this section with a lemma which will be needed to justify appplications of Tonelli's theorem below:
\begin{lemma}\label{lemma:integrability-of-delta-heights}
 Let $\Delta$ be a discrete probability measure on $\Qbar$ and $\rho$ a generalized adelic measure. Let $C(y)$ be the associated local bound on the Green's function associated to $\rho$ at the place $y$. Then 
 \[
  (\rho_y-\Delta,\rho_y-\Delta)_y \geq - 4C(y) - \chi_\infty(y) \log 2,
 \]
 where $\chi_\infty(y)$ denotes the characteristic function of $Y(\bQ,\infty)$.
\end{lemma}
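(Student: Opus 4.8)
The plan is to split $\rho_y-\Delta=(\rho_y-\lambda_y)+(\lambda_y-\Delta)$ and expand the energy pairing bilinearly, since both pieces are already under control. First I would dispose of the bad set of places: by the definition of a generalized adelic measure, $C(y)=\infty$ on the $\mu$-measure-zero set of places $y$ where $g_y$ fails to be continuous, so the asserted inequality is vacuous there. From now on I fix a place $y$ in the full-measure set on which $g_y\colon\sP^1_y\to\bR$ is continuous with $g_y(\infty)=0$, $\Delta g_y=\rho_y-\lambda_y$, and $C(y)=\sup_{z}\abs{g_y(z)}<\infty$.

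Next I would record the decomposition
\[
 (\rho_y-\Delta,\rho_y-\Delta)_y = (\rho_y-\lambda_y,\rho_y-\lambda_y)_y + 2(\rho_y-\lambda_y,\lambda_y-\Delta)_y + (\lambda_y-\Delta,\lambda_y-\Delta)_y .
\]
The first term lies in $[0,2C(y)]$: nonnegativity is \cite[Prop. 2.6, Prop. 4.5]{FRL}, and the upper bound is exactly \eqref{eqn:lemma1} from the proof of Lemma~\ref{lemma:pairing-terms-bdd}. The third term is $\geq -\chi_\infty(y)\log 2$ by Lemma~\ref{lemma:lower-bds-on-standard-pairings} (and, by the computation in its proof, it is well-defined in $[-\chi_\infty(y)\log 2,+\infty]$, being a sum of nonnegative terms up to the additive constant $-\chi_\infty(y)\log 2$). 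Because the Type-I diagonal excluded from every pairing appearing here is the same one — it comes only from the classical atoms of $-\Delta$, while $\rho_y-\lambda_y$ has none — and because the two pairings involving the bounded-potential signed measure $\rho_y-\lambda_y$ converge absolutely, the bilinear expansion is legitimate and no $\infty-\infty$ arises even if $(\lambda_y-\Delta,\lambda_y-\Delta)_y=+\infty$. So it remains only to bound the cross term from below.

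For the cross term I would identify the potential of $\rho_y-\lambda_y$. Applying Lemma~\ref{lemma:p-is-g-plus-log} to $\rho_y$ and, in the trivial case $g\equiv 0$, to $\lambda_y$, the potential of $\rho_y-\lambda_y$ is $(g_y+\log^+\abs{\cdot}_y)-\log^+\abs{\cdot}_y=g_y$. Writing the pairing through this potential (Fubini, justified by boundedness of $g_y$ and compact support of $\lambda_y-\Delta$),
\[
 (\rho_y-\lambda_y,\lambda_y-\Delta)_y = \int_{\sP^1_y}-g_y(w)\,d(\lambda_y-\Delta)(w) = \int g_y\,d\Delta - \int g_y\,d\lambda_y ,
\]
so $\abs{(\rho_y-\lambda_y,\lambda_y-\Delta)_y}\leq \int\abs{g_y}\,d\Delta+\int\abs{g_y}\,d\lambda_y\leq 2C(y)$ since $\Delta$ and $\lambda_y$ are probability measures. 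Hence $2(\rho_y-\lambda_y,\lambda_y-\Delta)_y\geq -4C(y)$, and summing the three estimates gives $(\rho_y-\Delta,\rho_y-\Delta)_y\geq -4C(y)-\chi_\infty(y)\log 2$, which is the claim (trivially true on the exceptional set where $C(y)=\infty$).

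The step I expect to require the most care is the bookkeeping in the bilinear expansion: one must check that rearranging the possibly-infinite term $(\lambda_y-\Delta,\lambda_y-\Delta)_y$ is valid, which relies on the genuine finiteness of the two pairings involving $\rho_y-\lambda_y$ (each controlled by $C(y)$) and on $\rho_y-\lambda_y$ carrying no classical atoms, so that the excluded Type-I diagonal is consistently the one coming from $\Delta$. A cleaner-looking alternative, parallel to the proof of Proposition~\ref{prop:rho-height-exists-well-defined}, is to expand instead into $(\rho_y,\rho_y)_y-2(\Delta,\rho_y)_y+(\Delta,\Delta)_y$ and combine the last two terms via the inequality $(\Delta,\Delta)_y\geq 2(\lambda_y,\Delta)_y-\chi_\infty(y)\log 2$ extracted from the proof of Lemma~\ref{lemma:lower-bds-on-standard-pairings}; the $\pm 2\int\log^+\abs{z}_y\,d\Delta(z)$ contributions cancel, and using $(\rho_y,\rho_y)_y\geq -2C(y)$ from Lemma~\ref{lemma:pairing-terms-bdd} one lands on the same bound.
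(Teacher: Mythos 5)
Your proof is correct and follows essentially the same route as the paper: the paper groups the expansion as $(\rho_y-\Delta,\rho_y-\Delta)_y-(\lambda_y-\Delta,\lambda_y-\Delta)_y=(\rho_y,\rho_y)_y-2(\rho_y-\lambda_y,\Delta)_y$ and bounds the right side by $-4C(y)$ via Lemma~\ref{lemma:pairing-terms-bdd} and the identity $-(\rho_y-\lambda_y,\Delta)_y=\int g_y\,d\Delta$, then invokes Lemma~\ref{lemma:lower-bds-on-standard-pairings}, which is the same circle of ideas (and the same constant) as your regrouped decomposition through $(\rho_y-\lambda_y)+(\lambda_y-\Delta)$; indeed your closing ``alternative'' is precisely the paper's argument.
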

\begin{proof}
 Our goal will be to give a lower bound for
 \begin{equation}\label{eqn:24-main}
  (\rho_y-\Delta,\rho_y-\Delta)_y - (\lambda_y-\Delta,\lambda_y-\Delta)_y = (\rho_y,\rho_y)_y - 2 (\rho_y-\lambda_y,\Delta)_y
 \end{equation}
 and use Lemma \ref{lemma:lower-bds-on-standard-pairings} to bound the pairing of $\Delta$ with $\lambda_y$. Now, Lemma \ref{lemma:pairing-terms-bdd} gives that 
 \begin{equation}\label{eqn:24-inf-height}
  (\rho_y,\rho_y)_y \geq - 2C(y),
 \end{equation}
and by Lemmas 2.5 and 4.4 of \cite{FRL},
 \begin{equation}
 - 2(\rho_y-\lambda_y,\Delta)_y = 2 \int_{\sA^1_y} g_y(z)\,d\Delta(z),
 \end{equation}
 so
 \begin{equation}\label{eqn:24-bound-on-mixed}
 \abs{- 2 (\rho_y-\lambda_y,\Delta)_y}\leq 2\sup_{z\in\sA^1_y} \abs{g_y(z)} = 2 C(y).
 \end{equation}
 Now, Lemma \ref{lemma:lower-bds-on-standard-pairings} yields
 \[
  (\lambda_y-\Delta,\lambda_y-\Delta)_y \geq -\chi_\infty(y)\log 2,
 \]
 so combining this with the results of \eqref{eqn:24-inf-height} and \eqref{eqn:24-bound-on-mixed} in equation \eqref{eqn:24-main} gives the result.
\end{proof}

\section{Equidistribution for generalized adelic measures}\label{sec:equi-for-gen-adelic}

\subsection{Heights associated to generalized adelic measures are Weil heights}
We now extend \cite[Th\'eor\`eme 1]{FRL} to generalized adelic measures:
\begin{thm}\label{thm:gen-h-is-weil-nonneg}
 Let $\rho$ be a generalized adelic measure. Then:
 \begin{enumerate}
  \item \label{item:h-rho-is-weil}$h_\rho(\Delta) = h(\Delta) + O(1)$ for every discrete probability measure $\Delta$ on $\bP^1(\Qbar)$ with finite height, where the big-$O$ constant is independent of $\Delta$ and depends only on $\rho$, and 
 \item \label{item:h-rho-nonneg}$h_\rho$ is essentially nonnegative, in the sense that for any $\ep>0$, the set \[\{\al\in \bP^1(\Qbar) : h_\rho(\al) < -\ep \}\] is finite.
 \end{enumerate}
\end{thm}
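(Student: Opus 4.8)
For part (1), the comparison follows from bilinearity of the energy pairing. Both $h_\rho(\Delta)$ and $h(\Delta)$ are finite by Propositions~\ref{prop:rho-height-exists-well-defined} and~\ref{prop:Delta-finite-Weil-height}, and each of the integrands $(\rho_y-\Delta,\rho_y-\Delta)_y$ and $(\lambda_y-\Delta,\lambda_y-\Delta)_y$ lies in $L^1(Y)$ --- each is bounded below by the $L^1$ function furnished by Lemma~\ref{lemma:integrability-of-delta-heights} (respectively Lemma~\ref{lemma:lower-bds-on-standard-pairings}) and has finite integral --- so one may subtract under the integral sign. Expanding both pairings bilinearly, the $(\Delta,\Delta)_y$ terms are identical and cancel, leaving
\[
 h_\rho(\Delta)-h(\Delta)=\tfrac12\int_Y\bigl((\rho_y,\rho_y)_y-2(\rho_y-\lambda_y,\Delta)_y\bigr)\,d\mu(y).
\]
By Lemma~\ref{lemma:pairing-terms-bdd}, $|(\rho_y,\rho_y)_y|\le4C(y)$; and since $(\rho_y-\lambda_y,\Delta)_y=-\int g_y\,d\Delta$ by Lemma~\ref{lemma:p-is-g-plus-log}, one has $|(\rho_y-\lambda_y,\Delta)_y|\le C(y)$. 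Hence $|h_\rho(\Delta)-h(\Delta)|\le 3\int_Y C\,d\mu<\infty$, a bound depending only on $\rho$.

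For part (2), applying (1) to $\Delta=\delta_\al$ and using $h(\al)\ge0$ shows $h_\rho(\al)\ge -3\int_Y C\,d\mu$, so $h_\rho$ is bounded below. Suppose for contradiction that $\{\al\in\bP^1(\Qbar):h_\rho(\al)<-\ep\}$ is infinite, and fix distinct $\al_1,\al_2,\dots$ in it. By (1) the Weil heights $h(\al_n)$ are bounded, so after passing to a subsequence Northcott's theorem gives $[\bQ(\al_n):\bQ]\to\infty$; it then suffices to prove $\liminf_n h_\rho(\al_n)\ge0$.

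I would first do this when $\rho$ is defined over a number field $K$, adapting the compactness argument of Favre--Rivera-Letelier. Then only finitely many places $v$ of $K$ have $\rho_v\ne\lambda_v$, and by Proposition~\ref{prop:galois-invariance-of-height}
\[
 h_\rho(\al_n)=h_\rho([\al_n]_K)=\tfrac12\sum_{v\in M_K}\tfrac{[K_v:\bQ_v]}{[K:\bQ]}\bigl(\rho_v-[\al_n]_{K,v},\,\rho_v-[\al_n]_{K,v}\bigr)_v,
\]
where $[\al_n]_{K,v}$ is the conjugate-orbit measure of $\al_n$ on $\sP^1_v$. Passing to a further subsequence, one has $[\al_n]_{K,v}\to\sigma_v$ weakly at each of the finitely many archimedean and bad places (weak-$*$ compactness). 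Using continuity of $p_{\rho_v}$ on $\sP^1_v$ together with boundedness of $h(\al_n)$ --- which keeps mass from escaping to $\infty$ --- to pass the mixed terms to the limit, the lower semicontinuity of the logarithmic energy (with the classical diagonal excluded) under weak limits of measures whose atoms shrink to zero --- guaranteed by $[K(\al_n):K]\to\infty$ --- to control the self-energy terms, and the product formula of Lemma~\ref{lemma:discrs-int-to-0} at the remaining good places, the bilinear identities (via $p_{\rho_v-\lambda_v}=g_v$) rearrange the liminf into the lower bound $\tfrac12\sum_v\tfrac{[K_v:\bQ_v]}{[K:\bQ]}(\rho_v-\sigma_v,\rho_v-\sigma_v)_v$. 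The same semicontinuity, with Lemma~\ref{lemma:discrs-int-to-0} making the approximants' energies integrable over $v$, shows that each $\sigma_v$ has finite logarithmic energy; hence $(\rho_v-\sigma_v,\rho_v-\sigma_v)_v\ge0$ by the energy-minimization principle for mass-zero measures of finite energy (the finite-energy form of \cite[Prop.~2.6, Prop.~4.5]{FRL}; see also \cite{BakerRumelyBook}), giving $\liminf_n h_\rho(\al_n)\ge0$, the desired contradiction.

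For a general generalized adelic measure I would then deduce essential positivity by approximation: replace $\rho$ by the adelic measure $\rho^{(K)}$ over a number field $K$ whose potential is the conditional average of $g_y$ over the fibre of $Y\to M_K$ through $y$ at places lying over a finite set of primes, and is $0$ (i.e.\ $\rho^{(K)}_y=\lambda_y$) elsewhere. The Part~(1) estimate together with $C\in L^1(Y)$ make $\sup_{\al}|h_{\rho^{(K)}}(\al)-h_\rho(\al)|$ as small as desired as $K$ grows and the finite set of primes is enlarged, so that $\{h_\rho<-\ep\}\subseteq\{h_{\rho^{(K)}}<-\ep/2\}$ is finite for suitable $K$, and essential positivity for $\rho^{(K)}$ passes to $\rho$. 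The main obstacle is controlling this limit passage in full generality: when the field of definition has infinite degree there can be infinitely many places with $\rho_y\ne\lambda_y$, so the weak-convergence, Fatou, and energy-semicontinuity estimates must be carried out simultaneously over all fibres $Y(\bQ,p)$ using only the integrability of $C$ and $\sup_n h(\al_n)<\infty$; the approximation must be made uniform in the base point (which rests on the behaviour of the potentials $g_y$ in the $\sP^1$-variable); and one must ensure the limiting measures really have finite energy so that the energy principle applies. None of this arises when $\rho$ is already defined over a single number field, where the argument is exactly the classical one.
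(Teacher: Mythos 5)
Part (1) of your proposal is correct and is essentially the paper's argument: expand bilinearly, cancel the $(\Delta,\Delta)_y$ terms, and bound the remaining two terms by $2C(y)$ and $C(y)$ via Lemmas \ref{lemma:pairing-terms-bdd} and \ref{lemma:p-is-g-plus-log}, giving the same constant $3\int_Y C\,d\mu$.

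Part (2) has a genuine gap, and it sits exactly where the theorem has content. Your argument is complete (modulo standard details) only when $\rho$ is defined over a number field; for a general generalized adelic measure you propose to approximate $\rho$ by adelic measures $\rho^{(K)}$ whose potentials are conditional averages of $g_y$ over the fibres of $Y\to M_K$, and you need $\sup_{\al}\bigl|h_{\rho^{(K)}}(\al)-h_\rho(\al)\bigr|\to 0$. That uniform estimate requires (i) that the averaged potentials actually define adelic measures (interchanging the Laplacian with the fibre average when $C(y)$ is only $L^1$ on the fibre), (ii) that $\int_Y\sup_{z}\bigl|g_y(z)-g^{(K)}_y(z)\bigr|\,d\mu(y)\to 0$ as $K$ grows, which is a $C(\sP^1(\bC_p))$-valued martingale convergence statement, and (iii) convergence of the self-energy terms $(\rho^{(K)}_y,\rho^{(K)}_y)_y\to(\rho_y,\rho_y)_y$ in $L^1(Y)$. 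None of these is established, and your closing paragraph concedes as much; as written, the general case is a program rather than a proof. There is also an inessential but real amount of handwaving in the number-field case itself (mass escaping to $\infty$, semicontinuity of the energy off the classical diagonal, finiteness of the energy of the limit measures $\sigma_v$), which is precisely what the regularization machinery of \cite{FRL} is designed to handle.

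The paper's proof of (2) avoids all of this and uses no Northcott, no Galois orbits, and no base field: assuming infinitely many points $\al_n$ with $h_\rho(\al_n)<-\ep$, form the averaged measures $\Delta_N=\frac1N\sum_{n=1}^N\delta_{\al_n}$. Since these are finitely supported, the product formula (as in Lemma \ref{lemma:discrs-int-to-0}) gives $h_\rho(\Delta_N)=\frac1N\sum_n h_\rho(\al_n)<-\ep$. The maximal atom of $\Delta_N$ is $1/N\to 0$, so Propositions 2.8 and 4.9 of \cite{FRL} apply at every place and give $\liminf_N(\rho_y-\Delta_N,\rho_y-\Delta_N)_y\ge 0$, with no hypothesis that the $\al_n$ have growing degree. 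Finally, Lemma \ref{lemma:integrability-of-delta-heights} provides the uniform $L^1$ minorant $-4C(y)-\chi_\infty(y)\log 2$, so Fatou's lemma yields $0\le\liminf_N h_\rho(\Delta_N)\le-\ep$, a contradiction. Note that the key analytic input you invoke (lower bounds on energies of measures with shrinking atoms) is exactly this ingredient; applied directly to $\Delta_N$ it closes your argument immediately and uniformly over all places, so the detour through Northcott, weak-$*$ limits $\sigma_v$, and the $\rho^{(K)}$ approximation is both unnecessary and the source of the gap.
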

\begin{proof}
 We start with the proof of the first statement. This is quite similar in spirit to the proof of boundedness in Proposition \ref{prop:rho-height-exists-well-defined} above. We observe that:
 \[
  h_\rho(\Delta) = \frac12 \int_Y (\rho_y -\Delta, \rho_y- \Delta)_y \,d\mu(y)
 \]
 and 
 \[
  h(\Delta) = \frac12 \int_Y (\lambda_y - \Delta, \lambda_y - \Delta)_y \,d\mu(y).
 \]
 Further, by Proposition \ref{prop:Delta-finite-Weil-height}, $h(\Delta)$ exists and is finite by our assumption on $\Delta$. Since $(\lambda_y,\lambda_y)_y = 0$ at every place,
 \begin{equation}\label{eqn:height-bound}
  \abs{ h_\rho(\Delta) - h(\Delta)} \leq \bigg| \frac12 \int_Y (\rho_y, \rho_y)_y\,d\mu(y) \bigg| + \bigg| \int_Y (\rho_y-\lambda_y, \Delta)_y\,d\mu(y) \bigg |.
 \end{equation}
 Now, by our assumption that $\rho$ is a generalized adelic measure, we know that there exists a measurable function $g : Y(\bQ,p) \times \sP^1(\bC_p) \ra \bR$ at every rational prime $p$ such that, if $g_y(z) = g(y,z)$, then for $\mu$-almost all $y$, we have that $g_y$ is continuous and $\Delta g_y(z) = \rho_y - \lambda_y$, and further, the function
 \[
  C(y) = \sup_{z\in \sP^1_y} \abs{g_y(z)}
 \]
 satisfies $C\in L^1(Y)$. Now, by Lemmas 2.5 and 4.4 of \cite{FRL}, 
 \[
  (\rho_y-\lambda_y, \Delta)_y = \int_{\sA^1_y} g_y(z)\,d\Delta(z),
 \]
 so it follows that $\abs{(\rho_y-\lambda_y, \Delta)_y}\leq C(y)$, independent of $\Delta$. It follows that $h = h_\rho + O(1)$, and indeed, using \eqref{eqn:height-bound} and applying Lemma \ref{lemma:pairing-terms-bdd} to bound the first term in \eqref{eqn:height-bound}, we can explicitly compute a big-$O$ constant, which is independent of $\Delta$:
 \begin{equation}\label{eqn:explicit-height-bound}
  \abs{ h_\rho(\Delta) - h(\Delta)} \leq 3 \int_Y C(y)\,d\mu(y).
 \end{equation}
 In order to prove our height function is essentially nonnegative, assume for the sake of contradiction that, for some $\ep>0$, the set 
 \[
  \{\al\in\Qbar : h_\rho(\al) < -\ep \}
 \]
 is instead infinite. Let $(\al_n)_{n=1}^\infty$ be sequence of distinct elements in this set and let $\Delta_N$ be the sequence of discrete probability measures on $\Qbar$ given by
 \[
  \Delta_N = \frac1N \sum_{n=1}^N \delta_{\al_n}.
 \]
 As the sums involved in computing the height are finite, it follows from applying the product formula to cancel the discriminant terms that
 \[
  h_\rho(\Delta_N) = \frac1N \sum_{n=1}^N h_\rho(\al_n),
 \]
  and so $h_\rho(\Delta_N) < -\ep$ as well. However, for every place $y$ of $\Qbar$, it follows from applying Propositions 2.8 and 4.9 of \cite{FRL} with $\ep = 1/N$ that 
 \begin{equation}\label{eqn:degree-to-inf-means-nonneg-pairing}
  \liminf_{N\ra\infty} (\rho_y - \Delta_N, \rho_y - \Delta_N)_y \geq 0.
 \end{equation}
 Now notice that by Lemma \ref{lemma:integrability-of-delta-heights}, the functions
 \[
  f_n(y) = (\rho_y - \Delta_N, \rho_y - \Delta_N)_y 
 \]
 are almost nonnegative in the sense that $f_n(y) \geq - 4C(y) + \chi_\infty(y)\log 2$, where $\chi_\infty$ denotes the characteristic function of $Y(\bQ,\infty)$. As $C$ and $\chi_\infty$ are both in $L^1(Y)$, it follows that Fatou's lemma applies to the function $f_n(y)$ and we can say that
 \begin{equation}
 \begin{split}
  0 &\leq \frac12 \int_Y \liminf_{N\ra \infty} (\rho_y - \Delta_N, \rho_y - \Delta_N)_y \,d\mu(y)\\
  &\leq \liminf_{N\ra \infty} \frac12 \int_Y (\rho_y - \Delta_N, \rho_y - \Delta_N)_y \,d\mu(y)\\
  &= \liminf_{N\ra\infty} h_\rho(\Delta_N) \leq -\ep < 0,
 \end{split}
 \end{equation}
which is a contradiction. It follows that our height is essentially nonnegative. 
\end{proof}
We can now state the corollary of Proposition \ref{prop:Delta-finite-Weil-height} above, which we stated would follow from proving Theorem \ref{thm:gen-h-is-weil-nonneg}:
\begin{cor}\label{cor:finite-height-is-same-for-all-heights}
 Let $\Delta$ be a discrete probability measure on $\bP^1(\Qbar)$ and $\rho$ be a generalized adelic measure. Then $\Delta$ has finite height in the sense of Definition \ref{defn:Delta-height-bounded} if and only if $h_\rho(\Delta)$ exists and is finite.
\end{cor}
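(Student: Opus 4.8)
The corollary follows by combining Proposition \ref{prop:Delta-finite-Weil-height}, which characterizes finite height for the standard Weil height, with the $L^1$-comparison estimates already assembled for generalized adelic measures. One implication is immediate: if $\Delta$ has finite height in the sense of Definition \ref{defn:Delta-height-bounded}, then Proposition \ref{prop:rho-height-exists-well-defined} asserts that $h_\rho(\Delta)$ exists and is finite, so nothing more is needed.

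For the reverse implication the plan is as follows. Suppose $h_\rho(\Delta)$ exists and is finite, and write $F(y) = (\rho_y - \Delta, \rho_y - \Delta)_y$. First I would invoke Lemma \ref{lemma:integrability-of-delta-heights} to bound $F(y) \geq -4C(y) - \chi_\infty(y)\log 2$ for $\mu$-almost every $y$; since $C \in L^1(Y)$ (as $\rho$ is a generalized adelic measure) and $\chi_\infty \in L^1(Y)$, this shows that $F^-\in L^1(Y)$, and together with the hypothesis $\int_Y F\,d\mu < \infty$ this forces $F \in L^1(Y)$. Next I would compare $F(y)$ with the integrand of the standard height: expanding by the bilinearity and symmetry of the energy pairing — exactly as in the proofs of Proposition \ref{prop:rho-height-exists-well-defined} and Lemma \ref{lemma:integrability-of-delta-heights}, and using $(\lambda_y,\lambda_y)_y = 0$ — one obtains, for $\mu$-almost every $y$,
\[
 (\lambda_y - \Delta, \lambda_y - \Delta)_y = F(y) - (\rho_y,\rho_y)_y + 2(\rho_y - \lambda_y, \Delta)_y.
\]
By Lemma \ref{lemma:pairing-terms-bdd} we have $\abs{(\rho_y,\rho_y)_y} \leq 4C(y)$, and by Lemmas 2.5 and 4.4 of \cite{FRL} the mixed term equals $\int_{\sA^1_y} g_y\,d\Delta$, so $\abs{(\rho_y - \lambda_y, \Delta)_y} \leq C(y)$; both are therefore in $L^1(Y)$. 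Hence the right-hand side lies in $L^1(Y)$, so $h(\Delta) = \frac12\int_Y (\lambda_y - \Delta, \lambda_y - \Delta)_y\,d\mu(y)$ is finite, and Proposition \ref{prop:Delta-finite-Weil-height} then gives that $\Delta$ has finite height.

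I do not expect any genuine obstacle here: the argument is essentially bookkeeping with the linearity of the energy pairing and the $L^1$-bounds already in hand. The only points that need a little care are (i) the reading of ``$h_\rho(\Delta)$ exists'': thanks to the lower bound of Lemma \ref{lemma:integrability-of-delta-heights} the defining integral is automatically well-defined in $(-\infty,+\infty]$, so the hypothesis is really that this integral is finite, which is exactly what the argument uses; and (ii) that Lemma \ref{lemma:pairing-terms-bdd} and the identity $(\rho_y - \lambda_y, \Delta)_y = \int_{\sA^1_y} g_y\,d\Delta$ are available only on the full-measure set of places where $g_y$ is continuous, which is harmless since every step takes place under a $\mu$-integral.
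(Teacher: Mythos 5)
Your proof is correct and follows essentially the same route as the paper: the paper deduces the corollary from Proposition \ref{prop:Delta-finite-Weil-height} together with the comparison $h_\rho = h + O(1)$ of Theorem \ref{thm:gen-h-is-weil-nonneg}, and your integrand-level estimates (Lemma \ref{lemma:integrability-of-delta-heights}, Lemma \ref{lemma:pairing-terms-bdd}, and the identity $(\rho_y-\lambda_y,\Delta)_y=\int_{\sA^1_y} g_y\,d\Delta$) are precisely the bounds underlying that comparison. If anything, your unpacking of the ``only if'' direction is slightly more careful than a bare citation, since Theorem \ref{thm:gen-h-is-weil-nonneg} is stated only for $\Delta$ of finite height, so the pointwise-in-$y$ comparison you carry out is exactly what is needed to run the argument in that direction.
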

\begin{rmk}
Note that it Corollary \ref{cor:finite-height-is-same-for-all-heights} means that if $h_\rho(\Delta)$ exists and is finite for one generalized adelic measure, then it exists and is finite for all generalized adelic measures.
\end{rmk}

The reader may wonder what use we have for allowing arbitrary discrete probability measures on $\Qbar$ rather than looking at point masses on algebraic numbers, or finite probability measures equally supported on the Galois conjugates of an algebraic number of a base field -- especially given that we have gone to some trouble in order to define heights associated to this more general class of probability measures. There are several reasons for this, one being that the equidistribution theorem for stochastic dynamical systems, which we will prove below, naturally is phrased in terms of random backwards orbits of a point under the stochastic system, and when we have infinitely many maps in this system, our backwards orbits measures will generally have infinite support. 

However, there are other reasons why we may wish to do this. One can be observed in the proof of the essential nonnegativity of the height associated to a generalized adelic measure, where, rather than taking Galois orbits over a base number field (which may not exist for a generalized adelic measure!) and applying Northcott's theorem, we proved essential nonnegativity by forming discrete measures that averaged over the algebraic numbers of negative height. The fact that this proof avoided invoking Northcott's theorem seems to indicate it has the potential for broader applications.

Yet another reason that one might be interested in the heights of discrete measures on $\bP^1(\Qbar)$, including the Weil height, is that they behave well in forming averages. The following proposition, which demonstrates this fact, will be a key result in proving our results, and is of, we believe, independent interest:
\begin{prop}\label{prop:height-average}
 Let $\rho$ be a generalized adelic measure and suppose that we are given a (possibly finite) sequence of discrete probability measures $\Delta_1,\Delta_2, \ldots$ on $\bP^1(\Qbar)$, each of finite height, and real numbers $t_1,t_2, \ldots\geq 0$ with $\sum_n t_n= 1$. Then 
 \[
  h_\rho\bigg(\sum_n t_n \Delta_n\bigg) = \sum_n t_n \, h_\rho(\Delta_n). 
 \]
\end{prop}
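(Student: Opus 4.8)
The plan is to expand $h_\rho\bigl(\sum_n t_n\Delta_n\bigr)$ using the bilinearity of the energy pairing and match it term-by-term with $\sum_n t_n h_\rho(\Delta_n)$; the only genuine work is justifying an interchange of summation with integration over $Y$. Write $\Delta = \sum_n t_n \Delta_n$, again a discrete probability measure on $\bP^1(\Qbar)$. A preliminary observation I would record is that $\Delta$ has finite height exactly when $\sum_n t_n h(\Delta_n)<\infty$: by Tonelli's theorem on the nonnegative integrand and then \eqref{eqn:finite-std-height-is-log-plus-terms} applied to each $\Delta_n$,
\[
 \int_Y \int_{\sA^1_y} \log^+\abs{z}_y \, d\Delta(z)\,d\mu(y) = \sum_n t_n \int_Y\int_{\sA^1_y}\log^+\abs{z}_y\,d\Delta_n(z)\,d\mu(y) = \sum_n t_n\, h(\Delta_n).
\]

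Assuming first that this common value is finite, so that $\Delta$ and every $\Delta_n$ has finite height, I would expand by bilinearity
\[
 (\rho_y-\Delta,\rho_y-\Delta)_y = (\rho_y,\rho_y)_y - 2(\Delta,\rho_y)_y + (\Delta,\Delta)_y,
\]
and similarly with each $\Delta_n$ in place of $\Delta$. By Lemma \ref{lemma:discrs-int-to-0}, $\int_Y(\Delta,\Delta)_y\,d\mu(y) = 0$ and $\int_Y(\Delta_n,\Delta_n)_y\,d\mu(y)=0$ for all $n$, and since $\sum_n t_n = 1$ the $\int_Y(\rho_y,\rho_y)_y\,d\mu(y)$ contributions on the two sides coincide. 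So the claim reduces to the identity $\int_Y (\Delta,\rho_y)_y\,d\mu(y) = \sum_n t_n \int_Y (\Delta_n,\rho_y)_y\,d\mu(y)$.

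For $\mu$-almost every $y$ the potential $p_{\rho_y}(z) = g_y(z) + \log^+\abs{z}_y$ of $\rho_y$ is a genuine real-valued function (Lemma \ref{lemma:p-is-g-plus-log}), so $(\Delta,\rho_y)_y = -\int_{\sA^1_y} p_{\rho_y}\,d\Delta = \sum_n t_n(\Delta_n,\rho_y)_y$ holds pointwise in $y$; the remaining point is to interchange $\sum_n$ with $\int_Y$. For this I would use $\abs{(\Delta_n,\rho_y)_y}\le \int_{\sA^1_y}\bigl(\log^+\abs{z}_y + C(y)\bigr)\,d\Delta_n(z)$, which yields $\sum_n t_n\int_Y\abs{(\Delta_n,\rho_y)_y}\,d\mu(y)\le \sum_n t_n\,h(\Delta_n) + \int_Y C(y)\,d\mu(y)<\infty$ by the finiteness hypothesis and $C\in L^1(Y)$; Fubini--Tonelli then licenses the interchange, finishing this case. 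For the degenerate case $\sum_n t_n h(\Delta_n)=\infty$, I would argue both sides equal $+\infty$: then $\Delta$ lacks finite height, but by Lemma \ref{lemma:integrability-of-delta-heights} the integrand in \eqref{eqn:height-defn} is bounded below by an $L^1(Y)$ function, so $h_\rho(\Delta)$ is well-defined in $(-\infty,\infty]$ and hence equals $+\infty$ by Corollary \ref{cor:finite-height-is-same-for-all-heights}; and $h_\rho(\Delta_n)=h(\Delta_n)+e_n$ with $\abs{e_n}\le 3\int_Y C\,d\mu$ (by \eqref{eqn:explicit-height-bound}) and $h(\Delta_n)\ge 0$, so $\sum_n t_n h_\rho(\Delta_n)=+\infty$ as well.

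The main obstacle is convergence bookkeeping rather than conceptual content: one must check that $\Delta=\sum_n t_n\Delta_n$ still has finite height before invoking Lemma \ref{lemma:discrs-int-to-0} on it, and produce the uniform-in-$n$ integrable majorant $\abs{(\Delta_n,\rho_y)_y}\le \int_{\sA^1_y}(\log^+\abs{z}_y+C(y))\,d\Delta_n(z)$ that makes the Fubini--Tonelli step legitimate. Once these are in hand, the bilinear algebra of the energy pairing together with Lemma \ref{lemma:discrs-int-to-0} gives the result at once.
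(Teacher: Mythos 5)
Your proposal is correct, but it is organized differently from the paper's proof. The paper's argument makes no case distinction: it first establishes, for an \emph{arbitrary} discrete probability measure $\Delta$ (no finite-height hypothesis), the discriminant-free formula $h_\rho(\Delta) = \tfrac12\int_Y \big[(\rho_y,\rho_y)_y - 2(\rho_y,\Delta)_y\big]\,d\mu(y)$, by two applications of Tonelli's theorem to the Arakelov--Green's-type kernel $G(z,w)$ (made nonnegative by adding the $L^1$ minorant $4C(y)+\chi_\infty(y)\log 2$ from Lemma \ref{lemma:integrability-of-delta-heights}) together with the product formula, and then a third Tonelli application with the bound $(\rho_y,\rho_y)_y - 2(\rho_y,t_n\Delta_n)_y \ge -6C(y)$ to interchange $\sum_n$ with $\int_Y$; this uniform treatment is what lets the subsequent remark drop the finite-height assumption entirely. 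You instead split into cases according to whether $\sum_n t_n h(\Delta_n)$ is finite. In the main case you reuse Lemma \ref{lemma:discrs-int-to-0} (which already contains the heavy Tonelli/product-formula work) on $\Delta$ and each $\Delta_n$ after checking, correctly, that $\Delta = \sum_n t_n\Delta_n$ has finite height, and you reduce to the mixed term with the legitimate majorant $\abs{(\Delta_n,\rho_y)_y}\le \int_{\sA^1_y}(\log^+\abs{z}_y + C(y))\,d\Delta_n(z)$; in the degenerate case you force both sides to $+\infty$ via Lemma \ref{lemma:integrability-of-delta-heights}, Corollary \ref{cor:finite-height-is-same-for-all-heights}, and the bound \eqref{eqn:explicit-height-bound}, all of which precede the proposition, so there is no circularity — and the degenerate case is genuinely needed even under the stated hypotheses, since infinitely many finite heights can have divergent weighted sum. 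The trade-off: your route is shorter and leans on already-proved lemmas, while the paper's route yields the intermediate identity \eqref{eqn:height-without-discr-term} valid without any finiteness assumption, which is used in the remark following the proposition. The only step I would tighten is the pointwise identity $(\Delta,\rho_y)_y=\sum_n t_n(\Delta_n,\rho_y)_y$: justify it at $\mu$-a.e.\ $y$ using $\abs{p_{\rho_y}(z)}\le \log^+\abs{z}_y + C(y)$ and the a.e.\ finiteness of $\int_{\sA^1_y}\log^+\abs{z}_y\,d\Delta(z)$, exactly as your majorant suggests.
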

\begin{rmk}
 We note that in Proposition \ref{prop:height-average} above, we did not need to assume that our measures $\Delta_n$ above have finite height. However, if $h_\rho(\Delta_n)=\infty$ for some $n$ and $t_n>0$, it will follow that $h_\rho\big(\sum_n t_n \Delta_n\big) = \infty$ as well, as we will demonstrate that the integrands are all bounded below for each place $y$ by an $L^1(Y)$ function.
\end{rmk}
\begin{proof}[Proof of Proposition \ref{prop:height-average}]
 We recall that by \eqref{eqn:height-defn},
\begin{align*}
 h_\rho(\Delta_n) &= \frac12 \int_Y (\rho_y - \Delta_n, \rho_y - \Delta_n)_y\,d\mu(y)\\
 &= \int_Y \frac12 (\rho_y, \rho_y)_y  - (\Delta_n, \rho_y)_y + \frac12 (\Delta_n,\Delta_n)_y\,d\mu(y)
\end{align*}
for each $n\geq 1$. The key observation is that, by Lemma \ref{lemma:integrability-of-delta-heights} above, our integrand is within an $L^1$ function of being nonnegative, so we will be able to apply Tonelli's theorem to bring the integrals over all places into the sums involved. This will eliminate the discriminant-type terms -- the only terms in the height which do not \emph{prima facie} appear to linearly average -- and allow us to derive the result. This is less trivial than it may seem at first glance, as we are \emph{not} assuming that all of the heights are finite.

We start by showing that, regardless of whether the height is finite or infinite, we can disregard the discriminant term in its computation. Let $\Delta$ be a given discrete probability measure on $\bP^1(\Qbar)$. As the result is trivial if $\Delta$ is supported on a finite set, we assume that it does not. Write $\Delta$ as 
 \[
  \Delta = \sum_{i=1}^\infty t_i \delta_{\al_i},
 \]
 where $\al_i\in\bP^1(\Qbar)$ are distinct, and let
\[
 \frac1m = \int_{\sA^1_y\times \sA^1_y\setminus\mathrm{Diag}_y } 1\,d\Delta(z)d\Delta(w),
\]
 where again $\mathrm{Diag}_y = \{(z,z)\in \sA^1_y\times \sA^1_y : z\in \bC_y\}$ is the diagonal of classical points. Notice that, since $\Delta$ is not a point mass as we assumed it is not supported on a finite set, $0<m<1$. Let $G(z,w)$ be the function given by
 \[
 G(z,w) = m\big(p_{\rho_y}(z) + p_{\rho_y}(w)\big) - \log\,\abs{z-w}_y + m(\rho_y,\rho_y)_y.
 \]
 Notice that $G$ is a slightly different normalization of the usual Arakelov-Green's function for $\rho_y$, where $p_{\rho_y}(z)$ is the associated potential function. Note that as $\Delta$ is supported on $\bP^1(\Qbar)$, instead of our usual local integral over $\sA^1_y\times \sA^1_y\setminus \mathrm{Diag}_y$, we will integrate over $\bA^1(\Qbar)\times\bA^1(\Qbar)\setminus D$ where $D = \{(\al,\al) : \al\in \Qbar\}$. with this normalization,
 \[
  \int_{\bA^1(\Qbar)\times\bA^1(\Qbar)\setminus D} G(z,w)\,d\Delta(z)d\Delta(w) = (\rho_y-\Delta,\rho_y-\Delta)_y,
 \]
 so 
 \[
  h_\rho(\Delta) = \frac12 \int_Y \int_{\sA^1_y\times \sA^1_y\setminus\mathrm{Diag}_y } G(z,w)\,d\Delta(z)d\Delta(w)\,d\mu(y).
 \]
 Now, by Lemma \ref{lemma:integrability-of-delta-heights},

\begin{align*}
  &\hspace{-5mm}(\rho_y-\Delta,\rho_y-\Delta)_y + 4C(y) + \chi_\infty(y)\log 2\\
  &= \int_{\bA^1(\Qbar)\times\bA^1(\Qbar)\setminus D} G(z,w)\,d\Delta(z)d\Delta(w) + 4C(y) + \chi_\infty(y)\log 2\\
  &= \int_{\bA^1(\Qbar)\times\bA^1(\Qbar)\setminus D} G(z,w) + m(4C(y) + \chi_\infty(y)\log 2)\,d\Delta(z)d\Delta(w)\\
  &\geq 0,  
 \end{align*}
 and since $4C(y) + \chi_\infty(y)\log 2$ is a nonnegative integrand with 
 \[
  \int_Y \int_{\bA^1(\Qbar)\times\bA^1(\Qbar)\setminus D} (4C(y) + \chi_\infty(y)\log 2) \,d\Delta(z)d\Delta(w)\,d\mu(y) < \infty,
 \]
 we can apply Tonelli's theorem to the integrals defining $h_\rho(\Delta)$ and interchange the order of integration:
 \[
  h_\rho(\Delta) = \frac12 \int_{\bA^1(\Qbar)\times\bA^1(\Qbar)\setminus D } \int_Y G(z,w)\,d\mu(y)\,d\Delta(z)d\Delta(w).
 \]
 However, for any fixed $z\neq w$ in the support of $\Delta$, we clearly have
 \[
  \int_Y G(z,w)\,d\mu(y) = \int_Y m(p_{\rho_y}(z) + p_{\rho_y}(w)) - \log\,\abs{z-w}_y + m(\rho_y,\rho_y)_y\,d\mu(y).
 \]
 We write the integrals
 \[
  \int_Y p_{\rho_y}(z)\,d\mu(y) = -\int_Y (\rho_y, \delta_z)_y\,d\mu(y) ,\quad 
  \int_Y p_{\rho_y}(w)\,d\mu(y) = -\int_Y (\rho_y, \delta_w)_y\,d\mu(y) ,
 \]
 and note that the integral of the third term is
 \[
  \int_Y -\log\,\abs{z-w}_y \,d\mu(y) = 0
 \]
 by the product formula, as $z-w\in\Qbar$ is a single algebraic number by our assumptions on $\Delta$. It follows that 
 \begin{equation}
  \begin{split}
   h_\rho(\Delta) &= \frac12 \int_{\bA^1(\Qbar)\times\bA^1(\Qbar)\setminus D} \int_Y G(z,w)\,d\mu(y)\,d\Delta(z)d\Delta(w)\\
   &= \frac12 \int_{\bA^1(\Qbar)\times\bA^1(\Qbar)\setminus D } m \int_Y (\rho_y,\rho_y)_y -(\rho_y, \delta_z)_y - (\rho_y, \delta_w)_y\,d\mu(y) \,d\Delta(z)d\Delta(w).
  \end{split}
 \end{equation}
Now again, we wish to show that we can add a nonnegative integrable function and make the integrand nonnegative, so we can apply Tonelli's theorem. By Lemma \ref{lemma:pairing-terms-bdd},
\[
 (\rho_y,\rho_y)_y \geq -2C(y),
\]
and, as we noted in the proof of Lemma \ref{lemma:integrability-of-delta-heights} above,
\[
 -(\rho_y-\lambda_y,\delta_z)_y = g_y(z).
\]
 By Lemma \ref{lemma:p-is-g-plus-log},
 $
  g_y(z) = p_{\rho_y}(z) - \log^+\,\abs{z}_y,
 $
 so the bound 
 \[
  \sup_{z\in\sA^1_y} \abs{g_y(z)} = C(y)
 \]
 yields that 
 \[
  -(\rho_y, \delta_z)_y \geq -C(y).
 \]
 Thus we can apply Tonelli's theorem and interchange the integrals once more:
 \begin{equation}\label{eqn:height-without-discr-term}
  \begin{split}
   h_\rho(\Delta) &= \frac{m}{2} \int_Y \int_{\sA^1_y\times \sA^1_y\setminus\mathrm{Diag}_y } (\rho_y,\rho_y)_y -(\rho_y, \delta_z)_y - (\rho_y, \delta_w)_y\,d\Delta(z)d\Delta(w)\,d\mu(y)\\
   &= \frac12 \int_Y (\rho_y,\rho_y) - 2(\rho_y, \Delta)_y\,d\mu(y). 
  \end{split}
 \end{equation}
 Using this, we see that 
 \begin{equation*}
  \begin{split}
    \sum_n t_n h_\rho(\Delta_n) &= \sum_n t_n \frac12 \int_Y (\rho_y,\rho_y) - 2 (\rho_y, \Delta_n)_y\,d\mu(y)\\
    &= \sum_n \frac12 \int_Y t_n (\rho_y,\rho_y) - 2t_n (\rho_y, \Delta_n)_y\,d\mu(y)\\
    &= \sum_n \frac12 \int_Y t_n (\rho_y,\rho_y) - 2 (\rho_y, t_n\Delta_n)_y\,d\mu(y).
  \end{split}
 \end{equation*}
 Again, using the same bounds as above,
 \[
  (\rho_y,\rho_y) - 2 (\rho_y, t_n\Delta_n)_y\geq -6C(y),
 \]
 so again, Tonelli's theorem applies and we can interchange the sums and use the linearity of the pairing to say that
\begin{equation*}
  \begin{split}
    \sum_n t_n h_\rho(\Delta_n) &= \sum_n \frac12 \int_Y t_n (\rho_y,\rho_y) - 2 (\rho_y, t_n\Delta_n)_y\,d\mu(y)\\
    &= \frac12 \int_Y \sum_n t_n (\rho_y,\rho_y) - 2 \sum_n (\rho_y, t_n\Delta_n)_y\,d\mu(y)\\
    &= \frac12 \int_Y (\rho_y,\rho_y) - 2 \bigg(\rho_y, \sum_n t_n\Delta_n\bigg)_y\,d\mu(y) = h_\rho\bigg(\sum_n t_n \Delta_n\bigg),
  \end{split}
 \end{equation*}
 which is what we wanted to prove.
\end{proof}

\subsection{Measures defined over a single number field}\label{sec:comparision-to-MY}
In this section we will compare our notion of a generalized adelic measure to the earlier notions of an \textit{adelic measure} in Favre and Rivera-Letelier \cite{FRL} and of a \textit{quasi-adelic measure} in \cite{MavrakiYe}. The main focus of this section is proving that, when our generalized adelic measure is defined over a single number field, the finiteness conditions given in Mavraki and Ye are in fact equivalent to those here. This section can be safely skipped if the reader is interested only in the new results of this paper.

First, observe that if $K$ is a number field and $\rho = (\rho_v)_{v\in M_K}$ is an adelic measure in the sense of Favre and Rivera-Letelier, then $\rho$ extends to a generalized adelic measure defined over $K$. Indeed, for a place $y \in Y$, if we let $v \in M_K$ be the place such that $y \mid v$, we can define $\rho_y = \rho_{v}$ and $g_y = g_{v}$. For places $v \in M_K$ lying above $p \in M_\bQ$, the functions $g_v(z) = g(v,z)$ are continuous on $\sP^1(\C_p)$, normalized so that $g_v(\infty) = 0$, and they satisfy $\Delta g_v = \rho_v - \lambda_v$, hence the same is true for $g_y$ for all $y \in Y(\Q,p)$. Moreover, since $\rho_v$ must be the standard measure for all but finitely many places $v \in M_K$, we have $C(v) := \sup_{z \in \sP^1(\C_v)} |g_v(z)| = 0$ for all but finitely many places $v \in M_K$, hence the integral $\int_Y C(y) d\mu(y)$ is really the finite sum $\sum_{v \in M_K} C(v)$, which yields a finite value. Thus, in the case that our generalized adelic measure is defined over a number field $K$ and only differs from the standard measure at a finite number of places, our measure can be identified with an adelic measure. 

Now suppose that our measure is defined over a single number field, but perhaps has an infinite number of places where it is not the standard measure. We wish to show that it defines a quasi-adelic measure in the sense of Mavraki and Ye. Let us recall their construction. Given measures $(\rho_v)_{v\in M_K}$, they define the \emph{inner} and \emph{outer radii} in the following fashion. First, they assume that each $\rho_v$ admits a continuous potential, that is, that there is a continuous function $g : \sP^1_v \ra \bR$ such that \[\Delta g = \rho_v - \lambda_v.\] (We note in passing that their normalized Laplacian has the opposite sign as ours, so this equation is the reverse of what appears in their paper.) As any two choices of $g$ differ only by a constant, $g$ is uniquely determined by its value at $\infty$. Define \[p_v(z) = g(z) + \log^+\,\abs{z}_v.\] They normalize so that if we define $G : \bC_v^2\ra \bR\cup\{-\infty\}$ by 
\[
 G(z,w) = \begin{cases}
  p_v(z/w) + \log\,\abs{w}_y &\text{if }z,w\in \bC_v\text{ and }w\neq 0,\\
  \log\,\abs{z}_v + g(\infty) &\text{if }z,w\in \bC_v\text{ and }z\neq 0,\ w = 0,\\
  -\infty &\text{if }z=w=0,
 \end{cases}
\]
then $G$ is a homogeneous logarithmic potential. (Note that the opposite sign in front of $g(\infty)$ versus equation (2.3) in \cite{MavrakiYe} is a result of our opposite choice of sign for the Laplacian.) In particular, $G(\alpha z,\alpha w) = \log\,\abs{\al}_v + G(z,w)$ for all $z,w\in\bC_v$ and $\alpha\in\bC_v^\times$. For any $z\in \bP^1(\bC_v)$, we let  $\twid z = (z_0,z_1)$ denote any homogeneous lift of $z$ to $\bC_v^2$, and for $z=(z_0,z_1)$ and $w=(w_0,w_1)\in \bC_v^2$, we let $z\wedge w = z_0 w_1 - z_1 w_0$. If we then define the Arakelov-Green's function  $g_{\rho_v} : \sP^1_v \times \sP^1_v \ra \bR\cup\{\infty\}$ by defining it for $z,w\in\bP^1(\bC_v)$ by
\[
 g_{\rho_v}(z,w) = - \log \,\abs{\twid{z}\wedge \twid{w}}_v + G(\twid z) + G(\twid w),
\]
then this extends naturally to a function on $\sP^1_v\times \sP^1_v$ which is continuous and finite off of the diagonal. (We refer the reader to \cite[Section 3.4]{BakerRumely} for more details on these Arakelov-Green's functions.) Notice that $G$, and hence $g_{\rho_v}$, are only defined up to the choice of constant $g(\infty)$. This is chosen so that the Arakelov-Green's function satisfies the normalization condition
\begin{equation}\label{eqn:a-g-is-normalized}
 \iint_{\sP^1_v\times \sP^1_v} g_{\rho_v}(z,w)\,d\rho_v(z)\,d\rho_v(w) = 0.
\end{equation}
 As observed in \cite[Section 3.5]{BakerRumely}, this normalization is in fact equivalent to the (seemingly stronger) condition that  \begin{equation}\label{eqn:BR-arakelov-greens-normalization}
 \int_{\sP^1_v} g_{\rho_v}(z,w)\,d\rho_v(z) \equiv 0.
 \end{equation}
 With this normalization, we then define the adelic set $(M_v)_{v\in M_K}$ by
 \[
 M_v = \{(z,w)\in \bC_{v}^2 : G(z,w)\leq 0\}.
 \]
 In fact, $M_v$ is a compact, circled, and pseudoconvex set in $\bC_v^2$, and by our choice of normalization, it has homogeneous capacity $\ON{cap}(M_v) = 1$. (The notion of homogeneous capacity was introduced by DeMarco \cite{DeMarcoDynamicsCapacity} in the archimedean setting, and extended to the non-archimedean setting by Baker and Rumely \cite[Section 3.3]{BakerRumely}.) Endow $\bC_v^2$ with the norm $\norm{(z,w)} = \max\{\abs{z}_v,\abs{w}_v\}$ and let $B(r) = \{(z,w)\in\bC_v^2 : \norm{(z,w)}\leq r\}$. Then the \emph{inner radius of $M_v$} is defined to be
 \[
  r_{\text{in},v} = \sup \{ r > 0 : B(r) \subset M_v \},
 \]
 and the outer radius of $M_v$ is defined to be 
 \[
  r_{\text{out},v} = \inf \{ r> 0 : M_v \subset B(r)\}.
 \]
 For a measure $\mu_v$ which admits a continuous potential we define the inner and outer radii of the measure to the inner and outer radii of the associated set $M_v\subset \bC_v^2$ constructed with the above normalization. 
 
 Then Mavraki and Ye define for a number field $K$ the sequence of measures $(\rho_v)_{v\in M_K}$ to be a \emph{quasi-adelic measure} if the following conditions are met:
 \begin{enumerate}
    \item \label{enum:my-1}Each $\rho_v$ is a probability measure on $\sP^1_v$ which admits a continuous potential with respect to the standard measure.
    \item \label{enum:my-2}We have the finiteness condition:
    \begin{equation}\label{eqn:r-in-r-out-converges}
     \sum_{v\in M_K} [K_v:\bQ_v]\cdot \abs{\log  r_{\text{in},v}} < \infty \quad\text{and} \quad 
     \sum_{v\in M_K} [K_v:\bQ_v]\cdot \abs{\log  r_{\text{out},v}} < \infty.
    \end{equation}
 \end{enumerate}
 If $\rho$ is a generalized adelic measure defined over $K$ and we identify it with $(\rho_v)_{v\in M_K}$, then clearly it meets condition \eqref{enum:my-1}. What must be shown is that our finiteness condition \eqref{enum:gen-adelic-2} from Definition~\ref{defn:gen-adelic-measure} is equivalent to the finiteness condition \eqref{enum:my-2} stated above for the inner and outer radii. It is not immediately obvious that this geometric condition on the adelic set $M_v$ is related to the bounds of the potential function $g(z)$, however, we will see that this is indeed the case, and while our normalization differs slightly, the two conditions are indeed equivalent.
 
 In order to see this, we first observe that with the notation above, for any $z\in\sP^1_v$ and $\twid{z}\in\bC_v^2$ a lift, $\log\, \norm{\twid{z}}_v - G(\twid{z})$ does not depend on the choice of lift, as each factor scales logarithmically with the absolute value of $\alpha$ if we replace $\twid{z}$ with $\alpha \twid{z}$. Thus in fact, for $z\in \bC_v$, 
 \[
  \log\, \norm{\twid{z}}_v - G(\twid{z}) = \log^+\abs{z}_v - p_v(z) = -g(z).
 \]
 Now, observe that for any $\twid{z}$, by scaling by some $\al\in \bC_v^\times$, we can arrange that $G(\al \twid{z}) = 0$, so $\twid{z}\in M_v$. By the definition of $r_{\text{in},v}$ and $r_{\text{out},v}$, it follows that 
 \[
  r_{\text{in},v}\leq \norm{\al \twid{z}} \leq r_{\text{out},v},
 \]
 so $-g(z) = \log\, \norm{\twid{z}}_v - G(\twid{z}) = \log\,\norm{\al\twid{z}}$, and thus 
 \[
   \log r_{\text{in},v} \leq -g(z) \leq \log r_{\text{out},v}.
 \]
  In fact, by continuity and the density of $\bC_v$ in $\sP^1_v$, we have
 \begin{equation}\label{eqn:r-bounds-on-g}
  \log r_{\text{in},v} = \inf_{z\in \sP^1_v} -g(z)
  \quad\text{and}\quad 
  \log r_{\text{out},v} = \sup_{z\in \sP^1_v} -g(z). 
 \end{equation}
 Thus we see that the inner and outer radii are in fact bounds on the potential function $g$ for which $\Delta g = \rho_v - \lambda_v$. However, Mavraki and Ye's choice of potential function differs from ours by a constant. We chose to normalize so that $g(\infty) = 0$, however, Mavraki and Ye normalized so that the double integral of the Arakelov-Green's function vanishes as in \eqref{eqn:a-g-is-normalized} above. To determine the value of $g(\infty)$ in their normalization, we use the fact that the function $F:\sP^1_y \ra\bR$ given by
 \[
 F(w) = \int_{\sP^1_v} g_{\rho_v}(z,w)\,d\rho_v(z) 
 \]
 is in fact constant as a function of $w$, as one can show that the function is harmonic on $\sP^1_y$; see the discussion in \cite[Section 3.5, Lemma 5.14]{BakerRumely} for more details. By the normalization in \eqref{eqn:BR-arakelov-greens-normalization}, $F\equiv 0$, and we will evaluate $F(\infty)$ in order to determine the constant:
 \begin{align*}
  F(\infty) &= \int_{\sP^1_v} G(\twid{z}) + G(1,0) - \log\, \abs{\twid{z}\wedge (1,0)}_v\,d\rho_v(z)\\
  &= \int_{\sP^1_y} p_v(z) + g(\infty) - \log\,\abs{0\cdot z - 1\cdot 1}_v \,d\rho_v(z)\\
  &= g(\infty) + \int_{\sP^1_v} p_v(z)\,d\rho_v(z).
 \end{align*}
 Now let
 \[
 u(z) = \int_{\sP^1_v}\log \,\abs{z-w}_{v}\,d\rho_v(w),
 \]
 and observe that, by considering asymptotics as $z\ra\infty$,
 $
 p_v(z) = g(\infty) + u(z).
 $
 Further, by \cite[Lemmes 2.5, 4.4]{FRL},
 \[
  \int_{\sP^1_v} u(z)\,d\rho_v(z) = -(\rho_v,\rho_v)_v.
 \]
 It follows that 
 \[
  F(\infty) = 2g(\infty) - (\rho_v,\rho_v)_v = 0,\quad\text{so}\quad
  g(\infty) = \frac12 (\rho_v,\rho_v)_v.
 \]
 Now let us define the analogue of our bounds on the potential function for this measure. Let $g_v : \sP^1_v \ra \bR$ be the potential function with $\Delta g_v = \rho_v - \lambda_v$ and normalized according to Mavraki and Ye's normalization $g_v(\infty) = \frac12 (\rho_v,\rho_v)_v$. Then if we define
 \[
  C_v = \sup_{z\in\sP^1_v} \abs{g_v(z) - g_v(\infty)},
 \]
 then for the generalized measure $(\rho_y)_{y\in Y}$ defined by $\rho_y = \rho_v$ where $y\mid v$, the bounding function $C : Y \ra [0,\infty]$ will be given by 
 \begin{equation}\label{eqn:C-y-in-terms-of-C-v}
  C(y) = \sum_{v\in M_K} C_v \chi_{Y(K,v)}(y).
 \end{equation}
 Then 
 \[
  \int_Y C(y)\,d\mu(y) = \sum_{v\in M_K} \frac{[K_v:\bQ_v]}{[K:\bQ]} C_v. 
 \]
  Now, we wish to show that $C\in L^1(Y)$ if and only if equation \eqref{eqn:r-in-r-out-converges} holds. First, suppose $C\in L^1(Y)$. Then 
  \[
   \sup_{z\in \sP^1_v} \abs{g(z)} \leq C_v + \abs{g(\infty)} = C_v + \frac12 \abs{(\rho_v,\rho_v)_v} \leq 3C_v,
  \]
  where we have applied Lemma \ref{lemma:pairing-terms-bdd} to bound $\abs{(\rho_v,\rho_v)_v}\leq 4C_v$. It follows that 
  \[
   -3C_v \leq \log r_{\text{in},v} \leq \log r_{\text{out},v} \leq 3C_v
  \]
  and so
  \[
   \sum_{v\in M_K} [K_v:\bQ] \abs{\log r_{\text{in},v}} \leq [K:\bQ] \cdot 3 \int_Y C(y)\,d\mu(y) < \infty,
  \]
  and likewise for the outer radius. 
  
  Now assume that $(\rho_v)_{v\in M_K}$ meets the condition \eqref{eqn:r-in-r-out-converges}. By equation \eqref{eqn:r-bounds-on-g},
  \[
   \sup_{z\in\sP^1_v} \abs{g_v(z)} = \max \{ \abs{\log r_{\text{in},v}}, \abs{\log r_{\text{out},v}}\}. 
  \]
  It follows that for our normalization,
  \[
   C_v = \sup_{z\in\sP^1_v} \abs{g_v(z) - g_v(\infty)} \leq 2 \sup_{z\in\sP^1_v} \abs{g_v(z)} = 2 \max \{ \abs{\log r_{\text{in},v}}, \abs{\log r_{\text{out},v}}\}.
  \]
  In particular, this now guarantees that for $C(y)$ defined as in \eqref{eqn:C-y-in-terms-of-C-v}, 
  \[
  \int_Y C(y)\,d\mu(y) = 2\sum_{v\in M_K} \frac{[K_v:\bQ_v]}{[K:\bQ]} \max \{ \abs{\log r_{\text{in},v}}, \abs{\log r_{\text{out},v}}\} < \infty.
  \]
  Thus, the two finiteness conditions are equivalent. 
  
\subsection{Equidistribution results}
In order to define the regularization techniques used in the following section, we start by recalling the Favre and Rivera-Letelier retraction-by-$\ep$ map on $\sP^1(\bC_p)$, for non-archimedean primes $p$. Let $\ep \geq 0$ be given, and we define $\pi_\ep(\zeta)$ to be the unique minimal point in $\sP^1_p$ on the arc between $\zeta$ and $\infty$ which has diameter at least $\ep$. We will want to vary the function in $\ep$, so we start by proving $\pi_\ep(\zeta)$ is continuous as a function of two variables:
\begin{lemma}\label{lemma:retraction-is-two-var-cts}
 Define the map
 \begin{align*}
  \pi : \bR_{\geq 0} \times \sP^1(\bC_p) &\ra \sP^1(\bC_p) \\
   (\ep, \zeta) &\mapsto \pi_\ep(\zeta).
 \end{align*}
 Then $\pi$ is a continous function, where $\sP^1(\bC_p)$ has the Berkovich topology, $\bR_{\geq 0}$ has the usual real subspace topology, and $\bR_{\geq 0} \times \sP^1(\bC_p)$ has the associated product topology.
\end{lemma}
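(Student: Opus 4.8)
The plan is to exploit the $\bR$-tree structure of the Berkovich line together with the (only) upper semicontinuity of the diameter function, via a net/compactness argument. Write $\diam : \sP^1(\bC_p) \to [0,\infty]$ for the diameter of the closed disk attached to a point, so that $\diam(\infty) = \infty$, $\diam$ vanishes exactly on the type I points, and for every $\zeta$ the restriction $\diam|_{[\zeta,\infty]}$ is a strictly increasing homeomorphism of the geodesic arc $[\zeta,\infty]$ onto $[\diam(\zeta),\infty]$; in this language $\pi_\ep(\zeta)$ is the unique point of $[\zeta,\infty]$ with $\diam(\pi_\ep(\zeta)) = \max(\ep,\diam(\zeta))$. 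I would record three standard facts about $\sP^1(\bC_p)$: (i) $\diam$ is upper semicontinuous; (ii) $\sP^1(\bC_p)$ is compact and locally connected, so for any point $\eta$ the connected components of $\sP^1(\bC_p)\setminus\{\eta\}$ are open, and the one containing $\infty$ is \emph{upward closed} (stable under $\xi\mapsto[\xi,\infty]$), as are the basic neighborhoods $\{\xi : |T|_\xi > R\}$ of $\infty$; and (iii) the relation $R=\{(\zeta,\xi): \xi\in[\zeta,\infty]\}$ is closed in $\sP^1(\bC_p)\times\sP^1(\bC_p)$. Fact (iii) is quick to prove: since $\bC_p$ is algebraically closed, $\xi\in[\zeta,\infty]$ if and only if $|T-a|_\zeta \le |T-a|_\xi$ for every $a\in\bC_p$ (this merely says the disk of $\zeta$ is contained in that of $\xi$), and each map $\xi\mapsto|T-a|_\xi$ is continuous with values in $[0,\infty]$, so $R$ is an intersection of closed sets.

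Next I would set up the limit. Because $\sP^1(\bC_p)$ is compact (and, in general, not metrizable), it suffices to fix a net $(\ep_\alpha,\zeta_\alpha)\to(\ep,\zeta)$, pass to an arbitrary convergent subnet $\pi_{\ep_\alpha}(\zeta_\alpha)\to\xi$, and prove $\xi=\pi_\ep(\zeta)$. The case $\zeta=\infty$ is immediate: $\pi_{\ep'}(\infty)=\infty$ for all $\ep'$, and since $\pi_{\ep_\alpha}(\zeta_\alpha)\in[\zeta_\alpha,\infty]$ while the neighborhoods $\{|T|_\xi>R\}$ of $\infty$ are upward closed, $\zeta_\alpha\to\infty$ forces $\pi_{\ep_\alpha}(\zeta_\alpha)\to\infty$.

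So assume $\zeta\ne\infty$ and set $\delta=\max(\ep,\diam(\zeta))<\infty$. Since $\pi_{\ep_\alpha}(\zeta_\alpha)\in[\zeta_\alpha,\infty]$ and $R$ is closed, $\xi\in[\zeta,\infty]$; in particular $\diam(\xi)\ge\diam(\zeta)$. Also $\diam(\pi_{\ep_\alpha}(\zeta_\alpha))\ge\ep_\alpha$, so upper semicontinuity of $\diam$ gives $\diam(\xi)\ge\limsup_\alpha\ep_\alpha=\ep$; hence $\diam(\xi)\ge\delta$. For the opposite inequality, suppose $\diam(\xi)>\delta$. Choose $\eta$ on the nonempty open sub-arc $(\pi_\ep(\zeta),\xi)\subseteq[\zeta,\infty]$ with $\delta<\diam(\eta)<\diam(\xi)$ and $\diam(\eta)$ finite; then $\eta$ is an interior point of the arc $[\zeta,\infty]$, so it separates $\zeta$ from $\infty$, and the components $W\ni\zeta$ and $V\ni\infty$ of $\sP^1(\bC_p)\setminus\{\eta\}$ are open and distinct, with $\xi\in V$. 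For all large $\alpha$ we then have $\zeta_\alpha\in W$ and $\pi_{\ep_\alpha}(\zeta_\alpha)\in V$, so the geodesic $[\zeta_\alpha,\infty]$ must pass through $\eta$ and $\pi_{\ep_\alpha}(\zeta_\alpha)$ lies strictly on the $\infty$-side of $\eta$ along it; since $\diam$ increases strictly along $[\zeta_\alpha,\infty]$ toward $\infty$, this yields $\max(\ep_\alpha,\diam(\zeta_\alpha))=\diam(\pi_{\ep_\alpha}(\zeta_\alpha))>\diam(\eta)$ for all large $\alpha$. But $\limsup_\alpha\max(\ep_\alpha,\diam(\zeta_\alpha))\le\max(\ep,\diam(\zeta))=\delta$ by $\ep_\alpha\to\ep$ and upper semicontinuity of $\diam$ at $\zeta$, contradicting $\diam(\eta)>\delta$. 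Hence $\diam(\xi)=\delta$, and as $\xi\in[\zeta,\infty]$ and $\diam|_{[\zeta,\infty]}$ is injective, $\xi=\pi_\ep(\zeta)$; this completes the subnet argument and the proof.

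The step I expect to be the real obstacle is the bound $\diam(\xi)\le\delta$: because $\diam$ is only upper semicontinuous, one cannot simply pass to the limit in $\diam(\pi_{\ep_\alpha}(\zeta_\alpha))=\max(\ep_\alpha,\diam(\zeta_\alpha))$, and the argument must be routed through the tree topology, using the separating point $\eta$ and the upward-closedness of the component of $\infty$. The remaining inputs — continuity of $\xi\mapsto|T-a|_\xi$, upper semicontinuity of $\diam$, local connectedness of $\sP^1(\bC_p)$, and the elementary structure of geodesics toward $\infty$ — are all standard (see \cite{FRL} and \cite{BakerRumelyBook}), so the content lies entirely in assembling them correctly.
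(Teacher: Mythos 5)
Your proof is correct, but it follows a genuinely different route from the paper. The paper's argument is a short, direct verification on a subbase: using the explicit description of $\pi_\ep^{-1}$ on Berkovich discs from \cite[Lemme 4.7]{FRL}, one computes $\pi^{-1}(U(z,r)) = [0,r)\times U(z,r)$ and $\pi^{-1}\big(\sP^1(\bC_p)\setminus \overline{U(z,r)}\big) = \big([0,r]\times B\big)\cup\big((r,\infty)\times \sP^1(\bC_p)\big)$ with $B = \sP^1(\bC_p)\setminus\overline{U(z,r)}$, and observes these are open in the product topology; continuity is immediate. You instead run a net/subnet compactness argument through the tree structure: you characterize $\pi_\ep(\zeta)$ as the unique point of $[\zeta,\infty]$ of diameter $\max(\ep,\diam(\zeta))$, prove closedness of the order relation via the linear kernels $\abs{T-a}_\xi$, get the lower bound $\diam(\xi)\geq\max(\ep,\diam(\zeta))$ from upper semicontinuity of $\diam$, and then, correctly identifying that upper semicontinuity alone cannot give the matching upper bound, obtain it by a separation argument with an intermediate point $\eta$ and the open components of $\sP^1(\bC_p)\setminus\{\eta\}$; all the steps check out, including the treatment of $\zeta=\infty$, $\ep=0$, type IV points, and the possibility $\xi=\infty$. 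What the paper's approach buys is brevity and independence from compactness or the diameter formalism, at the price of invoking the precise preimage computation of \cite{FRL}; what your approach buys is that it never uses that lemma and works from general structural facts about the Berkovich line (local connectedness, closedness of the partial order, monotonicity of $\diam$ along paths to $\infty$), so it would adapt to other retraction-type maps defined by an upper semicontinuous threshold condition, but it is considerably heavier and leans on several ``standard facts'' that the subbase computation avoids entirely.
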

\begin{proof}
 As in the proof of the continuity of $\pi_\ep$ in \cite[Lemme 4.7]{FRL}, we take advantage of the fact that a subbase for the Berkovich topology is given by sets of the form
 \[
  U(z,r) = \{ \zeta\in \sP^1(\bC_p) : \sup\{\zeta, z\} < r\}\quad{where}\quad z\in\bC_p,\ r > 0,
 \]
 and complements of closed discs:
 \[
  \sP^1(\bC_p) \setminus \overline{U(z,r)} = \{ \zeta\in \sP^1(\bC_p) : \sup\{\zeta, z\} > r\}\quad{where}\quad z\in\bC_p,\ r > 0.
 \]
 We start by proving that $\pi^{-1}(U(z,r))$ is open for every $z\in\bC_p$ and $r>0$. Notice that, by \cite[Lemme 4.7]{FRL}, $\pi_\ep^{-1}(U(z,r)) = U(z,r)$ if $\ep < r$, but $\pi_\ep^{-1}(U(z,r)) = \varnothing$ if $\ep \geq r$. It follows that
 \[
  \pi^{-1}(U(z,r)) = [0,r) \times U(z,r),
 \]
 which is clearly open in the product topology. On the other hand, if we let $B = \sP^1(\bC_p) \setminus \overline{U(z,r)}$ for some $z\in\bC_p$ and $r>0$, we see again by \cite[Lemme 4.7]{FRL} that $\pi_\ep^{-1}(B) = B$ if $0\leq \ep\leq r$ and $\pi_\ep^{-1}(B) = \sP^1(\bC_p)$ if $\ep>r$. It follows that 
 \[
  \pi^{-1}(B) = ([0,r]\times B) \cup ((r,\infty) \times \sP^1(\bC_p)),
 \]
 which is again an open set (note that it equals the union of two open sets, $[0,\infty)\times B$ and $(r,\infty)\times \sP^1(\bC_p)$).  It follows that $\pi$ is continuous.
\end{proof}

As we will use it in the next section, we introduce the notion of $\ep$-regularization of a measure.
\begin{defn}\label{defn:ep-reg-of-measure}
 Let $\rho$ be a measure on $\sP^1_y$ for a place $y$ of $\Qbar$ and let $\ep>0$ be given. If $y\mid \infty$, we define the $\ep$-regularization to be the measure $\rho_\ep$ given by requiring that for every continuous $f : \sP^1_y \ra \bR$,
 \[
  \int f\,d\rho_\ep = \int \left(\int_0^1 f(z + \ep e^{2\pi i t})\,dt\right) \,d\rho(z).
 \]
 If $y\nmid \infty$, then, following Favre and Rivera-Letelier \cite[\S 4.6]{FRL}, we define $\rho_\ep = (\pi_\ep)_*(\rho)$.
\end{defn}
\begin{rmk}
It will be clear to the reader that both of our regularizations are inspired by \cite{FRL}, however, the archimedean regularization differs in that we avoided convolving with a smooth function. As a result, our potential function will not be smooth but will still be continuous, which is all we require; further, when approximating discrete measures, we will avoid an extraneous term which appears in the estimates of \cite{FRL}. We refer the reader to the discussion in \cite[\S 2.1]{F-P-quant} for more details.

We also note that our regularization fails to smooth the potentials of point masses at $\infty\in\bP^1(\bC_y)$. Indeed, it is clear from the definition of our normalization that $(\delta_\infty)_\ep = \delta_\infty$. This was also true of the regularization in \cite{FRL}, but just as in their work, in our equidistribution results, the resulting measures will never charge a single classical point (that is, a point of $\bP^1(\bC_y)$), so we will be able to ignore the point at infinity.
\end{rmk}

We state the basic convergence lemma:
\begin{lemma}\label{lemma:convergence-of-regularizations}
 Let $\rho$ be a signed Borel measure on $\sP^1_y$. Then $\rho_\ep \ra \rho$ as $\ep\ra 0^+$ in the sense of weak convergence of measures. Further, if $\rho_n$ is a sequence of signed Borel measures such that $\rho_n \ra \rho$ weakly, then $(\rho_n)_\ep \ra \rho_\ep$ weakly as $\ep\ra 0^+$ as well.
\end{lemma}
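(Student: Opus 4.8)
The plan is to verify weak convergence directly by pairing with an arbitrary continuous function $f$ on $\sP^1_y$, and in each case to use Definition~\ref{defn:ep-reg-of-measure} to rewrite $\int f\,d\rho_\ep$ as the integral \emph{against $\rho$ itself} of a modified test function — $f\circ\pi_\ep$ at a non-archimedean place, and $f_\ep(z) := \int_0^1 f(z+\ep e^{2\pi i t})\,dt$ (with $f_\ep(\infty)=f(\infty)$) at an archimedean place. Throughout I would fix the Jordan decomposition $\rho=\rho^+-\rho^-$ into finite positive Borel measures, so that $\abs{\rho}(\sP^1_y)<\infty$; both assertions of the lemma then reduce to showing that the modified test functions converge appropriately and that the error is controlled by $\abs{\rho}(\sP^1_y)$ times a sup-norm.

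For the non-archimedean case of the first assertion, $\rho_\ep=(\pi_\ep)_*\rho$ gives $\int f\,d\rho_\ep=\int (f\circ\pi_\ep)\,d\rho$. The key input is that $\pi_0=\mathrm{id}$ (the minimal point of diameter $\geq 0$ on the arc from $\zeta$ to $\infty$ is $\zeta$ itself), so by Lemma~\ref{lemma:retraction-is-two-var-cts} (continuity of $(\ep,\zeta)\mapsto\pi_\ep(\zeta)$ at $\ep=0$) one has $\pi_\ep(\zeta)\to\zeta$ in the Berkovich topology for every $\zeta$, hence $f\circ\pi_\ep\to f$ pointwise with $\abs{f\circ\pi_\ep}\leq\|f\|_\infty$. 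Dominated convergence against the finite measures $\rho^\pm$ then yields $\int f\,d\rho_\ep\to\int f\,d\rho$. For the archimedean case I would instead argue via uniform convergence: since the chordal distance between $z$ and $z+\ep e^{2\pi i t}$ is $O(\ep)$ uniformly in $z\in\bC$ and $t\in[0,1]$ (and vanishes at $z=\infty$), uniform continuity of $f$ on the compact space $\bP^1(\bC)$ gives $\|f_\ep-f\|_\infty\to 0$, whence $\bigl|\int f\,d\rho_\ep-\int f\,d\rho\bigr|\leq\|f_\ep-f\|_\infty\,\abs{\rho}(\sP^1_y)\to 0$.

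For the second assertion I would fix $\ep>0$ and observe that the modified test functions are themselves \emph{continuous}: $f\circ\pi_\ep$ because $\pi_\ep$ is continuous (\cite[Lemme 4.7]{FRL}, or Lemma~\ref{lemma:retraction-is-two-var-cts}), and $f_\ep$ because $(z,t)\mapsto f(z+\ep e^{2\pi i t})$ is continuous on $\bP^1(\bC)\times[0,1]$ so integrating out $t$ leaves a continuous function of $z$. Hence if $\rho_n\to\rho$ weakly, then $\int f\,d(\rho_n)_\ep=\int(f\circ\pi_\ep)\,d\rho_n\to\int(f\circ\pi_\ep)\,d\rho=\int f\,d\rho_\ep$ (respectively with $f_\ep$ in the archimedean case), which is exactly $(\rho_n)_\ep\to\rho_\ep$ weakly. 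The only step that requires any care is the archimedean regularization near $\infty$ — one must use the chordal rather than the Euclidean metric to get uniform control there — and verifying that $f_\ep$ is continuous and that the convention $f_\ep(\infty)=f(\infty)$ is consistent with the normalization $(\delta_\infty)_\ep=\delta_\infty$; everything else is a routine application of dominated convergence and finiteness of $\abs{\rho}(\sP^1_y)$, so I do not anticipate a genuine obstacle.
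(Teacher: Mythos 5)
Your proof is correct. For the archimedean place your argument is essentially the paper's: rewrite $\int f\,d\rho_\ep$ as $\int f_\ep\,d\rho$ (resp.\ $\int f_\ep\,d\rho_n$), note $f_\ep$ is continuous, and conclude; your version is in fact slightly more careful than the paper's, since you make the uniform estimate $\norm{f_\ep - f}_\infty \to 0$ via the chordal metric explicit, record the convention at $\infty$ consistent with $(\delta_\infty)_\ep = \delta_\infty$, and insert the total-variation bound $\abs{\rho}(\sP^1_y)<\infty$, points the paper passes over with ``bring the limit inside the integral.'' The genuine difference is the non-archimedean case: the paper does not prove it at all but simply cites \cite[Lemme 4.8]{FRL}, whereas you give a self-contained argument from the pushforward description $\rho_\ep=(\pi_\ep)_*\rho$, the observation $\pi_0=\mathrm{id}$, the joint continuity of $(\ep,\zeta)\mapsto\pi_\ep(\zeta)$ from Lemma~\ref{lemma:retraction-is-two-var-cts}, and dominated convergence against the finite measures $\rho^\pm$; for the second assertion you use continuity of $f\circ\pi_\ep$ for fixed $\ep$, which is exactly parallel to the archimedean argument. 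What your route buys is independence from the external reference and a direct use of the two-variable continuity lemma the paper proves but then does not exploit here; what the paper's route buys is brevity. Both are sound, and your reading of the second assertion (fix $\ep>0$, let $n\to\infty$) matches the paper's intent.
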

\begin{proof}
 In the non-archimedean case, this is exactly \cite[Lemme 4.8]{FRL}, so we will not reprove it here. The archimedean case is, similarly to \cite[Lemme 2.7]{FRL}, trivial, but we will write it out for the sake of completeness. We identify $\sP^1_y$ with $\bP^1(\bC)$ and let a continuous real-valued function $f\in C(\bP^1(\bC))$ be given. We will show that 
 \[
  \int f \,d\rho_\ep \ra \int f\,d\rho.
 \]
 Note that by construction, 
 \[
  \int f \,d\rho_\ep = \int \left(\int_0^1 f(z + \ep e^{2\pi t})\,dt\right) \,d\rho(z)\ra \int f \,d\rho
 \]
 as $\ep\ra 0^+$, since the function $f$ is continuous on a compact space and therefore we can bring the limit inside the integral. Since this holds for each continous function, $\rho_\ep \ra \rho$ in the weak sense of measures. Now suppose that $\rho_n\ra \rho$ weakly, and fix $\ep>0$. We wish to show that $(\rho_n)_\ep \ra \rho_\ep$. Again, it suffices to show that for a given $f\in C(\bP^1(\bC))$,
 \[
  \int f \,d(\rho_n)_\ep \ra \int f \,d\rho_\ep \quad\text{as}\quad n \ra \infty. 
 \]
 But
 \[
  \int f \,d(\rho_n)_\ep = \int \left(\int_0^1 f(z + \ep e^{2\pi t})\,dt\right) \,d\rho_n(z) = \int f_\ep \, d\rho_n,
 \]
 where 
 \[
  f_\ep(z) = \int_0^1 f(z + \ep e^{2\pi t}).
 \]
 But $f_\ep$ is again a continuous function, so since $\rho_n\ra\rho$ weakly,
 \[
  \int f_\ep \, d\rho_n \ra \int f_\ep \, d\rho = \int f\,d\rho_\ep,
 \]
 where the last equality follows by the definition of $\rho_\ep$.
\end{proof}

\begin{lemma}\label{lemma:reg-has-cts-potential}
 Let $y$ be a place of $\Qbar$ and $\Delta$ be a discrete probability measure on $\sA^1_y$, given by
 \[
  \Delta = \sum_i t_i \delta_{\alpha_i},
 \]
 where $\sum_i t_i = 1$ and $\alpha_i\in \bC_y$, satisfying 
 \begin{equation}\label{eqn:local-finite-height}
  \sum_i t_i \log^+\,\abs{\alpha_i}_y < \infty.
 \end{equation}
  Then for any $0<\ep\leq 1$, the $\ep$-regularization $\Delta_\ep$ admits a continuous potential with respect to the standard measure $\lambda_y$, that is, there is a continuous function $g_y: \sP^1_y \ra \bR$ such that $g_y(\infty)=0$ and 
  \[\Delta g_y = \Delta_{\ep} - \lambda_y.\]
 \end{lemma}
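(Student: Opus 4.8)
The plan is to reduce to the case of a single point mass and then add up, using the finite height hypothesis \eqref{eqn:local-finite-height} to make the resulting series of potentials converge uniformly. The first observation is that, in both the archimedean and the non-archimedean case, the $\ep$-regularization of a point mass $\delta_\alpha$ (for $\alpha\in\bC_y$) has the \emph{same} logarithmic potential, namely
\[
 p_{(\delta_\alpha)_\ep}(z) = \int_{\sP^1_y}\log\,\abs{z-w}_y\,d(\delta_\alpha)_\ep(w) = \log\,\max\{\abs{z-\alpha}_y,\ep\}\qquad(z\in\bC_y).
\]
When $y\mid\infty$ this is the classical computation of the potential of normalized arc length on the circle of radius $\ep$ centered at $\alpha$; when $y\nmid\infty$ we have $(\delta_\alpha)_\ep=\delta_{\pi_\ep(\alpha)}$, and the right-hand side is the value of the Hsia kernel $\delta(z,\pi_\ep(\alpha))_\infty$, since $\pi_\ep(\alpha)$ is the point corresponding to the disc of radius $\ep$ about $\alpha$. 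Accordingly set
\[
 g_\alpha(z) = \log\,\max\{\abs{z-\alpha}_y,\ep\}-\log^+\,\abs{z}_y .
\]
Because $\ep>0$, the first term extends to a finite continuous function on all of $\sP^1_y$ (cf.\ \cite[\S 4.6]{FRL} in the non-archimedean case), so $g_\alpha$ is continuous on $\sP^1_y$; comparing asymptotics at $\infty$ gives $g_\alpha(\infty)=0$; and since $\Delta\log\,\max\{\abs{z-\alpha}_y,\ep\}=(\delta_\alpha)_\ep-\delta_\infty$ while $\Delta\log^+\,\abs{z}_y=\lambda_y-\delta_\infty$, we get $\Delta g_\alpha=(\delta_\alpha)_\ep-\lambda_y$.

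The heart of the argument is the uniform bound
\[
 \sup_{z\in\sP^1_y}\abs{g_\alpha(z)} \le -\log\ep+\log 2+\log^+\,\abs{\alpha}_y .
\]
For the upper bound I would use $\max\{\abs{z-\alpha}_y,\ep\}\le 2\max\{\abs{z}_y,1\}\max\{\abs{\alpha}_y,1\}$, which follows from $\ep\le 1$ together with the (archimedean or ultrametric) triangle inequality and the elementary inequality $a+b\le 2ab$ for $a,b\ge 1$. For the lower bound I would prove the reverse estimate $\max\{\abs{z-\alpha}_y,\ep\}\ge \frac{\ep}{2\max\{\abs{\alpha}_y,1\}}\,\max\{\abs{z}_y,1\}$ by a short case analysis according to whether $\abs{z}_y\le 1$, and, when $\abs{z}_y>1$, whether $\abs{z-\alpha}_y\ge\abs{\alpha}_y$; in the non-archimedean case the factors of $2$ may be dropped. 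The important point is that the bound depends on $\alpha$ only through $\log^+\,\abs{\alpha}_y$, which is exactly the quantity that hypothesis \eqref{eqn:local-finite-height} controls.

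Finally, set $g_y=\sum_i t_i\,g_{\alpha_i}$. By the previous estimate, $\sum_i t_i\,\norm{g_{\alpha_i}}_{\sup}\le -\log\ep+\log 2+\sum_i t_i\log^+\,\abs{\alpha_i}_y<\infty$, so by the Weierstrass $M$-test the series converges uniformly on the compact space $\sP^1_y$; hence $g_y$ is continuous and $g_y(\infty)=\sum_i t_i\,g_{\alpha_i}(\infty)=0$. For the Laplacian, the partial sums $g_y^{(N)}=\sum_{i\le N}t_i\,g_{\alpha_i}$ converge uniformly to $g_y$, while $\Delta g_y^{(N)}=\sum_{i\le N}t_i\big((\delta_{\alpha_i})_\ep-\lambda_y\big)$ converges weakly to $\Delta_\ep-\lambda_y$, using $\sum_i t_i=1$, the definition of $\Delta_\ep$, and (in the non-archimedean case) the weak continuity of regularization from Lemma \ref{lemma:convergence-of-regularizations}. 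Since the Laplacian is continuous for uniform convergence of potentials and weak convergence of measures (test against a continuous function and pass to the limit), we conclude $\Delta g_y=\Delta_\ep-\lambda_y$, as required. The main obstacle is establishing the uniform estimate on $g_\alpha$ with its $\alpha$-dependence only through $\log^+\,\abs{\alpha}_y$; once that is in hand, the remaining steps are routine.
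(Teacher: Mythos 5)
Your proof is correct and follows essentially the same route as the paper: write $\Delta_\ep$ as a weighted sum of regularized point masses, use the explicit potential $g_\alpha(z)=\log\max\{\abs{z-\alpha}_y,\ep\}-\log^+\abs{z}_y$, bound its sup norm with $\alpha$-dependence only through $\log^+\abs{\alpha}_y$, and conclude by the Weierstrass $M$-test (your extra paragraph passing the Laplacian through the uniform limit is a standard step the paper leaves implicit, e.g.\ via \cite[Prop.~5.32]{BakerRumelyBook}). Your sup bound, which carries the harmless fixed-$\ep$ term $-\log\ep$, is in fact slightly more careful than the bound $\sup_z\abs{g_i(z)}\le\log 2+\log^+\abs{\alpha_i}_y$ stated in the paper, which ignores the dip of $g_i$ to about $\log\ep$ near $\alpha_i$; this does not affect the argument since $\ep$ is fixed.
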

 \noindent Notice that we have assumed that $\Delta$ does not charge $\infty\in \bP^1(\bC_y)$. This assumption is primarily for our convenience at the moment and we will see that it is not relevant in the proof of the equidistribution theorem, as measures of small height will not converge to measures that charge a single point like $\infty\in \bP^1(\bC_y)$. It is possible to introduce a regularization that allows for continuous potentials when $\Delta$ has support at $\infty$ as well, however, this would complicate our analysis and is unnecessary, so we do not do it here.
\begin{proof}
 First we recall for the reader's convenience that, for any $\ep>0$, the $\ep$-regularization of $\Delta$ is given by
 \[
  \Delta_{\ep} = \sum_{i} t_i \delta_{\alpha_i,\ep},
 \]
 where if $y\nmid\infty$, $\delta_{\alpha_i,\ep}=(\delta_{\alpha_i})_\ep$ denotes the point mass at $\pi_\ep(\alpha_i)=\zeta_{\alpha_n,\ep}$ and is the Favre-Rivera-Letelier smoothing of the measure, while if $y\mid \infty$, we let $\delta_{\alpha_i,\ep}=(\delta_{\alpha_i})_\ep$ denote the normalized arc length measure around the circle $\p D(\alpha_i,\ep)$ in $\bC$. We note that each individual measure $\delta_{\alpha_i,\ep}$ admits a continuous potential with respect to the standard measure $\lambda_y$ given by
 \[
  g_i(z) = \log\max\{\abs{z-\alpha_i}_y, \ep\} 
  - \log^+\, \abs{z}_y,
 \]
 where in the case of $y$ non-archimedean, we recall that $\abs{z-\alpha_i}_y$ is really the Hsia kernel $\delta_\infty(z,\alpha_i)$, or $\sup\{z,\alpha_i\}$ in the notation of Favre and Rivera-Letelier. Notice that
 \[
  \sup_{z\in\sP^1_y} \abs{g_i(z)} \leq \log(1 + \abs{\alpha_i}_y)\leq \log 2 + \log^+\,\abs{\alpha_i}_y.
 \]
 Define
 \[
  g(z) = \sum_i t_i g_i(z) = \sum_i t_i \log\max\{\abs{z-\alpha_i}_y, \ep\}
  - \log^+\, \abs{z}_y.
 \]
 Then
 \[
 \sum_i   t_i \abs{g_i(z)} \leq \log 2 + \sum_i t_i\log^+\,\abs{\alpha_i}_y < \infty
 \]
 by our assumption on $\Delta$. It follows by the Weierstrass $M$-test that $g\in C(\sP^1_y)$, as claimed.
\end{proof}

\begin{lemma}\label{lemma:lower-bound-on-Delta-n-height-pairing}
 Let $\rho$ be a generalized adelic measure and $\Delta$ be a discrete probability  measure on $\Qbar$ with finite height given by
 \[
  \Delta = \sum_{n=1}^\infty t_n \delta_{\alpha_n},
 \]
 where $\alpha_n\in \bA^1(\Qbar)$ are all distinct and $\sum_n t_n = 1$. Then for $\mu$-almost every $y\in Y$, there exists a continuous function $\eta_y : [0,\infty)\ra [0,\infty)$ such that $\eta(0)=0$, and that for all $\ep\in (0,1]$, the $\ep$-regularization $\Delta_\ep$ of $\Delta$ satisfies 
 \begin{equation}\label{eqn:Delta-pairing-vs-Delta-ep}
  (\rho_y - \Delta, \rho_y-\Delta)_y - (\rho_y - \Delta_\ep, \rho_y-\Delta_\ep)_y \geq -2 \eta_y(\ep) + \sum_{n=1}^\infty t_n^2 \log \ep
 \end{equation}
 and in particular,
  \begin{equation}\label{eqn:lower-bound-pairing-approx}
  (\rho_y - \Delta, \rho_y-\Delta)_y \geq -2 \eta_y(\ep) + \sum_{n=1}^\infty t_n^2 \log \ep
 \end{equation}
 for almost all $y\in Y(\bQ,p)$. Further, the functions $\eta_y$ depend on $y$ and on $\rho$, but not on $\Delta$.
\end{lemma}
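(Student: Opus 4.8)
The plan is as follows. Throughout I fix a place $y$ lying in the $\mu$-conull subset of $Y$ on which $g_y$ is continuous (hence bounded, $C(y)=\sup_{z\in\sP^1_y}\abs{g_y(z)}<\infty$) and on which $\sum_n t_n\log^+\abs{\alpha_n}_y<\infty$; the latter holds for $\mu$-a.e.\ $y$ because $\Delta$ has finite height, so that $\int_Y\sum_n t_n\log^+\abs{\alpha_n}_y\,d\mu(y)<\infty$ and its integrand is finite $\mu$-a.e. On this set the hypothesis \eqref{eqn:local-finite-height} of Lemma~\ref{lemma:reg-has-cts-potential} is met, so each $\Delta_\ep$ with $\ep\in(0,1]$ admits a continuous potential; since $\rho_y$ does as well, the positivity of the energy pairing on differences of probability measures with continuous potentials (recalled in Section~\ref{sec:gen-adelic}) gives $(\rho_y-\Delta_\ep,\rho_y-\Delta_\ep)_y\geq 0$, so \eqref{eqn:lower-bound-pairing-approx} will follow at once from \eqref{eqn:Delta-pairing-vs-Delta-ep}. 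To prove \eqref{eqn:Delta-pairing-vs-Delta-ep} I first dispose of the case $(\rho_y-\Delta,\rho_y-\Delta)_y=+\infty$, in which there is nothing to prove since the right-hand side of \eqref{eqn:Delta-pairing-vs-Delta-ep} is finite ($\sum_n t_n^2\leq\sum_n t_n=1$, so $\sum_n t_n^2\log\ep\in[\log\ep,0]$); and by Lemma~\ref{lemma:integrability-of-delta-heights} the pairing is never $-\infty$ on our set. So I may assume it is finite, in which case every sum below converges, its negative part being dominated by the summable quantities $\sum_n t_n\log^+\abs{\alpha_n}_y$ and $\chi_\infty(y)\log 2$ exactly as in the proof of Lemma~\ref{lemma:lower-bds-on-standard-pairings}.

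Next I expand the difference of pairings bilinearly:
\[
 (\rho_y-\Delta,\rho_y-\Delta)_y-(\rho_y-\Delta_\ep,\rho_y-\Delta_\ep)_y=2(\rho_y,\Delta_\ep-\Delta)_y+\big((\Delta,\Delta)_y-(\Delta_\ep,\Delta_\ep)_y\big).
\]
For the mixed term, Lemma~\ref{lemma:p-is-g-plus-log} gives the potential $p_{\rho_y}=g_y+\log^+\abs{\cdot}_y$, whence
\[
 2(\rho_y,\Delta_\ep-\Delta)_y=-2\sum_n t_n\Big(\int p_{\rho_y}\,d\delta_{\alpha_n,\ep}-p_{\rho_y}(\alpha_n)\Big),
\]
which I split into a \emph{$g_y$-part} $-2\sum_n t_n\big(\int g_y\,d\delta_{\alpha_n,\ep}-g_y(\alpha_n)\big)$ and a \emph{$\log^+$-part} $-2\sum_n t_n\big(\int\log^+\abs{z}_y\,d\delta_{\alpha_n,\ep}(z)-\log^+\abs{\alpha_n}_y\big)$. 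The $g_y$-part has absolute value at most $2\,\omega_y(\ep)$, where $\omega_y$ is a continuous, nondecreasing majorant of the modulus of continuity of $g_y$ for the spherical metric on the compact space $\sP^1_y$ (so $\omega_y(0)=0$); this uses that $\supp\delta_{\alpha_n,\ep}$ --- the circle $\{\abs{z-\alpha_n}_y=\ep\}$ when $y\mid\infty$, the single Berkovich point $\zeta_{\alpha_n,\ep}$ when $y\nmid\infty$ --- lies within spherical distance $\leq\ep$ of $\alpha_n$, uniformly in $n$, together with $\sum_n t_n=1$. Crucially, $\omega_y$ depends only on $g_y$, hence only on $\rho$ and $y$. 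The $\log^+$-part vanishes identically when $y\nmid\infty$, since there $\int\log^+\abs{z}_y\,d\delta_{\alpha_n,\ep}(z)=\log^+\max\{\abs{\alpha_n}_y,\ep\}=\log^+\abs{\alpha_n}_y$ because $\ep\leq 1$; when $y\mid\infty$ it has absolute value at most $2\sum_n t_n\log(1+\ep)\leq 2\ep$, via the elementary inequality $\abs{\log^+\abs{a}-\log^+\abs{b}}\leq\log(1+\abs{a-b})$. Altogether,
\[
 2(\rho_y,\Delta_\ep-\Delta)_y\geq-2\,\omega_y(\ep)-2\chi_\infty(y)\,\ep.
\]

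For the remaining term I use that, by the proof of Lemma~\ref{lemma:reg-has-cts-potential}, the potential of $\delta_{\alpha_n,\ep}$ is $z\mapsto\log\max\{\abs{z-\alpha_n}_y,\ep\}$, the kernel being read as the Hsia kernel when $y\nmid\infty$. A direct evaluation yields $(\delta_{\alpha_n,\ep},\delta_{\alpha_n,\ep})_y=-\log\ep$ and, for $m\neq n$, the bound
\[
 (\delta_{\alpha_m,\ep},\delta_{\alpha_n,\ep})_y\leq-\log\abs{\alpha_m-\alpha_n}_y,
\]
which holds with equality to $-\log\max\{\abs{\alpha_m-\alpha_n}_y,\ep\}$ when $y\nmid\infty$, and when $y\mid\infty$ follows from the mean-value identity $\int_0^1\log\abs{c+\ep e^{2\pi is}}\,ds=\log\max\{\abs{c},\ep\}$ together with $\max\{t,\ep\}\geq t$. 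Hence
\[
 (\Delta,\Delta)_y-(\Delta_\ep,\Delta_\ep)_y=\sum_n t_n^2\log\ep+\sum_{m\neq n}t_m t_n\big(-\log\abs{\alpha_m-\alpha_n}_y-(\delta_{\alpha_m,\ep},\delta_{\alpha_n,\ep})_y\big)\geq\sum_n t_n^2\log\ep,
\]
the double sum consisting of nonnegative terms. Combining with the previous display and setting $\eta_y(\ep):=\omega_y(\ep)+\chi_\infty(y)\,\ep$, which is continuous, vanishes at $\ep=0$, and depends only on $\rho$ and $y$, gives \eqref{eqn:Delta-pairing-vs-Delta-ep}.

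The step I expect to be the main obstacle is ensuring that $\eta_y$ is genuinely independent of $\Delta$. This succeeds precisely because the only $\Delta$-dependent contributions to the difference of pairings are the off-diagonal ``discriminant-type'' quantities $-\log\abs{\alpha_m-\alpha_n}_y-(\delta_{\alpha_m,\ep},\delta_{\alpha_n,\ep})_y$, which are \emph{nonnegative} and may simply be discarded, and the diagonal self-energy $\sum_n t_n^2\log\ep$, which is pulled out explicitly; what remains is controlled purely by the modulus of continuity of the potential $g_y$ of $\rho_y$ (plus a harmless term linear in $\ep$ at the archimedean places). A secondary point needing care is the bookkeeping of potentially infinite quantities, handled by treating the case $(\rho_y-\Delta,\rho_y-\Delta)_y=+\infty$ at the outset and invoking the lower bound of Lemma~\ref{lemma:integrability-of-delta-heights}.
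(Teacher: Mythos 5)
Your argument is correct and is essentially the paper's proof: the same bilinear decomposition into the mixed term and $(\Delta,\Delta)_y-(\Delta_\ep,\Delta_\ep)_y$, the same Robin-constant computation $(\delta_{\alpha_n,\ep},\delta_{\alpha_n,\ep})_y=-\log\ep$ with nonnegative off-diagonal differences, the same splitting of $p_{\rho_y}=g_y+\log^+\abs{\cdot}_y$, and the same choice of $\eta_y$ as a modulus-of-continuity term plus $\chi_\infty(y)\ep$. The only point to tighten is your non-archimedean claim that $\zeta_{\alpha_n,\ep}$ lies within ``spherical distance $\le\ep$'' of $\alpha_n$: there is no canonical spherical metric on the Berkovich line, and the paper instead bounds the $g_y$-part directly by $\sup_{\zeta}\abs{g_y(\pi_\ep(\zeta))-g_y(\zeta)}$, using the joint continuity of $(\ep,\zeta)\mapsto\pi_\ep(\zeta)$ from Lemma~\ref{lemma:retraction-is-two-var-cts} (equivalently, any metric inducing the Berkovich topology together with $\sup_\zeta$-uniform convergence $\pi_\ep\to\mathrm{id}$) to get continuity in $\ep$ and vanishing at $0$.
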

\noindent We note that our lemma is inspired by Propositions 5 and 6 in Section 2 of \cite{F-P-quant}, which itself was based closely on Propositions 2.8 and 4.9 of \cite{FRL} but uses a slightly different regularization of the discrete measures at the archimedean places.
\begin{proof}
 Let $N = \{y\in Y : C(y)=\infty\}$, where $C(y)$ is the bounding function associated to our generalized adelic measure $\rho$ in Definition \ref{defn:gen-adelic-measure}. We have by assumption that $C(y)\in L^1(Y)$, so in particular, $\mu(N)=0$. We note that by the condition that $\Delta$ has finite height,
 \[
  \int_Y \int_{\sA^1_y} \log^+\,\abs{z}_y\,d\Delta(z) \,d\mu(y) < \infty,
 \]
 and it follows that for almost all $y\in Y$, 
 \[
  \int_{\sA^1} \log^+\,\abs{z}_y \,d\Delta(z) < \infty.
 \]
 This condition will be necessary to apply \ref{lemma:reg-has-cts-potential} and conclude that $\Delta_\ep$ admits a continuous potential function, which will be vital to our proof. 
 
 Let $Y'\subset Y$ denote the set of places $y$ where the above integral is finite and $C(y)<\infty$. Notice that $\mu(Y\setminus Y') = 0$. We will prove the result for all $y\in Y'$. Let $y\in Y'$ be given. For each $\ep\in (0,1]$, let $\Delta_\ep$ be the $\ep$-regularization of the discrete measure $\Delta$ as defined above. We write $\Delta_\ep$ explicitly as 
 \begin{equation}\label{eqn:epsilon-regularization-of-delta}
  \Delta_\ep = \sum_{n=1}^\infty t_n \delta_{\alpha_n,\ep},
 \end{equation}
 where, if $y\nmid\infty$, $\delta_{\alpha_n,\ep}$ denotes the point mass at $\pi_\ep(\alpha_n)=\zeta_{\alpha_n,\ep}$ and is the Favre and Rivera-Rivera smoothing of the measure, while if $y\mid \infty$, we let $\delta_{\alpha_n,\ep}$ denote the normalized arc length measure around the circle $\p D(\alpha_n,\ep)$ in $\bC$. (The restriction that $\ep\leq 1$ is only relevant in the non-archimedean setting.) We then proceed by writing
 \begin{equation}\label{eqn:main-ineq-for-delta-pairing-lb}
 \begin{split}
   &(\rho_y - \Delta, \rho_y-\Delta)_y 
  - (\rho_y - \Delta_\ep, \rho_y-\Delta_\ep)_y\\
  &\hspace{35mm} = -2(\rho_y, \Delta - \Delta_\ep)_y + (\Delta,\Delta)_y - (\Delta_\ep,\Delta_\ep)_y.
 \end{split}
 \end{equation}
 We now proceed by finding lower bounds for the terms on the right hand side. We start with the difference $(\Delta,\Delta)_y - (\Delta_\ep,\Delta_\ep)_y$, largely following the proofs of \cite[Prop. 5]{F-P-quant} and \cite[Lemme 4.11]{FRL}. We write
\[
 (\Delta,\Delta)_y = \sum_{i\neq j} -t_i t_j\log\,\abs{\alpha_i-\alpha_j}_y,
\]
and by the linearity of the energy pairing, 
\[
 (\Delta_\ep,\Delta_\ep)_y = \sum_{i,j} t_i t_j(\delta_{\alpha_i,\ep},\delta_{\alpha_j,\ep})_y. 
\]
 If $y\mid \infty$, then, following the idea proof of the proof of \cite[Lemme 2.10]{FRL}, then for $z\neq w\in\bC$,
\begin{align*}
 -(\delta_{z,\ep},\delta_{w,\ep}) &= \int_0^1\int_0^1 \log\abs{z + \ep\cdot e^{2\pi ir} - (w + \ep\cdot e^{2\pi is})}\,dr\,ds\\
 & = \int_0^1\max\{ \log\abs{z- (w + \ep\cdot e^{2\pi is})}, \log \ep\}\,ds\\
 &\geq \max\left\{  \int_0^1\log\abs{z- (w + \ep\cdot e^{2\pi is})}\,ds, \log \ep\right\}\\
 &\geq \max\{\log\abs{z-w},\log \ep\}\geq \log\abs{z-w} = -(\delta_z,\delta_{w}),
\end{align*}
 while if $y\nmid \infty$,
 \[
  \log\,\sup\{\zeta_{\alpha_i,\ep},\zeta_{\alpha_j,\ep}\} = \log\max\{ \ep, \abs{\alpha_i-\alpha_j}_y\} \geq \log \,\abs{\alpha_i-\alpha_j}_y,
 \]
 so that for all $i\neq j$ in the double sums,
 \[
  (\Delta,\Delta)_y - (\Delta_\ep,\Delta_\ep)_y \geq 0.
 \]
 Further, for the $i=j$ terms, $(\delta_{z,\ep},\delta_{z,\ep})_y$ is the Robin constant for the disc of radius $\ep$ for both non-archimedean and archimedean places, and in both cases, one obtains $(\delta_{z,\ep},\delta_{z,\ep})_y = -\log \ep$, so that
 \begin{equation}\label{eqn:7-1}
  (\Delta,\Delta)_y - (\Delta_\ep,\Delta_\ep)_y \geq \sum_{i=1}^\infty t_i^2 \log \ep.
 \end{equation}
 It remains to bound the term $-2(\rho_y, \Delta - \Delta_\ep)_y$. Our proof now breaks down into two cases. First, assume that $p\nmid \infty$ is a non-archimedean prime. Note that 
 \[
  (\rho_y, \Delta - \Delta_\ep)_y = \sum_{i=1}^\infty t_i \left(p_{\rho_y}(\alpha_i) - p_{\rho_y}(\zeta_{\alpha_i,\ep})\right),
 \]
 where $p_{\rho_y}(z)$ is the potential associated to $\rho_y$. By Lemma \ref{lemma:p-is-g-plus-log}, we know that $p_{\rho_y}(z) = g_y(z) + \log^+\,\abs{z}_p$. Now, since $g_y(z) = g(y,z)$ is assumed to be continuous as a map $Y(\bQ,p)\times \sP^1(\bC_p)\ra\bR$, we can define
 \[
  \eta_y(\ep) = \sup_{\zeta\in\sP^1_p} \abs{g_y(\pi_\ep(\zeta)) - g_y(\zeta)} , 
 \]
 and it follows by continuity of all of the variables involved and the compactness of the spaces that these suprema exist and are attained, that the resulting function is continuous in $\ep$, and, further, that $\eta(0) = 0$. Further, 
 \begin{align*}
  \abs{p_{\rho_y}(\alpha_i) - p_{\rho_y}(\zeta_{\alpha_i,\ep})} &\leq 
  \abs{g_y(\alpha_i) - g_y(\zeta_{\alpha_i,\ep})} + \abs{\log^+ \abs{\alpha_i}_p - \log^+\,\abs{\zeta_{\alpha_i,\ep}}}\\
    &\leq \eta_y(\ep),
 \end{align*}
 since $\log^+\,\abs{\zeta_{z, \ep}}_p = \log^+\,\abs{z}_p$ whenever $\ep \leq 1$. It follows that 
\begin{equation}\label{eqn:7-2}
 \abs{(\rho_y, \Delta - \Delta_\ep)_y} \leq \sum_{i=1}^\infty t_i \eta_y(\ep) = \eta_y(\ep).
\end{equation}
 
 For the second case, suppose that $y\mid \infty$. Let $\hat\eta_y : [0,\infty)\ra [0,\infty)$ be the function given by
 \[
  \hat\eta_y(\ep) =\sup_{\substack {z,w \in\bP^1(\bC)\\ \sigma(z,w)\leq r} } \abs{g_y(z) - g_y(w)} ,
 \]
 where $\sigma$ denotes the spherical metric on $\bP^1(\bC)$. Again, by the continuity of $g_y$ and compactness of the space $\sP^1_y$, $\hat{\eta}_y(\ep)$ is a continuous function and works as a uniform modulus of continuity for the potential functions over all places $y\mid \infty$. Let $\eta_y(\ep) = \hat\eta_y(\ep) + \ep$. Then for $z\in \bC$, 
  \begin{align*}
  \bigg|p_{\rho_y}(z) - \int p_{\rho_y}d\delta_{z,\ep}\bigg| &\leq 
  \bigg|g_y(z) - \int g_y d\delta_{z,\ep}\bigg| + \bigg|\int \log^+\,\abs{w}\,d\delta_{z,\ep}(w) - \log^+\,\abs{z}\bigg|\\
    &\leq \hat \eta_y(\ep)  + \ep = \eta_y(\ep),
 \end{align*}
where we have used  
 \[
 \bigg|\int_0^1 \log^+\,\abs{z + \ep \cdot e^{2\pi i t}}\,dt - \log^+\,\abs{z}\bigg| \leq \ep 
 \]
 to bound the second term, from which we again derive \eqref{eqn:7-2}, this time in the archimedean case. Combining equation \eqref{eqn:main-ineq-for-delta-pairing-lb} with \eqref{eqn:7-1} and \eqref{eqn:7-2} gives the first part of the conclusion, equation \eqref{eqn:Delta-pairing-vs-Delta-ep} above. 
 
 For the second part of the conclusion,  we note that by \cite[Prop. 2.6, Prop. 4.5]{FRL}, as $\Delta_\ep$ admits a continuous potential by Lemma \ref{lemma:reg-has-cts-potential}, we have
 \[
  (\rho_y - \Delta_\ep, \rho_y-\Delta_\ep)_y \geq 0.
 \]
 Therefore \eqref{eqn:main-ineq-for-delta-pairing-lb} yields
 \begin{equation}\label{eqn:lower-bound-on-pairing}
 (\rho_y - \Delta, \rho_y-\Delta)_y 
 \geq -2(\rho_y, \Delta - \Delta_\ep)_y + (\Delta,\Delta)_y - (\Delta_\ep,\Delta_\ep)_y,
 \end{equation}
 which, combined with \eqref{eqn:7-1} and \eqref{eqn:7-2} gives the second conclusion \eqref{eqn:lower-bound-pairing-approx}. 
\end{proof}

The previous lemma defines a condition for the local pairings to be nonnegative in the limit, to which we will give a name:
\begin{defn}\label{defn:well-distributed}
 A sequence of discrete probability measures $(\Delta_n)_{n\in\bN}$ on a topological space is \emph{well-distributed} if 
  \begin{equation}\label{eqn:Delta-n-well-distr-defn}
  \sum_{z\,\in \, \supp(\Delta_n)} \Delta_n(\{z\})^2 \ra 0\quad\text{as}\quad n\ra\infty.
 \end{equation}
  Notice that if the $\Delta_n$ are well-distributed, then we must have $\abs{\supp(\Delta_n)}\ra \infty$, and further, if the $\Delta_n$ are measures on $\Qbar$, this condition is independent of the particular embedding into $\sP^1_y$ for any place $y$ of $\Qbar$. 
\end{defn}
\begin{example}
Let $K/\bQ$ be a field, and for $n \in \N$ let $\alpha_n \in \Kbar$ and let $\Delta_n = [\al_n]_K$ be the probability measure equally supported on each $\Gal(\overline{K}/K)$-conjugate of $\al_n$.  If $[K(\al_n):K] \ra \infty$, then the sequence $(\Delta_n)_{n\in\bN}$ is well-distributed, as
\[
\sum_{z\,\in \, \supp(\Delta_n)} \Delta_n(\{z\})^2 = \sum_{i=1}^{[K(\alpha_n):K]} \frac{1}{[K(\alpha_n):K]^2} = \frac{1}{[K(\alpha_n):K]} \ra 0\quad\text{as}\quad n\ra\infty.
\]

 \end{example}

We will not use the definition of the well-distributed property directly in our proof of the equidistribution results, but rather we will use it in an equivalent but slightly more technical form:
\begin{lemma}\label{lemma:w-d-in-useful-form}
 If a sequence of discrete probability measures $(\Delta_n)_{n\in\bN}$ on a topological space is well-distributed, then there exists a sequence of real numbers $(\ep_n)_{n\in\bN}$ such that
 \[
  0 < \ep_n \leq 1\quad\text{for each}\quad n\in\bN, \quad 
  \ep_n \ra 0 \quad\text{as}\quad n\ra\infty,
 \]
and 
  \begin{equation}\label{eqn:Delta-n-condition}
  \sum_{z\,\in \, \supp(\Delta_n)} \Delta_n(\{z\})^2 \log \ep_n \ra 0\quad\text{as}\quad n\ra\infty.
 \end{equation}
\end{lemma}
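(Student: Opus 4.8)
The plan is purely elementary: the statement only concerns the sequence of nonnegative reals
\[
 s_n := \sum_{z\,\in\,\supp(\Delta_n)} \Delta_n(\{z\})^2,
\]
and it suffices to produce $\ep_n$ by an explicit formula in $s_n$. First I would record two trivial facts about $s_n$. Since $\Delta_n$ is a probability measure, $\sum_z \Delta_n(\{z\}) = 1$, and each $\Delta_n(\{z\})\in[0,1]$, so $\Delta_n(\{z\})^2 \le \Delta_n(\{z\})$; summing gives $0 < s_n \le 1$, the strict positivity coming from the fact that a sum of squares of nonnegative reals adding to $1$ cannot vanish. The well-distributed hypothesis \eqref{eqn:Delta-n-well-distr-defn} is exactly the assertion $s_n \to 0$ as $n\to\infty$.

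Then I would simply set $\ep_n = \exp\bigl(-s_n^{-1/2}\bigr)$, which is well-defined for every $n$ because $s_n > 0$. The three required properties are now immediate: from $0 < s_n \le 1$ we get $s_n^{-1/2} \ge 1$, hence $0 < \ep_n \le e^{-1} \le 1$; from $s_n \to 0^+$ we get $s_n^{-1/2}\to\infty$, hence $\ep_n \to 0$; and
\[
 s_n \log \ep_n = -\,s_n\cdot s_n^{-1/2} = -\sqrt{s_n} \longrightarrow 0,
\]
which is \eqref{eqn:Delta-n-condition}. This completes the argument.

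There is essentially no obstacle: the only point needing a moment's attention is the positivity $s_n > 0$, which is what makes the chosen formula for $\ep_n$ legitimate for all $n$. I note that the particular choice is inessential; any $\ep_n = \exp(-f(s_n))$ with $f(s)\to\infty$ and $s\,f(s)\to 0$ as $s\to 0^+$ would do, and $f(s) = s^{-1/2}$ is merely the simplest such function.
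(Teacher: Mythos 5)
Your argument is correct and is essentially the paper's proof: the paper likewise reduces everything to the scalar sequence $x_n=\sum_z \Delta_n(\{z\})^2$, notes $0<x_n\le 1$ and $x_n\to 0$, and makes an explicit choice of $\ep_n$ in terms of $x_n$ (it takes $\ep_n=x_n$ itself, using $x_n\log x_n\to 0$, whereas you take $\ep_n=\exp(-x_n^{-1/2})$). The difference is only in the choice of formula, as your closing remark already observes.
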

\begin{proof}
 For each $n\geq 1$ let
 \[
  x_n = \sum_{z\,\in \, \supp(\Delta_n)} \Delta_n(\{z\})^2.
 \]
 The well-distributed condition says that $x_n\ra 0$ as $n\ra\infty$. Note that as $\Delta_n$ is a discrete probability measure, we must have $0<x_n\leq 1$. So we may simply take $\ep_n = x_n$, then
 \[
 \sum_{z\,\in \, \supp(\Delta_n)} \Delta_n(\{z\})^2 \log \ep_n = x_n \log x_n \ra 0\quad\text{as}\quad n\ra\infty.\qedhere
 \]
\end{proof}

 \begin{prop}\label{prop:AZ-implies-weak-conv}
 Let $y$ be a place of $\Qbar$, let $\rho_n$ be a sequence of Borel probability measures on $\sP^1_y$ each of which admits a continuous potential with respect to the standard measure $\lambda_y$, and let $\rho$ be a probability measure meeting the same conditions. If
 \[
  (\rho_n - \rho, \rho_n - \rho)_y \ra 0\quad\text{as}\quad n\ra\infty,
 \]
 then $\rho_n \ra \rho$ as $n\ra\infty$ in the sense of weak convergence of measures.
\end{prop}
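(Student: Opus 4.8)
The plan is to combine the weak-$*$ compactness of the space of probability measures on the compact space $\sP^1_y$ with the lower semicontinuity of a suitably renormalized energy functional. Since $\sP^1_y$ is compact and metrizable, it suffices to show that every weak-$*$ limit point of the sequence $(\rho_n)$ equals $\rho$. So pass to a subsequence with $\rho_{n_k}\to\sigma$ weakly, where $\sigma$ is a Borel probability measure on $\sP^1_y$, and aim to prove $\sigma=\rho$.

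The main device is the Arakelov--Green's function of $\rho$. Since $\rho$ admits a continuous potential, Lemma~\ref{lemma:p-is-g-plus-log} writes the potential $p_\rho(z)=\int\log\abs{z-w}_y\,d\rho(w)$ as $g(z)+\log^+\abs{z}_y$ with $g$ continuous, and we set
\[
 G(z,w) = -\log\abs{z-w}_y + p_\rho(z) + p_\rho(w) + (\rho,\rho)_y,
\]
extended canonically via the Hsia kernel to all of $\sP^1_y\times\sP^1_y$ (cf.\ Section~\ref{sec:comparision-to-MY} and \cite[\S 3.4--3.5]{BakerRumely}). Two facts are needed. First, $G$ is lower semicontinuous on $\sP^1_y\times\sP^1_y$, equals $+\infty$ exactly on the diagonal of type~I points, and is bounded below; the lower bound follows from $-\log\abs{z-w}_y+\log^+\abs{z}_y+\log^+\abs{w}_y\geq-\log 2$ (inequality~\eqref{eqn:lower-bound-for-chordal}) together with the boundedness of $g$ and finiteness of $(\rho,\rho)_y$, both of which hold because $\rho$ has a continuous potential. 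Second, a bilinearity computation using $\int p_\rho\,d\mu=-(\mu,\rho)_y$ shows that $\iint G\,d\mu\,d\mu=(\mu-\rho,\mu-\rho)_y$ for every probability measure $\mu$ on $\sP^1_y$ that charges no classical point; this is the normalization~\eqref{eqn:a-g-is-normalized}. It applies in particular to each $\rho_{n_k}$, since these admit continuous potentials and hence have no atoms.

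With this setup the argument is short. Because $G$ is lower semicontinuous and bounded below on the compact space $\sP^1_y\times\sP^1_y$, it is an increasing pointwise limit of bounded continuous functions, so $\mu\mapsto\iint G\,d\mu\,d\mu$ is lower semicontinuous for weak convergence of probability measures (using $\rho_{n_k}\times\rho_{n_k}\to\sigma\times\sigma$ weakly). Hence
\[
 0=\lim_{k\to\infty}(\rho_{n_k}-\rho,\rho_{n_k}-\rho)_y=\liminf_{k\to\infty}\iint G\,d\rho_{n_k}\,d\rho_{n_k}\ \geq\ \iint G\,d\sigma\,d\sigma .
\]
Since $G$ is bounded below and equals $+\infty$ on the classical diagonal, this forces $\sigma$ to charge no classical point, and then $(\sigma-\rho,\sigma-\rho)_y=\iint G\,d\sigma\,d\sigma\leq 0$; in particular $(\sigma,\sigma)_y<\infty$. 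On the other hand $\rho$ has a continuous potential and $\sigma$ has finite energy, so the energy principle on the Berkovich line (\cite[Prop.~2.6, Prop.~4.5]{FRL}; \cite[Ch.~3]{BakerRumelyBook}) gives $(\sigma-\rho,\sigma-\rho)_y\geq 0$ with equality if and only if $\sigma=\rho$. Therefore $\sigma=\rho$, and since every weak-$*$ limit point of $(\rho_n)$ equals $\rho$, we conclude $\rho_n\to\rho$ weakly.

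I expect the only genuine obstacle to be the bookkeeping around the point at infinity and the classical diagonal: one must verify that the renormalized kernel $G$ is truly lower semicontinuous and bounded below including at $\infty\in\bP^1(\bC_y)$ (so that no mass of the limit $\sigma$ can escape there), and one must apply the energy principle in the precise form needed here — positivity and strictness of $(\sigma-\rho,\sigma-\rho)_y$ when $\rho$ carries a continuous potential but $\sigma$ is only known to have finite energy, rather than both measures having continuous potentials.
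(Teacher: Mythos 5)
Your proposal is correct and follows essentially the same route as the paper's proof: both pass to a weak-$*$ limit of a subsequence, rewrite the energy $(\rho_n-\rho,\rho_n-\rho)_y$ as the double integral of the normalized Arakelov--Green's function $g_\rho$ (continuous off the classical diagonal, lower semicontinuous and $+\infty$ on it), apply Portmanteau/lower semicontinuity to pass to the limit measure, and invoke the energy principle in the strengthened form (cf.\ \cite[Theorem 3.45]{BakerRumely}) that needs only $\rho$ to have a continuous potential. The subtleties you flag (behavior at $\infty$ and the classical diagonal, and the asymmetric hypotheses in the positivity statement) are precisely the points the paper handles via the cited results of Favre--Rivera-Letelier and Baker--Rumely.
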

\begin{proof}
 Let $g_\rho : \sP^1_y \times \sP^1_y \ra\bR\cup \{\infty\}$ be the function given by 
 \begin{equation}
  g_\rho(z,w) = p_\rho(z) + p_\rho(w) - \log\,\abs{z-w}_y + (\rho,\rho)_y,
 \end{equation}
 where $p_\rho(z)$ is the potential function of $\rho$, as defined in Lemma \ref{lemma:p-is-g-plus-log} above. We note that $g_\rho(z,w)$ is the Arakelov-Zhang function defined in \cite{BakerRumely}. It is easy to see that $g_\rho(z,w)$ is continous off of the diagonal, and lower-semicontinuous on the diagonal, where it is equals $\infty$. It follows from \cite[Lemmes 2.4, 2.5, 4.3, 4.4]{FRL} that we can ignore the diagonal (of classical points) in integrating $g_\rho(z,w)$ against $\rho_n$, so
 \[
  (\rho_n - \rho, \rho_n - \rho)_y = \iint_{\sP^1_y\times \sP^1_y} g_\rho(z,w)\,d\rho_n(z)\,d\rho_n(w).
 \]
 It follows from \cite[Prop. 2.6 and 4.5]{FRL} and the slightly stronger \cite[Theorem 3.45]{BakerRumely} that this pairing is nonnegative and vanishes if and only if $\rho_n = \rho$. (The theorem in \cite{BakerRumely} is stronger in that it removes the hypothesis that $\rho_n$ also admits a continuous potential.) 
 
 Let $\rho'$ be a weak limit of some subsequence of $\rho_n$, which must exist by Prokhorov's theorem. Since our space $\sP^1_y$ is metrizable (we refer the reader to the discussion of \cite[Chapter 1.5]{BakerRumelyBook} for details), we can apply the Portmanteau theorem for weak convergence of measures to the lower-semicontinuous function $g_\rho(z,w)$ to conclude that
 \begin{equation}
  \begin{split}
  0 = \liminf_{n\ra \infty} \iint_{\sP^1_y\times \sP^1_y} g_\rho(z,w)\,d\rho_n(z)\,d\rho_n(w) &\geq \iint_{\sP^1_y\times \sP^1_y} g_\rho(z,w)\,d\rho'(z)\,d\rho'(w)\\
  &= (\rho - \rho', \rho - \rho')_y \geq 0,
  \end{split}
 \end{equation}
 where the nonnegativity follows from the fact that our measures both have continuous potentials. But then $(\rho - \rho', \rho - \rho')_y = 0$ so $\rho = \rho'$, and since every subsequence of $\rho_n$ has a further subsequence which converges to this weak limit, the entire sequence has weakly limits to $\rho$. 
 \end{proof}

 We are now ready to prove the local equidistribution result. We note that our result is very much inspired by Propositions 2.11 and 4.12 of \cite{FRL}, however, there are several differences, most notably, that our discrete measures may in fact be infinitely supported.
\begin{prop}[Local equidistribution]\label{prop:local-equi}
 Let $y$ be a place of $\Qbar$ and $\rho$ be a Borel probability measure on $\sP^1_y$ which admits a continuous potential in the sense that there there exists a continuous function $g : \sP^1_y \ra \bR$ such that $g(\infty)=0$ and $\Delta g = \rho - \lambda_y$. Suppose that $\Delta_n$ is a sequence of discrete probability measures on $\sP^1_y$ which are well-distributed, which satisfy the local finiteness condition
 \begin{equation}\label{eqn:prop-l-e-finite-height}
  \int_{\sA^1_y} \log^+\, \abs{z}_y \,d\Delta_n(z) < \infty
 \end{equation}
  for each $n\in\bN$, and for which 
 \begin{equation}
  \limsup_{n\ra\infty}\ (\rho - \Delta_n, \rho - \Delta_n)_y \leq 0.
 \end{equation}
 Then the measures $\Delta_n$ converge weakly to $\rho$ in the sense of weak convergence of measures.
\end{prop}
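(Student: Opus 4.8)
The plan is to reduce to Proposition~\ref{prop:AZ-implies-weak-conv} by replacing each $\Delta_n$ with a carefully chosen regularization $\Delta_{n,\ep_n}$ — which, unlike $\Delta_n$ itself, admits a continuous potential with respect to $\lambda_y$ — and then recovering weak convergence of the $\Delta_n$ by letting $\ep_n\to 0$. Throughout one may assume $\Delta_n$ puts no mass at $\infty$, since a point mass there does not enter the energy pairing and is untouched by the estimates below.

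First I would apply Lemma~\ref{lemma:w-d-in-useful-form} to the well-distributed hypothesis to obtain a sequence $\ep_n\in(0,1]$ with $\ep_n\to 0$ and $\big(\sum_{z}\Delta_n(\{z\})^2\big)\log\ep_n\to 0$. Next, noting that the proof of Lemma~\ref{lemma:lower-bound-on-Delta-n-height-pairing} is purely a local computation at a single place, I would rerun that argument with the given continuous potential $g$ in the role of $g_y$, producing a continuous modulus $\eta\colon[0,\infty)\to[0,\infty)$ with $\eta(0)=0$ for which
\[
(\rho-\Delta_{n,\ep_n},\rho-\Delta_{n,\ep_n})_y \le (\rho-\Delta_n,\rho-\Delta_n)_y + 2\eta(\ep_n) - \Big(\sum_{z}\Delta_n(\{z\})^2\Big)\log\ep_n
\]
for every $n$. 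Here $\eta(\ep_n)\to 0$ since $\eta$ is continuous at $0$, the last term tends to $0$ by the choice of $\ep_n$, and the hypothesis $\limsup_n(\rho-\Delta_n,\rho-\Delta_n)_y\le 0$ then forces $\limsup_n(\rho-\Delta_{n,\ep_n},\rho-\Delta_{n,\ep_n})_y\le 0$. On the other hand, Lemma~\ref{lemma:reg-has-cts-potential}, whose local finiteness hypothesis is exactly \eqref{eqn:prop-l-e-finite-height}, shows each $\Delta_{n,\ep_n}$ admits a continuous potential, so by the positivity fact recalled just after \eqref{eqn:local-pairing} (namely \cite[Prop.~2.6, Prop.~4.5]{FRL}) we have $(\rho-\Delta_{n,\ep_n},\rho-\Delta_{n,\ep_n})_y\ge 0$ for all $n$; hence this sequence converges to $0$, and Proposition~\ref{prop:AZ-implies-weak-conv} gives $\Delta_{n,\ep_n}\to\rho$ weakly.

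It remains to deregularize. Fix a continuous $f\colon\sP^1_y\to\bR$. In the non-archimedean case the map $(\ep,\zeta)\mapsto f(\pi_\ep(\zeta))$ is continuous on the compact set $[0,1]\times\sP^1_y$ by Lemma~\ref{lemma:retraction-is-two-var-cts}, hence uniformly continuous, so $\omega_f(\ep):=\sup_{\zeta}\abs{f(\pi_\ep(\zeta))-f(\zeta)}\to 0$ as $\ep\to 0$; in the archimedean case the same holds with $\omega_f(\ep):=\sup_{z}\int_0^1\abs{f(z+\ep e^{2\pi it})-f(z)}\,dt$, since the chordal metric is dominated by the Euclidean one and $f$ is uniformly continuous on $\bP^1(\bC)$. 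In either case $\abs{\int f\,d\Delta_{n,\ep_n}-\int f\,d\Delta_n}\le\omega_f(\ep_n)\to 0$, so combining with $\int f\,d\Delta_{n,\ep_n}\to\int f\,d\rho$ yields $\int f\,d\Delta_n\to\int f\,d\rho$; as $f$ was arbitrary, $\Delta_n\to\rho$ weakly.

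I expect the delicate point to be the balance achieved in the second step: passing from $\Delta_n$ to $\Delta_{n,\ep_n}$ inflates the self-energy by the term $-\big(\sum_z\Delta_n(\{z\})^2\big)\log\ep_n$, and the well-distributed hypothesis (through Lemma~\ref{lemma:w-d-in-useful-form}) is precisely what permits taking $\ep_n\to 0$ slowly enough to keep $\eta(\ep_n)$ small yet fast enough to annihilate this inflation. One must also check that the bound of Lemma~\ref{lemma:lower-bound-on-Delta-n-height-pairing}, stated there for a generalized adelic measure and $\mu$-a.e.\ place, really does go through verbatim at a single fixed place for an arbitrary probability measure $\rho$ with continuous potential.
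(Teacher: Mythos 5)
Your proposal follows the paper's own proof of Proposition \ref{prop:local-equi} almost step for step: regularize $\Delta_n$ at a scale $\ep_n$ supplied by Lemma \ref{lemma:w-d-in-useful-form}, compare energies via the estimate of Lemma \ref{lemma:lower-bound-on-Delta-n-height-pairing} (you are right that its proof is purely local, so rerunning it at the fixed place with the given potential $g$ is exactly what is needed, and is how the paper uses it), invoke Lemma \ref{lemma:reg-has-cts-potential} and positivity to conclude $(\rho-(\Delta_n)_{\ep_n},\rho-(\Delta_n)_{\ep_n})_y\to 0$, and then apply Proposition \ref{prop:AZ-implies-weak-conv} to get $(\Delta_n)_{\ep_n}\to\rho$ weakly. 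The one place you genuinely diverge is the deregularization: the paper extracts a subsequential weak limit of the $\Delta_n$ by Prokhorov's theorem and identifies it with $\rho$ using Lemma \ref{lemma:convergence-of-regularizations}, whereas you bound $\bigl|\int f\,d(\Delta_n)_{\ep_n}-\int f\,d\Delta_n\bigr|$ directly by a uniform modulus coming from Lemma \ref{lemma:retraction-is-two-var-cts} in the non-archimedean case and from uniform continuity of $f$ on $\bP^1(\bC)$ in the archimedean case. That direct argument is correct, and arguably cleaner, since it avoids the subsequence bookkeeping and makes the dependence on $\ep_n$ explicit.

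The one flawed point is your opening reduction. You assert that one may assume $\Delta_n(\{\infty\})=0$ ``since a point mass there does not enter the energy pairing.'' That is precisely why the energy hypothesis alone cannot rule out persistent mass at $\infty$: the pairing over $\sA^1_y\times\sA^1_y$ is blind to it. For instance, if $\rho$ is a measure with $(\rho,\rho)_y<0$ (e.g.\ the uniform measure on a circle of radius $R>1$ at an archimedean place) and $\Delta_n=\tfrac12\delta_\infty+\tfrac12 D_n$ with $D_n$ good discretizations of $\rho$, then $\limsup_n(\rho-\Delta_n,\rho-\Delta_n)_y\leq 0$ and the local finiteness condition hold, yet $\Delta_n\not\to\rho$; the only hypothesis that fails is well-distributedness. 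So the reduction must be justified through that hypothesis, as the paper does: if $\Delta_n(\{\infty\})\not\to 0$, then along a subsequence $\sum_z\Delta_n(\{z\})^2\geq c^2>0$, contradicting \eqref{eqn:Delta-n-well-distr-defn}; once $\Delta_n(\{\infty\})\to 0$, one may replace $\Delta_n$ by its normalized restriction to $\sA^1_y$ without changing weak limits. Note also that the reduction is genuinely needed in your scheme, since a regularized measure charging $\infty$ does not admit a continuous potential, so the positivity step and Proposition \ref{prop:AZ-implies-weak-conv} would not apply. With that justification supplied, your proof is complete.
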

\begin{proof}
  First, we note that we may assume that no $\Delta_n = \delta_\infty$, as the well-distributed condition requires that the support of $\Delta_n$ must grow, so we would be removing at most finitely many terms. Second, let us show that we may assume that no $\Delta_n$ measure charges $\infty\in\bP^1(\Qbar)$. To see this, note that if $\Delta_n(\{\infty\})\not\rightarrow 0$, then there exists a subsequence $\Delta_{n_k}$ with $\Delta_{n_k}(\{\infty\}) \geq c > 0$ for all $k\in\bN$.  But then the $\Delta_n$ are no longer well-distributed, as for these indices $n_k$ we have
    \[
    \sum_{z\,\in \, \supp(\Delta_{n_k})} \Delta(\{z\})^2 \ge c^2 > 0.
    \]
 Define a new sequence of probability measures by 
 \[
  \Delta_n' = \frac{1}{1-\Delta_n(\{\infty\})} \Delta_n|_{\sA^1_y}.
 \]
 Notice that, since $\Delta_n(\{\infty\})\ra 0$, any weak limit of any subsequence of $\Delta_n'$ is the same as the weak limit of any subsequence of $\Delta_n$. Therefore, by replacing $\Delta_n$ by $\Delta_n'$ if necessary, we can assume that no $\Delta_n$ measure charges $\infty\in \bP^1(\bC_y)$.
 
 Write each $\Delta_n$ as 
 \[
  \Delta_n = \sum_i t_i^{(n)} \delta_{z^{(n)}_{i}},
 \]
 where $z^{(n)}_{i}\in \bA^1(\bC_y)$ and $\sum_i t_i^{(n)} = 1$. By equation \eqref{eqn:Delta-pairing-vs-Delta-ep} from Lemma \ref{lemma:lower-bound-on-Delta-n-height-pairing}, there exists a function $\eta=\eta_y$, depending on $\rho$ and the place $y$, but not on $\Delta_n$, such that 
 \[
  (\rho_y - \Delta_n, \rho_y-\Delta_n)_y - (\rho_y - (\Delta_n)_\ep, \rho_y-(\Delta_n)_\ep )_y\geq -2 \eta(\ep) + \sum_{i=1}^\infty (t_i^{(n)})^2 \log \ep,
 \]
 or 
 \[
  (\rho_y - (\Delta_n)_\ep, \rho_y-(\Delta_n)_\ep )_y \leq (\rho_y - \Delta_n, \rho_y-\Delta_n)_y + 2 \eta(\ep) - \sum_{i=1}^\infty (t_i^{(n)})^2 \log \ep.
 \]
 Now by Lemma \ref{lemma:reg-has-cts-potential}, $(\Delta_n)_\ep$ admits a continuous potential for each $n$ and $\ep\in (0,1]$, so we also know that 
 \[
  0 \leq (\rho_y - (\Delta_n)_\ep, \rho_y-(\Delta_n)_\ep )_y.
 \]
Let $(\ep_n)$ be a sequence of real numbers in $(0,1]$ for which the conclusion of Lemma~\ref{lemma:w-d-in-useful-form} holds. For this sequence,
 \[
  2 \eta(\ep_n) - \sum_{i=1}^\infty (t_i^{(n)})^2 \log \ep_n \ra 0
 \]
as $n \to \infty$. It follows that 
\[
 \lim_{n\ra\infty} \ (\rho_y - (\Delta_n)_{\ep_n}, \rho_y-(\Delta_n)_{\ep_n} )_y = 0. 
\]
Since our measures admit continuous potentials, it follows from Proposition \ref{prop:AZ-implies-weak-conv} that we have the weak convergence of measures 
\[
 \lim_{n\ra\infty} (\Delta_n)_{\ep_n} = \rho_y.
\]
Now, for any subsequence of the $\Delta_n$, we know by Prokhorov's theorem that there is a further subsequence, which we denote $(\Delta_{n_k})_{k=1}^\infty$, that has a weak limit, say $\rho'$. Then by Lemma \ref{lemma:convergence-of-regularizations}, for any $\ep_m$ 
%in the sequence of $\ep_m$ as above for which $\Delta_m$ is well-distributed, 
we have
\[
 \lim_{k\ra\infty} (\Delta_{n_k})_{\ep_m} = \rho'_{\ep_m},
\]
which implies
\[
 \lim_{m\ra\infty} \lim_{k\ra\infty} (\Delta_{n_k})_{\ep_m} = \lim_{m\ra\infty} \rho'_{\ep_m} = \rho'.
\]
Since the subsequence with $m=n_k$ converges to $\rho$, so we must have $\rho'=\rho$, and it follows that every convergent subsequence of $\Delta_n$ converges to $\rho$, so $\Delta_n\ra \rho$, as claimed.
\end{proof}

\begin{thm}[Equidistribution for generalized adelic heights]\label{thm:gen-adelic-equi}
 Let $\Delta_n$ be a well-distributed sequence of probability measures on $\Qbar$ and $\rho$ be a generalized adelic measure. If $h_\rho(\Delta_n) \ra 0$ as $n\ra\infty$, then $\Delta_n \ra \rho_y$ in the weak sense of convergence of measures at $\mu$-almost every place $y\in Y$.
\end{thm}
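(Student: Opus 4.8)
The plan is to deduce the global statement from the local equidistribution theorem (Proposition~\ref{prop:local-equi}) applied at $\mu$-almost every place, with the hypothesis $h_\rho(\Delta_n)\to 0$ playing the role that ``degree $\to\infty$'' plays classically. Throughout, write $f_n(y) := (\rho_y - \Delta_n,\rho_y-\Delta_n)_y$, so that $h_\rho(\Delta_n) = \tfrac12\int_Y f_n(y)\,d\mu(y) \to 0$. Two facts about $f_n$ are available essentially for free: by Lemma~\ref{lemma:integrability-of-delta-heights} there is a fixed $G \in L^1(Y)$ (namely $G(y) = 4C(y)+\chi_\infty(y)\log 2$) with $f_n(y) \ge -G(y)$ for all $n$, and, since each $\Delta_n$ has finite height, the local finiteness condition $\int_{\sA^1_y}\log^+\abs{z}_y\,d\Delta_n < \infty$ holds for every $n$ at $\mu$-a.e.\ $y$.

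Step one is to show $\liminf_n f_n(y)\ge 0$ for $\mu$-a.e.\ $y$. Since $(\Delta_n)$ is well-distributed, Lemma~\ref{lemma:w-d-in-useful-form} supplies $\ep_n\in(0,1]$ with $\ep_n\to 0$ and $s_n\log\ep_n\to 0$, where $s_n = \sum_{z\in\supp(\Delta_n)}\Delta_n(\{z\})^2$. Plugging $\ep=\ep_n$ into \eqref{eqn:lower-bound-pairing-approx} of Lemma~\ref{lemma:lower-bound-on-Delta-n-height-pairing} gives, at $\mu$-a.e.\ $y$,
\[
 f_n(y)\ \ge\ -2\,\eta_y(\ep_n) + s_n\log\ep_n \longrightarrow 0 \quad (n\to\infty),
\]
because $\eta_y$ is continuous with $\eta_y(0)=0$; hence $\liminf_n f_n(y)\ge 0$ there.

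Step two combines this with the global bound. Applying Fatou's lemma to the nonnegative functions $f_n + G$ gives $\int_Y \liminf_n f_n\,d\mu \le \liminf_n \int_Y f_n\,d\mu = 0$; together with $\liminf_n f_n \ge 0$ $\mu$-a.e., this forces $\liminf_n f_n(y) = 0$ for $\mu$-a.e.\ $y$. Writing $f_n = f_n^+ - f_n^-$ for the positive and negative parts, it follows that $f_n^-(y)\to 0$ $\mu$-a.e., and since $0\le f_n^-\le G\in L^1(Y)$ dominated convergence yields $\int_Y f_n^-\,d\mu\to 0$; then $\int_Y f_n^+\,d\mu = \int_Y f_n\,d\mu + \int_Y f_n^-\,d\mu\to 0$, so $f_n\to 0$ in $L^1(Y)$. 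Extracting a subsequence $(n_k)$ along which $f_{n_k}(y)\to 0$ for $\mu$-a.e.\ $y$, we may apply Proposition~\ref{prop:local-equi} — its hypotheses being met at a.e.\ $y$: $(\Delta_n)$ well-distributed, the local finiteness condition, and $\limsup_k f_{n_k}(y)=0$ — to conclude $\Delta_{n_k}\to\rho_y$ weakly at $\mu$-a.e.\ $y$. Since every subsequence of $(\Delta_n)$ admits such a further subsequence and the limit $\rho_y$ does not depend on the choice (weak convergence on the fiber $\sP^1_y\cong\sP^1(\bC_p)$, $y\mid p$, being testable against a fixed countable family of continuous functions), the standard subsequence argument upgrades this to weak convergence of the full sequence $\Delta_n\to\rho_y$ at $\mu$-a.e.\ place.

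I expect the interaction between the single global estimate $\int_Y f_n\,d\mu\to 0$ and the needed pointwise-in-$y$ information to be the delicate part: the only quantitative global input is an $L^1$-type statement, whereas Proposition~\ref{prop:local-equi} demands a pointwise $\limsup$ bound, so the argument must route through the $\liminf$-lower bound of Step one, a Fatou/dominated-convergence sandwich (this is where $r_n(y)=2\eta_y(\ep_n)-s_n\log\ep_n$ fails to be in $L^1(Y)$, forcing the use of the $n$-independent dominating function $G$ rather than an $n$-dependent correction), and a subsequence-plus-uniqueness passage to the full sequence. One must also take care to collect a single $\mu$-null exceptional set absorbing the places where $C(y)=\infty$, where the continuous potential or the modulus $\eta_y$ fails to exist, and where some $\Delta_n$ violates the local finiteness condition.
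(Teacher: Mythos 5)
Your proposal tracks the paper's own proof almost step for step: the same local pairing functions $f_n(y)=(\rho_y-\Delta_n,\rho_y-\Delta_n)_y$, the same pointwise lower bound $-2\eta_y(\ep_n)+s_n\log\ep_n$ obtained from Lemma~\ref{lemma:lower-bound-on-Delta-n-height-pairing} combined with Lemma~\ref{lemma:w-d-in-useful-form}, an $n$-independent integrable dominating function for $f_n^-$ (you take $4C+\chi_\infty\log 2$ from Lemma~\ref{lemma:integrability-of-delta-heights}; the paper assembles a slightly different $F$ from Lemmas~\ref{lemma:lower-bds-on-standard-pairings} and~\ref{lemma:pairing-terms-bdd} --- either works), dominated convergence for $\int_Y f_n^-\,d\mu$, the identity $\int_Y f_n^+\,d\mu=\int_Y f_n\,d\mu+\int_Y f_n^-\,d\mu$ to get $\int_Y\abs{f_n}\,d\mu\to 0$, and finally Proposition~\ref{prop:local-equi} at the full-measure set of places where the potential exists and the local finiteness condition holds for every $n$ (after discarding the finitely many $n$ with $h_\rho(\Delta_n)=\infty$, via Corollary~\ref{cor:finite-height-is-same-for-all-heights}). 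Your extra Fatou step is harmless but redundant, since $\liminf_n f_n\ge 0$ a.e.\ is already equivalent to $f_n^-\to 0$ a.e.

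The gap is in your final upgrade from subsequential to full-sequence convergence. From $\int_Y\abs{f_n}\,d\mu\to 0$ you extract one subsequence with $f_{n_k}\to 0$ $\mu$-a.e.\ and then invoke the Urysohn subsequence principle fiberwise; but the exceptional null set produced by each such extraction depends on the subsequence chosen. What you actually prove is ``for every subsequence there is a further subsequence and a null set off which $\Delta_{n_{k}}\to\rho_y$,'' whereas the subsequence principle at a fixed $y$ requires ``for a.e.\ $y$: every subsequence of $(\Delta_n)$, viewed on $\sP^1_y$, has a further subsequence converging to $\rho_y$.'' These quantifiers cannot be interchanged: there are uncountably many subsequences, and almost-everywhere convergence is not convergence in any topology, so the countable family of test functions does not rescue the swap. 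Concretely, what Proposition~\ref{prop:local-equi} needs for the full sequence at a fixed $y$ is $\limsup_n f_n(y)\le 0$, and the facts you have established --- $\int_Y\abs{f_n}\,d\mu\to 0$ and $\liminf_n f_n\ge 0$ a.e.\ --- only yield $\liminf_n f_n(y)=0$ a.e.; typewriter-type examples show that this implication to the $\limsup$ statement is invalid in general, so some further input about the specific $f_n$ would be required. In fairness, this is precisely the point at which the paper's proof passes from $\int_Y\abs{f_n}\,d\mu\to 0$ to ``$f_n(y)\to 0$ for $\mu$-almost all $y$'' without further argument, so you have isolated the genuinely delicate step, but your patch does not close it. It does close under the stronger hypothesis $\sum_n h_\rho(\Delta_n)<\infty$ (then $\sum_n f_n^+<\infty$ a.e., so $f_n^+\to 0$ a.e.), which covers the application in Theorem~\ref{thm:stoch-equi} where the heights decay geometrically; without such summability, your route only delivers the subsequential form of the conclusion.
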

\begin{rmk}
We note that if our generalized adelic measure is in fact defined over a single number field, then the condition of $\mu$-almost every place becomes every place of that number field, as all places of any number field have positive $\mu$ measure. We also remind the reader that weak convergence of measures here refers to weak-* convergence of the measures as positive linear operators on the space of (compactly supported) continuous functions of the Berkovich line $\sP^1_y$. 
\end{rmk}
\begin{proof}
 Let $f_n : Y \ra \bR$ be the functions given by the local energy pairings:
 \[
  f_n(y) = \frac12 (\rho_y - \Delta_n, \rho_y-\Delta_n)_y,
 \]
 so that  
 \[
  h_\rho(\Delta_n) = \int_Y f_n(y)\,d\mu(y). 
 \]
 Note that since $h_\rho(\Delta_n)\ra 0$, we may assume without loss of generality by tossing out at most finitely many terms of the sequence that $h_\rho(\Delta_n)<\infty$ for all $n$, hence that $\Delta_n$ meets the finite height condition by Corollary~\ref{cor:finite-height-is-same-for-all-heights}. Our first goal is to prove that $f_n(y) \ra 0$ for almost all places $y\in Y$. We will then show that $\Delta_n$ meets the local finiteness condition needed in Proposition \ref{prop:local-equi} for almost all $y\in Y$, so at the intersection of these two sets of places, we can apply our local equidistribution result Proposition \ref{prop:local-equi} to get the desired result. 
 
 In order to prove that $f_n(y)\ra 0$ for almost all $y$, we start by breaking $f_n(y)$ into its positive and negative parts, that is, we let
 \[
 f_n(y) = f_n^+(y) - f_n^-(y),
 \]
 where $f_n^+,f_n^- \geq 0$. Write
 \[
  \Delta_n = \sum_i t_i^{(n)} \delta_{\alpha_i^{(n)}},
 \]
 where the $\alpha_i^{(n)}\in \Qbar$, $t_i^{(n)}\geq 0$, and $\sum_i t_i^{(n)} = 1$. (Note that the sum for $\Delta_n$ is countably infinite or finite.) Applying Lemma \ref{lemma:lower-bound-on-Delta-n-height-pairing}, we know that for $\mu$-almost every place $y$, there is a continuous nonnegative function $\eta_y : [0,\infty)\ra [0,\infty)$ depending on $\rho$ and the $y$ such that $\eta_y(0)=0$ and such that 
 \[
  f_n^-(y) \leq \eta_y(\ep) + \frac12 \sum_{i} t_{n,i}^2 \log \ep^{-1} \quad\text{for all}\quad \ep\in(0,1].
 \]

 By Lemma \ref{lemma:w-d-in-useful-form} and our assumption that the $\Delta_n$ are well-distributed, there exists a sequence of choices$\ep = \ep_n$ with $0<\ep_n\leq 1$, $\ep_n \ra 0$ as $n\ra\infty$, and such that the right hand side of the above equation must vanish as $n\ra\infty$, where the first term vanishes by the continuity of $\eta_y$ and $\eta_y(0)=0$, and the second vanishes by \eqref{eqn:Delta-n-condition}. It follows that 
 $
  f_n^-(y) \ra 0 
 $ pointwise for $\mu$-almost every $y\in Y$.
 
 Our goal is now to prove that in fact $f_n^-$ is bounded by an integrable function, so that we can apply dominated convergence and conclude that $\int f_n^-(y)\,d\mu(y)\ra 0$. We start by writing:
 \[
  f_n(y) - \frac12 (\lambda_y - \Delta_n, \lambda_y - \Delta_n)_y = \frac12 (\rho_y,\rho_y)_y - (\rho_y - \lambda_y, \Delta_n)_y.
 \]
 If $y\not\in Y(\bQ,\infty)$, then by Lemma \ref{lemma:lower-bds-on-standard-pairings}, we have $(\lambda_y - \Delta_n, \lambda_y - \Delta_n)_y\geq 0$, so in fact,
 \[
  f_n(y) \geq \frac12 (\rho_y,\rho_y)_y - (\rho_y - \lambda_y, \Delta_n)_y.
 \]
 We recall that by Lemma \ref{lemma:pairing-terms-bdd},
 \[
  (\rho_y,\rho_y)_y \geq -2C(y),
 \]
 and
 \[
  - (\rho_y - \lambda_y, \Delta_n)_y = \int_{\sA^1_y} g_y(z) \,d\Delta_n(z),
 \]
 so $-(\rho_y - \lambda_y, \Delta_n)_y \geq - C(y)$. It follows that for $y\not\in Y(\bQ,\infty)$,
 \[
  f_n(y) \geq -3C(y),\quad\text{or}\quad f_n^-(y) \leq 3C(y).
 \]
 For $y\in Y(\bQ,\infty)$, let $\eta_y$ be the function from Lemma \ref{lemma:lower-bound-on-Delta-n-height-pairing} where it exists. Recall from the proof of Lemma \ref{lemma:lower-bound-on-Delta-n-height-pairing} that for $y\mid \infty$, $\eta_y(\ep) = \hat \eta_y(\ep) + \ep$ where
  \[
  \hat\eta_y(\ep) =\sup_{\substack {z,w \in\bP^1(\bC)\\ \sigma(z,w)\leq r} } \abs{g_y(z) - g_y(w)} .
 \]
 Now observe that as $C(y)= \sup_{z\in\sP^1_y} \abs{g_y(z)}$, we have $\hat \eta_y(\ep) \leq 2C(y)$ for all $y$ away from a set of $\mu$-measure zero. Then Lemma \ref{lemma:lower-bound-on-Delta-n-height-pairing} gives $f_n(y)\geq -2 \eta_\infty(\ep)$, that is, taking $\ep=1$,
 \[
 f_n^-(y)\leq 2\eta_y(1) \leq 2 C(y) + 1 \quad\text{for $\mu$-a.e.}\quad y\in Y(\bQ,\infty). 
 \]
 Define the function $F : Y \ra [0,\infty)$ by
 \[
 F(y) = \begin{cases}
         2C(y) + 1 &\text{if }y\in Y(\bQ,\infty),\\
         3C(y) &\text{if }y\in Y \setminus Y(\bQ,\infty).\\
        \end{cases}
 \]
 Clearly, $\int_Y F\,d\mu \leq 1+ 3\int C\,d\mu<\infty$, so $F$ is integrable. Lebesgue's dominated convergence theorem then applies and tells us that $\int_Y f_n^- \,d\mu \ra 0$, which, together with the assumption that 
 \[
  h_\rho(\Delta_n) = \int_Y f_n \,d\mu = \int_Y f_n^+ \,d\mu - \int_Y f_n^- \,d\mu\ra 0,
 \]
 tells us that $\int f_n^+ \,d\mu\ra 0$ as well. If follows that $\int \abs{f_n} \,d\mu \ra 0$, so we conclude that $f_n(y) \ra 0$ for $\mu$-almost all places $y\in Y$. 
 
 Now we show that for almost all $y\in Y$, we have that $\Delta_n$ meets the two conditions of Proposition \ref{prop:local-equi}, so we can conclude that $\Delta_n\ra \rho$ for $\mu$-a.e. place $y\in Y$. To see this, notice that as each $\Delta_n$ has finite height, 
 \[
  \int_Y \int_{\sA^1_y} \log^+\,\abs{z}_y \,d\Delta_n(z)\,d\mu(y) < \infty. 
 \]
 It follows that for a set of $\mu$-almost all places which we denote as $Y_n$,
 \[
  \int_{\sA^1_y} \log^+\,\abs{z}_y \,d\Delta_n(z)<\infty \quad\text{for all}\quad y\in Y_n.
 \]
 Since $\mu(Y\setminus Y_n) = 0$, we form the set
 \[
  Y' = \bigcap_{n=1}^\infty Y_n,
 \]
 and we can conclude that $\mu(Y\setminus Y') = 0$ as well. Let $Y'' = \{ y\in Y : f_n(y) \ra 0\}$. As we argued above, $\mu(Y\setminus Y'') = 0$, so if we let $Z = Y'\cap Y''$, then again, $\mu(Y\setminus Z) = 0$. For all $y\in Z$, $\Delta_n$ meets the conditions of Proposition \ref{prop:local-equi}, and so we can conclude that the set of $y$ for which we get the desired equidistribution result has full $\mu$-measure.
\end{proof}

\section{Stochastic Dynamics}\label{sec:stoch-dyn}

\subsection{Stochastic dynamical heights}
The main goal of this section is to establish that the stochastic height is in fact the height associated to a certain generalized adelic measure which depends on the family of maps $S$ and the probability measure $\v_1$ associated to this family. 

We will recall here the main results of Healey and Hindes \cite{HealeyHindes} in the context of arithmetic dynamics. Let $S$ be a finite or countably infinite set of rational maps defined over an algebraic closure $\Qbar$, with each map being of degree at least $2$. We view these as functions of the projective line over $\Qbar$ to itself. Let $\nu_1$ be a probability measure on $S$. For $n\in\bN$, let $\nu_n$ be the product measure induced by $\nu_1$ on $S^n$; that is, for $\gamma_n = (\varphi_1,\ldots,\varphi_n) \in S^n$, we have $\nu_n(\gamma_n) = \nu_1(\varphi_1) \cdots \nu_1(\varphi_n)$. For convenience, for each $\gamma_n=(\varphi_1,\ldots, \varphi_n)\in S^n$ we will write $\gamma_n(\al)$ for the composition $\varphi_n\circ \cdots\circ \varphi_1(\al)$. Furthermore, we let $\nu$ denote the unique product measure on $S^\bN$ which satisfies, for all $k\in\bN$ and $A_1,\ldots, A_k \subset S$, $\nu(A_1\times A_2\times \cdots \times A_k \times S\times S\times \cdots ) = \prod_{i=1}^k \nu_1(A_i)$ defined on the smallest $\sigma$-algebra generated by sets of the form $A_1\times \cdots A_k\times S\times S\times \cdots$. With this $\sigma$-algebra and measure we view $S^\bN$ as the space of all independent identically distributed (i.i.d.) sequences in $S$.  Given a sequence $\gamma = (\varphi_i)_{i=1}^\infty \in S^\bN$ and positive integer $n$, we will denote by $\gamma_n$ the composition $\gamma_n = \varphi_n \circ \ldots \circ \varphi_1$, and we will denote the degree of this composite map as 
\[
 \deg(\gamma_n) = \prod_{i=1}^n \deg(\varphi_i).
\]

As we will need to exclude these points from our equidistribution theorem, we define what it means for a point to be \emph{exceptional} for the stochastic system $S$:
 \begin{defn}\label{defn:exceptional-set}
  Let $S$ be a countable set of rational maps defined over $\Qbar$. We define the \emph{exceptional set} of the system $S$ to be the set $E_{S}$ of all points $\alpha \in \bP^1(\Qbar)$ such that the grand orbit
    \[
        \cO^{\pm}_{S} (\alpha) := \bigcup_{n=0}^\infty \bigcup_{\gamma_n \in S^n}\left(\gamma_n^{-1}(\alpha) \cup \{\gamma_n(\alpha)\}\right)
    \]
 is finite.
\end{defn}
\begin{rmk}\label{rem:exceptional}\mbox{}
\begin{enumerate}
    \item While in classical dynamics the exceptional set is traditionally defined in terms of finiteness of the grand orbit, it is equivalent to require only that the backward orbit
    \[
        \cO^-_S (\alpha) := \bigcup_{n=0}^\infty \bigcup_{\gamma_n \in S^n}\gamma_n^{-1}(\alpha)
    \]
    is finite. Indeed, any finite set which is backward invariant under a collection of surjective maps is necessarily forward invariant as well.
    \item It is straightforward to see that the exceptional set for $S$ must be contained in the exceptional set for every map $\varphi \in S$. In particular, this implies that $|E_S| \le 2$.
    \item On the other hand, a point $\alpha$ may be exceptional for every $\varphi\in S$ without being exceptional for $S$. For example, let $\varphi_1(z) = \frac{1}{z^2}$ and $\varphi_2(z) = z^2 + 1$, and take $S = \{\varphi_1,\varphi_2\}$. Then $\infty$ is exceptional for both maps $\varphi_1$ and $\varphi_2$, but $\infty$ is not exceptional for $S$, since $\varphi_1^{-1}(\infty) = \{0\}$ and $0$ is not exceptional for $\varphi_2$.
\end{enumerate}
\end{rmk}

Let $y\in Y$ be a place of $\Qbar$. For each $\varphi \in S$ we can find a continuous function $g_{\varphi,y} : \bP^1(\bC_v) \ra \bR$ such that $g_{\varphi,y}$ is continuous and 
\begin{equation}\label{eqn:first-g-phi-y}
 \Delta g_{\varphi,y} = \frac{1}{\deg \varphi} \varphi^*(\lambda_y) - \lambda_y,
\end{equation}
where $\lambda_y$ denotes the equilibrium measure of the unit disc in $\sA_y^1$, which we remind the reader is the normalized Lebesgue measure of the unit circle in $\bC$ if $y$ is archimedean, or the unit point mass at the Gauss point if $y$ is non-archimedean. Such a $g_{\varphi,y}$ is not difficult to construct; for example, if we write $\varphi(z) = f(z)/g(z)$ where $f,g\in \Qbar[z]$ are polynomials with no common factors, then
\[
 g_{\varphi,y}(z) = \frac{1}{\deg \varphi}\log\max \{ \abs{f(z)}_y, \abs{g(z)}_y\} - \log^+\,\abs{z}_y
\]
suffices, with the natural extension to the Berkovich line in the non-archimedean setting. Note that, depending on our choice of representation of $\varphi(z)$ as a quotient, our $g(z)$ functions may differ by a constant. We will choose to normalize our $g_{\varphi,y}(z)$ functions so that $g_{\varphi,y}(\infty) = 0$. In order to construct the canonical measure associated to $\varphi$ at $v$, one ordinarily forms the telescoping series:
\[
 G_{\varphi,y}(z) = \sum_{n=0}^\infty \frac{1}{(\deg \varphi)^n} g_{\varphi,y}(\varphi^n(z)).
\]
It follows that $G_{\varphi,y}(z)$ is also continuous on $\sP_y^1$ and one can show (see, for example, \cite[Th\'eor\`eme 8]{FRL}) that $\Delta G_{\varphi,y} = \mu_{\varphi,y} - \lambda_y$. 

Each $g_{\varphi,y}(z)$ is continuous and bounded on $\sP_y^1$, but in order for our resulting system to be given by a generalized adelic measure, we will need an additional assumption on our family of maps. Define for each place $y\in Y$ and $\varphi\in S$ 
 \begin{equation}\label{eqn:C-phi-y}
  C_{\varphi}(y) = \sup_{z\in \sP^1(\bC_y)} \abs{g_{\varphi,y}(z)}_y.
 \end{equation}
This exists, as each $g_{\varphi,y}$ is a continuous function on the compact space $\sP^1_y$. Notice that, as each map $\varphi$ is defined over a single number field $K$ and has good reduction at all but finitely many places of its base field, $C_\varphi(y)$ is a nonnegative simple function with compact support in $Y$. If we take the expected value over all maps $\varphi$ in our family according to our measure $\nu_1$, we obtain a function $C_S : Y\ra \bR $ given by
\begin{equation}\label{eqn:C-constants}
 C_S(y) = \mathbb E_S C_\varphi(y) = \sum_{\varphi\in S} \nu_1(\varphi) C_\varphi(y).
\end{equation}
Notice that, as an expectation of nonnegative compactly supported simple functions, $C_S(y)$ is a nonnegative measurable extended-real valued function. We are now ready to state our condition on the family of maps:
\begin{defn}
 We say that our stochastic family of maps $(S,\nu_1)$ is \emph{$L^1$ height controlled} if the associated function $C_S : Y \ra \bR$ defined above satisfies
 \begin{equation}\label{eqn:L1-height-control}
 \int_Y C_S(y) \,d\mu(y) < \infty.
\end{equation}
\end{defn}
Notice that if a family is $L^1$ height controlled, then it is also height controlled in the sense of Healey and Hindes \cite{HealeyHindes}. 
Healey and Hindes \cite[Thm. 1.2]{HealeyHindes} defined, under this assumption, the \emph{stochastic height} $h_S$ associated to the system $(S,\nu_1)$ as
\[
 h_S(\al) = \lim_{n\ra\infty} \bE_{S^n} \frac{1}{\deg \gamma_n} h_{\gamma_n}(\al)\quad\text{for all}\quad \al\in\bP^1(\Qbar),
\]
where we recall that for a rational map $\psi$ we denote by $h_\psi$ the canonical height associated to $\psi$. In the next theorem, we will show that $h_S$ is really a height associated to a generalized adelic measure associated to $(S,\nu_1)$:
\begin{thm}\label{thm:stoch-is-gen-adelic}
 Let $S$ be a finite or countably infinite set of rational maps defined over an algebraic closure $\Qbar$, with each map being of degree at least $2$, and let $\nu_1$ be a given probability measure on $S$. If the maps in $S$ are $L^1$ height controlled, then there exists a unique generalized adelic measure $\rho$ such that the stochastic height associated to $(S,\nu_1)$ is equal to $h_\rho$.\
\end{thm}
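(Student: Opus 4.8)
The plan is to produce $\rho$ as the \emph{stationary measure} of the random system, verify the two conditions of Definition~\ref{defn:gen-adelic-measure}, prove $h_\rho=h_S$ by a telescoping computation, and finally get uniqueness from the equidistribution theorem. Throughout, fix the potentials $g_{\varphi,y}$ with $g_{\varphi,y}(\infty)=0$ and $\Delta g_{\varphi,y}=\deg(\varphi)^{-1}\varphi^*\lambda_y-\lambda_y$ as in \eqref{eqn:first-g-phi-y}.

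\medskip
\noindent\textbf{Construction and the adelic axioms.} For $\gamma=(\varphi_i)_{i\ge 1}\in S^\bN$ put $\gamma_0=\mathrm{id}$, and for $y\in Y$ set
\[
 G_{\gamma,y}(z)=\sum_{n=0}^\infty \frac{1}{\deg(\gamma_n)}\,g_{\varphi_{n+1},y}(\gamma_n(z)).
\]
Since $\deg\varphi_i\ge 2$ we have $\deg(\gamma_n)\ge 2^n$ and $|g_{\varphi_{n+1},y}|\le C_{\varphi_{n+1}}(y)$; because $\bE_\gamma\big[\sum_n 2^{-n}C_{\varphi_{n+1}}(y)\big]\le 2C_S(y)\in L^1(Y)$, for $\mu$-a.e.\ $y$ and $\nu$-a.e.\ $\gamma$ the series converges uniformly to a continuous function on $\sP^1_y$, and differentiating term by term (via $\psi^*(\Delta h)=\Delta(h\circ\psi)$) gives $\Delta G_{\gamma,y}=\mu_{\gamma,y}-\lambda_y$, where $\mu_{\gamma,y}=\lim_n \deg(\gamma_n)^{-1}\gamma_n^*\lambda_y$ is a Borel probability measure. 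Define $\rho_y:=\bE_\gamma[\mu_{\gamma,y}]=\int_{S^\bN}\mu_{\gamma,y}\,d\nu(\gamma)$, a Borel probability measure with $\infty$-normalized continuous potential $\hat g_y:=\bE_\gamma[G_{\gamma,y}]-\bE_\gamma[G_{\gamma,y}(\infty)]$; from $\mu_{\gamma,y}=\deg(\varphi_1)^{-1}\varphi_1^*\mu_{\sigma\gamma,y}$ ($\sigma=$ shift on $S^\bN$) and independence of coordinates it satisfies the \emph{stationarity relation} $\rho_y=\bE_{\varphi\sim\nu_1}\big[\deg(\varphi)^{-1}\varphi^*\rho_y\big]$. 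The map $g\colon Y(\bQ,p)\times\sP^1(\bC_p)\to[-\infty,\infty]$, $(y,z)\mapsto\hat g_y(z)$, is measurable (each $g_{\varphi,y}(w)$ is locally constant in $y$, continuous in $w$; a dominated countable sum and integration over $S^\bN$ preserve measurability), so the first condition holds; for the second, with $r:=\bE_{\varphi\sim\nu_1}[\deg(\varphi)^{-1}]\le\tfrac12$ and using independence of $\gamma_n$ and $\varphi_{n+1}$,
\[
 C(y)=\sup_z|\hat g_y(z)|\le 2\,\bE_\gamma\Big[\sum_{n\ge 0}\tfrac{1}{\deg(\gamma_n)}C_{\varphi_{n+1}}(y)\Big]=2\sum_{n\ge 0}r^n\,C_S(y)=\tfrac{2}{1-r}\,C_S(y)\le 4\,C_S(y),
\]
hence $\int_Y C\,d\mu\le 4\int_Y C_S\,d\mu<\infty$ by the $L^1$ height control hypothesis.

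\medskip
\noindent\textbf{Identifying $h_\rho$ with $h_S$.} Starting from $h_S(\alpha)=\lim_{N}\bE_{S^N}\big[\deg(\gamma_N)^{-1}h(\gamma_N(\alpha))\big]$, the telescoping identity $\deg(\gamma_N)^{-1}h(\gamma_N(\alpha))=h(\alpha)+\sum_{n=0}^{N-1}\deg(\gamma_n)^{-1}\big(\deg(\varphi_{n+1})^{-1}h(\varphi_{n+1}(\gamma_n(\alpha)))-h(\gamma_n(\alpha))\big)$, the identity $\deg(\varphi)^{-1}h(\varphi(\beta))-h(\beta)=\int_Y g_{\varphi,y}(\beta)\,d\mu(y)$ (the defining formula for $g_{\varphi,y}$ at affine non-polar $\beta$ together with the product formula killing the leading-coefficient term; polar $\beta$ and $\beta=\infty$ in homogeneous coordinates), and two applications of Tonelli (legitimate by the geometric-series majorants above), yield $h_S(\alpha)=h(\alpha)+\int_Y\bE_\gamma[G_{\gamma,y}(\alpha)]\,d\mu(y)$. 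On the other hand, expanding $(\rho_y-\delta_\alpha,\rho_y-\delta_\alpha)_y$ and using Lemma~\ref{lemma:p-is-g-plus-log} gives, for every $\alpha\in\bP^1(\Qbar)$, $h_\rho(\alpha)=h(\alpha)+\int_Y\hat g_y(\alpha)\,d\mu(y)+\tfrac12\int_Y(\rho_y,\rho_y)_y\,d\mu(y)$. Subtracting (and recalling $\hat g_y=\bE_\gamma[G_{\gamma,y}]-\bE_\gamma[G_{\gamma,y}(\infty)]$) shows $h_\rho(\alpha)-h_S(\alpha)$ is the $\alpha$-independent constant $\tfrac12\int_Y(\rho_y,\rho_y)_y\,d\mu-\int_Y\bE_\gamma[G_{\gamma,y}(\infty)]\,d\mu$; everything then comes down to showing this constant is $0$, equivalently that $h_\rho$ and $h_S$ agree at $\alpha=\infty$. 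Here the stationarity relation is the key tool: combining $(\psi^*\sigma,\psi^*\sigma)_y=\deg(\psi)(\sigma,\sigma)_y$ for mass-zero $\sigma$ with the same telescoping/product-formula bookkeeping used for $h_S$, one obtains $\tfrac12\int_Y(\rho_y,\rho_y)_y\,d\mu(y)=\int_Y\bE_\gamma[G_{\gamma,y}(\infty)]\,d\mu(y)$. (Equivalently, stationarity together with the stochastic pullback formula, Theorem~\ref{thm:pullback-formula}, shows $h_\rho$ solves the averaged functional equation $h_\rho(\alpha)=\bE_{\varphi\sim\nu_1}[\deg(\varphi)^{-1}h_\rho(\varphi(\alpha))]$, which, since $h_\rho=h+O(1)$ by Theorem~\ref{thm:gen-h-is-weil-nonneg}, forces $h_\rho=h_S$ by the Healey--Hindes characterization \cite{HealeyHindes}.)

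\medskip
\noindent\textbf{Uniqueness, and the main obstacle.} If $\rho'$ is another generalized adelic measure with $h_{\rho'}=h_S$, then $h_{\rho'}=h_\rho$ on $\bP^1(\Qbar)$, hence $h_{\rho'}(\Delta)=h_\rho(\Delta)$ for every finite-height discrete probability measure $\Delta$ on $\Qbar$ by Proposition~\ref{prop:height-average}. Taking $\Delta=\Delta_{n,\alpha}$, the backward orbit measures of a non-exceptional $\alpha$ (a well-distributed sequence with $h_\rho(\Delta_{n,\alpha})\to 0$ by the pullback formula), and applying the equidistribution Theorem~\ref{thm:gen-adelic-equi} to $\rho$ and to $\rho'$ gives $\Delta_{n,\alpha}\to\rho_y$ and $\Delta_{n,\alpha}\to\rho'_y$ weakly for $\mu$-a.e.\ $y\in Y$, whence $\rho_y=\rho'_y$ for $\mu$-a.e.\ $y$. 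The step I expect to be hardest is the vanishing of the additive constant above: it is a single scalar normalization identity, $\tfrac12\int_Y(\rho_y,\rho_y)_y\,d\mu(y)=h_S(\infty)$, but establishing it requires carefully interweaving the stationarity of $\rho$, the functoriality of the energy pairing under pullback, the product formula, and the Tonelli interchanges, and is precisely where the $L^1$ height control is used most delicately.
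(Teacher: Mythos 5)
Your construction of $\rho$ (averaging the fiberwise measures $\mu_{\gamma,y}$ over $\gamma\in S^{\bN}$, rather than taking the weak limit of the averaged pullbacks $\rho_{n,y}=\bE_{S^n}[\gamma_n^*(\lambda_y)/\deg(\gamma_n)]$ as the paper does) is a legitimate and equivalent route, your verification of the two axioms of Definition~\ref{defn:gen-adelic-measure} is sound, and your uniqueness argument via Theorem~\ref{thm:gen-adelic-equi} is essentially the paper's. The genuine gap is the identification $h_\rho=h_S$: you only obtain agreement up to an $\al$-independent additive constant and explicitly leave its vanishing open, but that is the heart of the theorem. Two further problems compound this. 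First, the identity you telescope with, $\deg(\varphi)^{-1}h(\varphi(\beta))-h(\beta)=\int_Y g_{\varphi,y}(\beta)\,d\mu(y)$, is false in general for the $\infty$-normalized potentials of \eqref{eqn:first-g-phi-y}: writing $\varphi=f/g$ in lowest terms with $\deg f=\deg g=\deg\varphi$, the normalizing constant is $\frac{1}{\deg\varphi}\log\max\{\abs{a_f}_y,\abs{a_g}_y\}$, whose integral over $Y$ is $\frac{1}{\deg\varphi}h([a_f:a_g])$ and is \emph{not} killed by the product formula (for $\varphi(z)=(2z^2+1)/(z^2+3)$ it equals $\tfrac12\log 2$); so even your formula for the constant to be killed is off. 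Second, the parenthetical shortcut does not work: $h_\rho$ is quadratic in the measure, so the stationarity relation together with the pullback formula does not yield the averaged functional equation for $h_\rho$ --- carrying out the computation with the normalized potential $\hat g_y$, that functional equation is itself equivalent to another scalar identity relating $\int_Y(\rho_y,\rho_y)_y\,d\mu$ to the values of the potentials at $\infty$, i.e.\ to a statement of exactly the kind you are trying to prove, so the route is circular.

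The paper never confronts this limiting scalar identity. It instead invokes the classical \emph{exact} one-step pullback formula \eqref{eqn:basic-height-pullback-formula}, $h_{\varphi^*(\lambda)/\deg\varphi}(\al)=\deg(\varphi)^{-1}h(\varphi(\al))$, in which all leading-coefficient/normalization terms are already accounted for, applies it at each finite level (by linearity over finite subsets of $S^n$) to get $h_{\rho_n}(\al)=\bE_{S^n}[\deg(\gamma_n)^{-1}h(\gamma_n(\al))]$ exactly, and then lets $n\ra\infty$: the right-hand side tends to $h_S(\al)$ by definition, while $h_{\rho_n}(\al)\ra h_\rho(\al)$ follows from the uniform convergence of the potentials with the geometric bound \eqref{eqn:geometric-bound} and the $L^1(Y)$ majorant $2C_S(y)$. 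If you wish to keep your fiberwise construction, the cleanest repair is the same maneuver: prove exact equality of heights at each finite level first, where every normalization constant is the logarithm of a fixed algebraic quantity handled inside the classical formula, and only then pass to the limit, rather than attacking the normalization identity $\tfrac12\int_Y(\rho_y,\rho_y)_y\,d\mu=\int_Y\bE_\gamma[G_{\gamma,y}(\infty)]\,d\mu$ directly for the limiting measure.
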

\begin{proof}
We will construct $\rho$ by defining $\rho_y$ at almost all places $y\in Y$, and checking that the resulting measure meets the desired conditions, We start by constructing the Green's function associated to the stochastic height at $y$, continuing our construction from above. We let, for each $y\in Y$,
\begin{equation}\label{eqn:g-1}
 g_{1,y}(z) = \mathbb E_S g_{\varphi,y}(z) = \sum_{\varphi\in S} \nu_1(\varphi) g_{\varphi,y}(z),
\end{equation}
where the expectation is taken over all $\varphi$ in our probability space $(S,\nu_1)$. Let $C(y)$ be the function given by \eqref{eqn:C-constants} above. 
Notice that 
\begin{equation}
 \sup_{z\in \sP^1_y} \abs{g_{1,y}(z)} \leq \bE_S C_\varphi(y) = C_S(y).
\end{equation}

By our assumption that our family is $L^1$ height controlled, we know that the set of places $y$ where $C_S(y)=\infty$ is of $\mu$-measure $0$. Let $y\in Y$ be a place where $C_S(y)<\infty$. Then from the condition $C_S(y)<\infty$, it follows from the Weierstrass $M$-test that $g_{1,y}(z)$, is a continuous function $\sP^1(\bC_y)\ra \bR$ as well. We now recursively define, for each $n \ge 1$, the functions
\begin{equation}\label{eqn:g-n}
 g_{n+1,y}(z) = \int_{S^n}\frac{1}{\deg(\gamma_n)} g_{1,y}(\gamma_n(z))\,d\nu_n(\gamma_n) = \mathbb E_{S^n} \frac{g_{1,y}(\gamma_n(z))}{\deg(\gamma_n)}.
\end{equation}

Let us recall (see for example the discussion in \cite[\S 6.1]{FRL} or \cite[Prop. 9.54]{BakerRumelyBook}) that for any potential function $g(z)$,
\begin{equation}
 \varphi^*(\Delta g) = \Delta( g\circ \varphi).
\end{equation}
It follows that
\begin{equation}\label{eqn:g-n-plus-1-laplacian}
 \Delta g_{n+1,y} = \bE_{S^n} \frac{\gamma_n^*(\Delta g_{1,y})}{\deg(\gamma_n)},
\end{equation}
where the expectation is taken in $(S^n, \nu_n)$ and we denote the random variable by $\gamma_n$. Combining with \eqref{eqn:first-g-phi-y} and using the linearity of the pullback, we see that:
\begin{align}\label{eqn:g-n-plus-1-formula}
\begin{split}
 \Delta g_{n+1,y} &= \bE_{S^n} \bE_S \frac{\gamma_n^*(\varphi^*(\lambda_y))}{\deg(\gamma_n)\deg(\varphi)} - \bE_{S^n} \frac{\gamma_n^*(\lambda_y)}{\deg(\gamma_n)}\\
 &= \bE_{S_{n+1}} \frac{\gamma_{n+1}^*(\lambda_y)}{\deg(\gamma_{n+1})} - \bE_{S^n} \frac{\gamma_n^*(\lambda_y)}{\deg(\gamma_n)}\\
 &= \rho_{n+1,y} - \rho_{n,y},
 \end{split}
\end{align}
where we set
\begin{equation}\label{eqn:rho-n-y}
 \rho_{n,y} = \bE_{S^n} \frac{\gamma_n^*(\lambda_y)}{\deg(\gamma_n)}\quad\text{for each}\quad n\geq 1,\quad\text{and}\quad \rho_{0,y} = \lambda_y.
\end{equation}
Since the pullback of a positive Borel measure $\rho$ under a rational map $\varphi$ has mass $\varphi^*(\rho)(\sP^1_y) = \deg(\varphi) \cdot \rho(\sP^1_y)$, we have that each $\rho_{n,y}$ is a probability measure, and thus $\Delta g_{n+1,y}$ is the difference of two probability measures.

As our initial $g_{1,y}$ function is bounded, it follows that the $g_{n,y}$ functions are also bounded. Indeed, 
\[
 \abs{g_{2,y}(z)} \leq \bE_S \frac{\abs{g_{1,y}(\varphi(z))}}{\deg \varphi}\leq C_S(y) \bE_S \frac{1}{\deg \varphi} = \frac{C_S(y)}{\delta_S},
\]
where we define, following Healey and Hindes, the \emph{stochastic degree} $\delta_S$ to be the harmonic mean of the degrees of our rational maps:
\begin{equation}\label{eqn:delta-degree}
 \delta_S = \bigg( \bE_S \frac{1}{\deg \varphi}\bigg)^{-1} \geq 2,
\end{equation}
where the last inequality follows from the fact that $\deg \varphi\geq 2$ for all $\varphi\in S$ by assumption. If follows by induction using \eqref{eqn:g-n-plus-1-laplacian} that
\begin{equation}\label{eqn:geometric-bound}
 \abs{g_{n,y}(z)} \leq \frac{C_S(y)}{\delta_S^{n-1}}
\end{equation}
for all $n\geq 1$. 
Finally, we will define our stochastic telescoping series to be:
\begin{equation}\label{eqn:g-S}
 g_{S,y}(z) = \sum_{n=1}^\infty g_{n,y}(z).
\end{equation}
By the Weierstrass $M$-test and \eqref{eqn:geometric-bound}, for all $y\in Y$ for which $C_S(y)<\infty$, this series converges uniformly to a continuous function $g_{S,y} : \sP^1_y\ra\bR$. Further, if we define
\begin{equation}
 C(y) = \sup_{z\in \sP^1_y} \abs{g_{S,y}(z)},
\end{equation}
then by our telescoping construction, 
\[
 C(y) \leq 2 C_S(y)
\]
for every $y\in Y$, and so since we assumed our family $(S,\v_1)$ was $L^1$ height controlled, we have that $C_S\in L^1(Y)$, so $C\in L^1(Y)$, and $C(y)<\infty$ is true for $\mu$-almost every $y\in Y$, which is what we desired.

We now proceed to prove that, when restricted to $y\in Y(\mathbb Q,p)$, the function $g_S(y,z) = g_{S,y}(z)$ defines a measurable function
\begin{equation*}
 g_S : Y(\bQ, p) \times \sP^1(\bC_p) \ra \bR
\end{equation*}
for every rational prime $p\in M_\mathbb Q$. To see this, note that the functions $g_{\varphi}(y,z) = g_{\varphi,y}(z) : Y(\bQ,p)\times \sP^1(\bC_p)$ are measurable (in fact, continuous), as any given $\varphi$ is defined over a single number field, so they are naturally locally constant in the $y$ variable over the sets $\{Y(K,v) : v\in M_K,\ v\mid p\}$, and their potential functions are well-known to be continuous. It follows that, as a limit of measurable functions, $g_{1,y}$ is measurable in both variables, and thus likewise so are $g_{n,y}$ and $g_S$.

We now define the measures $\rho_{y}$ for each place $y\in Y$ by
\begin{equation}\label{eqn:laplacian-of-stoch-greens}
 \Delta g_{S,y}(z) = \rho_{y} - \lambda_y.
\end{equation}
This is well-defined, as the negative part of $\Delta g_{S,y}(z)$ in the Jordan decomposition is indeed $\lambda_y$, since from the first construction of $g_{\varphi,y}$ in equation \eqref{eqn:first-g-phi-y}, the negative part of the measure was $\lambda_y$. As we formed the telescoping series, the negative part remained $\lambda_y$, and as we took the expectation over $\varphi$, it remained so as well. Since the associated potential functions $g_S$ meet the criteria of Definition~\ref{defn:gen-adelic-measure}, it follows that $\rho = (\rho_y)_{y\in Y}$ is a generalized adelic measure. Further, it is clear that
\begin{equation}
 \Delta \bigg(\sum_{n=1}^N g_{n,y}\bigg) = \rho_{N,y} - \lambda_y,
\end{equation}
and since $\sum_{n=1}^N g_{n,y}(z) \ra g_{S,y}(z)$ uniformly as $N\ra\infty$, it follows (e.g., from \cite[Prop. 5.32]{BakerRumelyBook}) that $\rho_{N,y} \ra \rho_y$ in the weak sense of convergence of measures.

Lastly, we will establish that $h_{\rho} = h_S$. To see this, recall that for a rational function $\varphi$ defined over a number field $K$, if we pull back the standard adelic measure $\lambda$ \emph{once} by the map $\varphi$, we obtain an adelic measure (\emph{not} yet the canonical measure associated to $\varphi$, in general) given by 
\[
 \rho = \frac{\varphi^*(\lambda)}{\deg \varphi},
\]
and we have 
\begin{equation}\label{eqn:basic-height-pullback-formula}
 h_\rho(\al) = \frac{1}{\deg \varphi} h(\varphi(\al))
\end{equation}
for all $\al\in\bP^1(\Qbar)$. In other words, the effect of pulling back once by $\varphi$ and normalizing by the degree of $\varphi$ was to obtain the normalized standard height of $\varphi(\al)$ instead of $\al$. If we took more pullbacks under $\varphi$, we would obtain in the limit the canonical $\varphi$-height of $\al$. In our setting, it follows by taking a limit over the finite subsets of $S^n$ ordered under inclusion that the resulting height is still defined over a number field, and our formula from \eqref{eqn:basic-height-pullback-formula} applies, that the generalized adelic measures $\rho_n = (\rho_{n,y})_{y\in Y}$ satisfy
\[
 h_{\rho_n}(\al) = \bE_{S^n} \frac{1}{\deg \gamma_n} h_{\gamma_n}(\al)\quad\text{for all}\quad \al\in\bP^1(\Qbar),
\]
and so we get that $h_\rho = h_S$, as claimed. Unicity follows easily from the equidistribution result proven below in Theorem \ref{thm:stoch-equi}.
\end{proof}

\subsection{An example with places without continuous potentials}
A key fact that comes out of the proof of our equidistribution theorem for heights associated to generalized adelic measures is that it holds at the places where we have a continuous potential function $g_y : \sP^1_y \ra \bR$ with $\Delta g_y = \rho_y - \lambda_y$. Our hypothesis that our stochastic family is $L^1$ height controlled guarantees that this holds at almost every place; however, it is possible to construct families where there is a nonempty set of places, albeit still of $\mu$-measure zero, for which the potential functions are not bounded, and therefore not continuous, and therefore for which our equidistribution results will \emph{not} apply. Note that this phenomenon cannot occur with adelic measures and quasi-adelic measures, as both of these constructions are defined over a single given number field, and there are no places of $\mu$-measure zero in that context, as each place $v$ of a number field $K$ has $\mu$-measure 
\[
 \mu(Y(K,v)) = \frac{[K_v:\bQ_v]}{[K:\bQ]}>0.
\]
We now construct an example of a stochastic dynamical system where there is a nonempty, measure zero set of places for which this continuous potential function will not exist, and in particular, for which our equidistribution results will not apply.

\begin{example}\label{ex:bad-places}
Let $\tau_n$ be a sequence of Salem numbers such that $\tau_n\searrow \theta_0$ as $n\ra\infty$, where $\theta_0$ is the real root of $x^3-x-1$. That such a sequence exists is consequence of the fact that $\theta_0$ is a Pisot-Vijayaraghavan number and a result of Salem \cite[Theorem IV]{Salem}. By Northcott's theorem, as the heights of the $\tau_n$ are bounded (recall that for a Salem number $\tau>1$, $h(\tau)=(\log \tau)/[\bQ(\tau):\bQ]$), we must have $[\bQ(\tau_n):\bQ]\ra\infty$. Therefore we can choose $\tau_n$ satisfying $[\bQ(\tau_n):\bQ]\geq 3^n$ for every $n$, and we will assume that we have done so. 

Let $K_n = \bQ(\tau_1,\ldots,\tau_n)$. By our choice of $\tau_n>1$ we can assume that for each $n\in\bN$, there is a place $v_{n}$ of $K_n$ such that $\abs{\tau_{i}}_{v_{n}}>1$ for each $1\leq i\leq n$, and that $Y(K_{n+1},v_{n+1})\subset Y(K_n,v_n)$. Notice that, as an intersection of nested nonempty compact sets in a profinite space,
\[
 N := \bigcap_{n=1}^\infty Y(K_n,v_n) \neq \varnothing.
\]
We will show that at all of the places of $N$, $\rho_{S,y}$ will fail to admit a continuous potential for the stochastic dynamical system which we will define. Notice that as there is just one real place of $\bQ(\tau_n)$ where the absolute value is positive, we have 
\[
 \mu(Y(K_n,v_n)) \leq \frac{1}{[\bQ(\tau_n):\bQ]} \leq \frac{1}{3^n},
\]
so in particular, it follows that $\mu(N)=0$.

Define $d_n = [\bQ(\tau_n):\bQ]$ and let 
$
 \al_n = \tau_n^{d_n}.
$ Notice that 
\[
 h(\al_n) = \log \tau_n \searrow \log \theta_0\quad\text{as}\quad n\ra\infty.
\]
 Define for $n\in\bN$ the maps
 \[
  \varphi_n(z) = \al_n^2 z^2.
 \]
 Let $S = \{\varphi_n : n\in \bN\}$, and endow $S$ with the probability measure given by $\v_1(\varphi_n) = 1/2^n$. We will show that the family $(S,\v_1)$ is an $L^1$ height controlled family, and so there exists an adelic measure $\rho_S$ such that for $\mu$-almost every $y\in Y$, there exists a continuous function $g_y: \sP^1_y\ra\bR$ such that $\Delta g_y = \rho_{S,y} - \lambda_y$.
 
 As $\al_n$ is an algebraic unit, $\varphi_n$ has good reduction at all finite places. Thus, $\rho_{S,y}=\lambda_y$ for all $y\nmid \infty$, and $g_y$ is the identically $0$ function at all finite $y$, and hence $C(y)=0$ as well at these places. Now consider $y\in Y(\bQ,\infty)$. Notice that if we define 
 \[
  p(\gamma_k,z) = \frac{1}{\deg(\gamma_k)} \log^+\,\abs{\gamma_k(z)}_y - \log^+\,\abs{z}_y,
 \]
 then 
 \[
 \Delta p(\gamma_k, z) = \frac{\gamma_k^*(\lambda_y)}{\deg(\gamma_k)} - \lambda_y.
 \]
 It follows that, up to a constant $c\in\bR$, 
 \[
 g_y(z) = c + \lim_{k\ra\infty} \bE_{S^k}\ p(\gamma_k,z),
 \]
 where the expectation is taken over $\gamma_k\in S^k$. We say \textit{up to a constant} because, as the reader will recall, we assume that $g_y(z)$ is normalized at every place so that $g_y(z)\ra 0$ as $z\ra\infty$, but as we will observe, that requires here adjusting our choice of $p$ function. To see this, note that if $\gamma_k = (\varphi_{n_1}, \ldots, \varphi_{n_k})$, then 
 \[
 p(\gamma_k,z) = \log^+\, \abs{\al_{n_k}^{1/2^{k-1}} \cdots \al_{n_2}^{1/2} \al_{n_1} z}_y - \log^+\,\abs{z}_y,
 \]
 so in order to assume that $p(\gamma_k,z)\ra 0$ as $z\ra\infty$, we define
 \[
 p_0(\gamma_k,z) =  p(\gamma_k,z) - \log\,\abs{\al_{n_k}^{1/2^{k-1}} \cdots \al_{n_2}^{1/2} \al_{n_1}}_y,
 \]
 and then we see that $g_y(z) = \lim_{k\ra\infty} \bE_{S^k}\ p_0(\gamma_k,z)$ and $g_y$ is normalized as we desired. It easy to check that for $c>0$, the function $\abs{\log^+\,\abs{cz}_y-\log^+\,\abs{z}_y - \log c}$ has maximum $\abs{\log c}$, and so it follows that 
\[
 \sup_{z\in\sP^1_y} \abs{p_0(\gamma_k,z)} = \abs{p_0(\gamma_k,0)} = \big| \log\, \abs{\al_{n_k}^{1/2^{k-1}} \cdots \al_{n_2}^{1/2} \al_{n_1}}_y\big|.
\]
 Now as each component of $\gamma_k$ is chosen independently in $S$, it follows that 
 \[
 \bE_{S^k} \abs{p_0(\gamma_k,0)} = \frac{2^k-1}{2^k} \sum_{n=1}^\infty \v_1(\varphi_n) \log\, \abs{\al_n}_y = \frac{2^k-1}{2^k} \sum_{n=1}^\infty \frac{1}{2^n} \big|\log\, \abs{\al_n}_y\big|.
 \]
 So in particular, we get that
 \begin{equation}
  C(y) = \sup_{z\in\sP^1_y} \abs{g_y(z)} = \abs{g_y(0)} \leq  \sum_{n=1}^\infty \frac{1}{2^n} \big|\log\, \abs{\al_n}_y\big|.
 \end{equation}
 Notice that for all $y\in N$, we have $\abs{p_0(\gamma_k,0)} = -p_0(\gamma_k,0)$ for every $\gamma_k$, as $\abs{\al_n}_y>1$ for all $n\geq 1$ and $y\in N$, so in fact
 \[
 -g_y(0) = \sum_{n=1}^\infty \frac{1}{2^n} \log\, \abs{\al_n}_y\geq 
 \sum_{n=1}^\infty \frac{1}{2^n} \log\, d_n \log \tau_n \geq \sum_{n=1}^\infty \frac{1}{2^n} \log\, 3^n \log \theta_0 = \infty,
 \]
 so our potential functions cannot be bounded for the places $y\in N$. 
 
 Now it remains to show that $C(y)$ is still an integrable function on $Y(\bQ,\infty)$ in order to prove that our family is still integrable. To see this, notice that by the product formula, 
 \[
 \int_{Y(\bQ,\infty)} \log\, \abs{\al_n}_y\,d\mu(y) = 0,
 \]
 and thus, as our $\al_n$ is an algebraic unit, 
 \[
  \int_{Y(\bQ,\infty)} \big|\log\, \abs{\al_n}_y\big| \,d\mu(y) = 2h(\al_n)
 \]
 (cf. \cite[Equation 1.3]{AV}). It follows that 
 \begin{align*}
 \int_Y C(y)\,d\mu(y) &\leq  \sum_{n=1}^\infty \frac{1}{2^n} \int_{Y(\bQ,\infty)} \big|\log\, \abs{\al_n}_y\big|\,\,d\mu(y)\\
 &= \sum_{n=1}^\infty \frac{2h(\al_n)}{2^n} \leq 2\sup_{n}\log \tau_n,
 \end{align*}
 but this quantity is bounded as $\tau_n \searrow \theta_0$. It follows that $(S,\v_1)$ is $L^1$ height controlled as claimed. 
 
 The reader may find it interesting to note as well that the Borel-Cantelli lemma implies that for $\mu$-almost all $y\in Y(\bQ,\infty)$, $\abs{\al_n}_y=1$ for all but finitely $n$. We note that our set $N$ lies in the exceptional set where infinitely many $\abs{\al_n}_y>1$.
\end{example}

\subsection{Pullback formula for stochastic heights}
The stochastic generalized adelic measure shares some, but not all, of the properties of the usual dynamical measure associated to iteration of a single rational map. For example, if $\mu_{\varphi,y}$ denotes the canonical measure of $\varphi$ at a place $y$, then it is well-known (cf. \cite[Theorem 10.2]{BakerRumelyBook}) that
\[
 \mu_{\varphi,y} = \frac{\varphi^*(\mu_{\varphi,y})}{\deg \varphi} \quad\text{and}\quad \varphi_*(\mu_{\varphi,y}) = \mu_{\varphi,y}.
\]
We will prove a similar statement for the expected pullback of the stochastic canonical measure. However, the analogous stochastic pushforward formula does \emph{not} hold in general for stochastic heights, and this is tied to the fact that the stochastic Fatou and Julia sets are not fully invariant under the maps $\varphi \in S$, which will be proved in the forthcoming paper \cite{DFT_II}.
\begin{thm}[Pullback formula for stochastic heights]\label{thm:pullback-formula}
 Let $(S,\nu_1)$ be an $L^1$ height controlled stochastic family of rational maps, each of degree at least $2$, and let $\rho$ denote the canonical generalized adelic measure associated to $S$. Then for $\mu$-almost every place $y\in Y$,
 \[
  \rho_y = \bE_S \frac{\varphi^*(\rho_y)}{\deg \varphi}, 
 \]
 where the expectation is taken over all maps $\varphi\in S$ with respect to the measure $\nu_1$.
\end{thm}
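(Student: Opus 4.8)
The plan is to derive the pullback formula from a fixed-point identity satisfied by the telescoping potential $g_{S,y}=\sum_{n\ge 1}g_{n,y}$ constructed in the proof of Theorem~\ref{thm:stoch-is-gen-adelic}. Throughout, fix a place $y\in Y$ with $C_S(y)<\infty$; by the $L^1$ height control hypothesis this holds for $\mu$-almost every $y$, and at such $y$ all of $g_{n,y}$, $g_{S,y}$, $\rho_{n,y}$, $\rho_y$ are defined and the series converge uniformly, so that all of the manipulations below are legitimate.

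The first step is to establish the recursion
\[
 g_{n+1,y}(z) = \bE_S\,\frac{g_{n,y}(\varphi(z))}{\deg\varphi}\qquad\text{for all }n\ge 1.
\]
This follows from the defining formula $g_{n+1,y}(z)=\bE_{S^n}\bigl(g_{1,y}(\gamma_n(z))/\deg\gamma_n\bigr)$ by peeling off the \emph{innermost} map of the composition: writing $\gamma_n=\psi\circ\varphi_1$ with $\psi=\varphi_n\circ\cdots\circ\varphi_2$ a composition of $n-1$ maps, using the independence of $\varphi_1$ from $(\varphi_2,\dots,\varphi_n)$, the multiplicativity $\deg\gamma_n=\deg\varphi_1\cdot\deg\psi$, and relabeling $(\varphi_2,\dots,\varphi_n)$ as a generic element of $S^{n-1}$. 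Summing this identity over $n\ge 1$, and using the geometric bound $|g_{n,y}(z)|\le C_S(y)/\delta_S^{\,n-1}$ with $\delta_S\ge 2$ to justify (via the Weierstrass $M$-test) interchanging the sum over $n$ with the expectation over $S$, I obtain the fixed-point identity
\[
 g_{S,y}(z) = g_{1,y}(z) + \bE_S\,\frac{g_{S,y}(\varphi(z))}{\deg\varphi}.
\]

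The second step is to apply the normalized Laplacian $\Delta$ to this identity. On the left, $\Delta g_{S,y}=\rho_y-\lambda_y$ by construction, and $\Delta g_{1,y}=\rho_{1,y}-\lambda_y=\bE_S\bigl(\varphi^*(\lambda_y)/\deg\varphi\bigr)-\lambda_y$, both from Theorem~\ref{thm:stoch-is-gen-adelic}. For the remaining term I would write $h(z)=\bE_S\bigl(g_{S,y}(\varphi(z))/\deg\varphi\bigr)$ as the uniform limit of the partial sums $h_N$ over an exhaustion of $S$ by finite subsets $S_N$ (the tail is dominated by $\sum_\varphi\nu_1(\varphi)\,C(y)$, since $\deg\varphi\ge 2$ and $|g_{S,y}|\le C(y)$), so that $\Delta h_N\to\Delta h$ weakly by continuity of the Laplacian (cf.\ \cite[Prop.~5.32]{BakerRumelyBook}); on the other hand $\Delta h_N=\sum_{\varphi\in S_N}\nu_1(\varphi)\,\varphi^*(\rho_y-\lambda_y)/\deg\varphi$ (using $\varphi^*(\Delta g)=\Delta(g\circ\varphi)$) converges weakly to $\bE_S\bigl(\varphi^*(\rho_y-\lambda_y)/\deg\varphi\bigr)$ by dominated convergence for the series, since each term tested against a continuous $f$ is bounded by $2\nu_1(\varphi)\|f\|_\infty$ (the measures $\varphi^*\rho_y$ and $\varphi^*\lambda_y$ have total mass $\deg\varphi$). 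Equating the two sides of $\Delta g_{S,y}=\Delta g_{1,y}+\Delta h$ and cancelling the common term $\bE_S\bigl(\varphi^*(\lambda_y)/\deg\varphi\bigr)$ leaves $\rho_y=\bE_S\bigl(\varphi^*(\rho_y)/\deg\varphi\bigr)$, which is the claim.

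The only genuine technical point — and thus the main obstacle — is the legitimacy of the two interchanges of the (possibly infinite) expectation over $S$ with, respectively, the summation over $n$ and the Laplacian; both are handled exactly as in the construction of $g_{S,y}$, using the $L^1$ height control bounds together with the boundedness of the masses of pullback measures, so no new idea is required, only care with the order of limits. As a backup I would keep in reserve the alternative route of proving the recursion $\rho_{n+1,y}=\bE_S\bigl(\varphi^*(\rho_{n,y})/\deg\varphi\bigr)$ directly at the level of the measures $\rho_{n,y}=\bE_{S^n}\bigl(\gamma_n^*(\lambda_y)/\deg\gamma_n\bigr)$ by the same peeling-off argument, and then passing to the limit using $\rho_{n,y}\to\rho_y$ weakly together with the weak continuity of $\varphi^*$ under a fixed rational map; this avoids the potential-function bookkeeping at the cost of invoking weak continuity of pullback.
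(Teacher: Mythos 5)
Your proposal is correct and is essentially the paper's argument: the paper applies $\bE_S\,\varphi^*(\cdot)/\deg\varphi$ to the telescoping partial sums $\Delta g_{S,y,N}=\rho_{N,y}-\lambda_y$, uses the measure-level recursion $\bE_S\,\varphi^*(\rho_{N,y})/\deg\varphi=\rho_{N+1,y}$ (precisely your ``backup'' route), lets $N\to\infty$, and cancels $\rho_{1,y}$ exactly as you do. Your main route---first deriving the fixed-point identity $g_{S,y}=g_{1,y}+\bE_S\,g_{S,y}\circ\varphi/\deg\varphi$ and then taking a single Laplacian---is just a repackaging of the same peeling/telescoping idea at the level of potentials, though it spells out the limit interchanges (the geometric bound in $n$, dominated convergence over $S$, and weak convergence of Laplacians under uniform convergence of potentials) somewhat more explicitly than the paper's brief ``taking the limit as $N\to\infty$.''
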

We note that, as usual, in the case that $\rho$ is defined over a single number field $K$, the $\mu$-almost every place condition becomes every place of $K$, as every place $v$ of $K$ gives rise to a set $Y(K,v)$ of positive $\mu$ measure.
\begin{proof}
 Recall from above that we defined in \eqref{eqn:g-S} the Green's functions $g_{S,y}$, whose Laplacians satisfy $\Delta g_{S,y}(z) = \rho_{y} - \lambda_y$ at each place, provided that $C(y)<\infty$ at that place. As $C\in L^1(Y)$ by assumption that $S$ is $L^1$ height controlled, this condition holds for $\mu$-almost every place $y$. We will use the same notation as in the proof of Theorem \ref{thm:stoch-is-gen-adelic}, that is, the partial sum of $g_{S,y}(z)$ is denoted
 \[
  g_{S,y,N}(z) = \sum_{n=1}^N g_{n,y}(z),
 \]
 where for each $n\geq 1$, $g_{n,y}$ has Laplacian 
 \[
  \Delta g_{n,y} = \rho_{n,y}-\rho_{n-1,y},
 \]
 where $\rho_{0,y} = \lambda_y$ and $\rho_{n,y}$ is defined in \eqref{eqn:rho-n-y} as the expected pullback measure of $\lambda_y$ under the maps $\gamma\in S^N$ weighted by $1/\deg(\gamma)$.  It follows that the partial sum forms a telescoping series and has Laplacian
 \[
  \Delta g_{S,y,N} = \rho_{N,y} - \lambda_y.
 \]
 Notice that, as occurred above in the computation of  \eqref{eqn:g-n-plus-1-formula}, 
 \[
  \bE_S \frac{\varphi^*(\rho_n)}{\deg \varphi} = \rho_{n+1,y},
 \]
 so it follows that
 \[
  \bE_S \frac{\varphi^*(\Delta g_{S,y,N})}{\deg \varphi} = \rho_{N+1,y} - \rho_{1,y}.
 \]
 Taking the limit as $N\ra\infty$, we see that 
 \begin{equation}\label{eqn:pullback-of-g-S}
  \bE_S \frac{\varphi^*(\Delta g_{S,y})}{\deg \varphi} = \rho_{y} - \rho_{1,y}.
 \end{equation}
 But as $
  \Delta g_{S,y} = \rho_y-\lambda_y,
 $
 we have
 \begin{align}\label{eqn:pullback-of-g-S-direct}
  \begin{split}
  \bE_S \frac{\varphi^*(\Delta g_{S,y})}{\deg \varphi} 
  &=  \bE_S\frac{\varphi^*(\rho_{y})}{\deg \varphi} - \bE_S \frac{\varphi^*(\lambda_y)}{\deg \varphi}\\
  &= \bE_S\frac{\varphi^*(\rho_{y})}{\deg \varphi} - \rho_{1,y}. 
  \end{split}
 \end{align}
 Combining equations \eqref{eqn:pullback-of-g-S} and \eqref{eqn:pullback-of-g-S-direct}, we see that
 \begin{equation}
  \bE_S \frac{\varphi^*(\rho_y)}{\deg \varphi} = \rho_y,
 \end{equation}
 which is what we wanted to show. 
\end{proof}

\subsection{Stochastic dynamical equidistribution}
We are now ready to state our main equidistribution theorem. We start by defining the random backwards orbit measure of a point:
\begin{defn}
 Let $\al\in\Qbar$. We define the \emph{random backwards orbit measure} of $\al$ under the stochastic dynamical system $(S,\nu_1)$ recursively by 
 $
  \Delta_{0,\al} = \delta_\al, 
 $
 and 
 \[
  \Delta_{n+1,\al} = \bE_S \bigg( \frac{\varphi^*(\Delta_n)}{\deg(\varphi)}\bigg) \quad\text{for}\quad n\geq 0.
 \]
 Here, $\delta_z$ denotes the Dirac point mass at $z$, and $\varphi^*(\Delta)$ denotes the pullback of the measure $\Delta$ under $\varphi$. Note that for a point mass $\delta_z$ we have
 \[
  \varphi^*(\delta_z) = \sum_{w\in \varphi^{-1}(z)} \delta_w,
 \]
 where the $w$ in the sum are counted with multiplicity, so that we must divide by the degree of $\varphi$ to obtain a new probability measure.
\end{defn}
Notice that, so long as the measure $\v_1$ is strictly positive, the support of $\Delta_{n.\alpha}$ is
 \[
  \supp(\Delta_{n,\al}) = \bigcup_{\gamma_n\in S^n} \gamma_n^{-1}(\al),
 \]
and the weights come from the probability of the individual likelihood of each $\gamma_n$ in $S^n$ under the product measure, along with the multiplicities as preimages under $\gamma_n$.

\begin{thm}[Stochastic dynamical equidistribution]\label{thm:stoch-equi}
 Let $S$ be a countable set of rational maps defined over an algebraic closure $\Qbar$, with each map being of degree at least $2$, and let $\nu_1$ be a probability measure on $S$ with respect to which the maps in $S$ are $L^1$ height controlled. Suppose $\al\in \bP^1(\Qbar)\setminus E_S$ where $E_S$ denotes the exceptional set of the stochastic dynamical system. Then for $\mu$-almost every place $y$ of $\Qbar$, the backwards orbit measures $\Delta_{n,\al}$ under $(S,\v_1)$ converge weakly to the stochastic dynamical measure $\rho_y$ as $n\ra\infty$.
\end{thm}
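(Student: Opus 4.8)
The plan is to deduce Theorem~\ref{thm:stoch-equi} from the general equidistribution statement, Theorem~\ref{thm:gen-adelic-equi}, applied to the generalized adelic measure $\rho$ attached to $(S,\v_1)$ by Theorem~\ref{thm:stoch-is-gen-adelic}, so that $h_\rho=h_S$. After discarding the maps with $\v_1(\varphi)=0$ (which change neither the measures $\Delta_{n,\al}$ nor $\rho$) we may assume $\v_1$ has full support. Two inputs must then be supplied for the sequence $(\Delta_{n,\al})$: it is well-distributed in the sense of Definition~\ref{defn:well-distributed}, and $h_\rho(\Delta_{n,\al})\to 0$. Along the way one records, by induction on $n$ starting from $\Delta_{0,\al}=\delta_\al$ (Lemma~\ref{lemma:finite-Delta-have-finite-height}) and using the identity $\Delta_{n+1,\al}=\bE_S\,\deg(\varphi)^{-1}\varphi^*(\Delta_{n,\al})$ together with the $L^1$ height control hypothesis and the fact that $\int_Y\sum_{w\in\varphi^{-1}(v)}\log^+|w|_y\,d\mu=h(v)+O_\varphi(1)$, that every $\Delta_{n,\al}$ has finite height; hence $h_\rho(\Delta_{n,\al})$ is defined and finite (Corollary~\ref{cor:finite-height-is-same-for-all-heights}).

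For the small-height statement the heart of the matter is a one-step contraction estimate
\[
 h_\rho(\Delta_{n+1,\al})\ \le\ \frac{1}{\delta_S}\,h_\rho(\Delta_{n,\al}),
\]
where $\delta_S\ge 2$ is the stochastic degree of \eqref{eqn:delta-degree}. Granting it, since $\Delta_{n,\al}=\bE_{S^n}\,\deg(\gamma_n)^{-1}\gamma_n^*(\delta_\al)$ gives $h_\rho(\Delta_{n,\al})=\bE_{S^n}\big[\deg(\gamma_n)^{-1}\sum_{w\in\gamma_n^{-1}(\al)}h_S(w)\big]\ge 0$ by Proposition~\ref{prop:height-average} and $h_S\ge 0$, induction from $h_\rho(\delta_\al)=h_S(\al)<\infty$ yields $0\le h_\rho(\Delta_{n,\al})\le \delta_S^{-n}h_S(\al)\to 0$. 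To prove the contraction, combine $\Delta_{n+1,\al}=\bE_S\,\deg(\varphi)^{-1}\varphi^*(\Delta_{n,\al})$ with the pullback formula for $\rho$ (Theorem~\ref{thm:pullback-formula}), which at $\mu$-almost every place $y$ gives $\rho_y-\Delta_{n+1,\al}=\bE_S\,\deg(\varphi)^{-1}\varphi^*(\rho_y-\Delta_{n,\al})$, so that with $v_\varphi:=\deg(\varphi)^{-1}\varphi^*(\rho_y-\Delta_{n,\al})$ one has $2h_\rho(\Delta_{n+1,\al})=\int_Y\bE_S\bE_{S'}(v_\varphi,v_\psi)_y\,d\mu$. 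Using the variance identity $\bE_S\bE_{S'}(v_\varphi,v_\psi)_y=\bE_S(v_\varphi,v_\varphi)_y-\tfrac12\bE_S\bE_{S'}(v_\varphi-v_\psi,v_\varphi-v_\psi)_y$, the scaling $(\varphi^*\tau,\varphi^*\tau)_y=(\deg\varphi)(\tau,\tau)_y$ for mass-zero $\tau$ (valid after integration over $Y$, the pointwise discrepancy being a sum of $\log|\cdot|_y$ of resultant- and discriminant-type algebraic numbers that the product formula annihilates, exactly as in the proof of Lemma~\ref{lemma:discrs-int-to-0}), and $\int_Y(\Delta,\Delta)_y\,d\mu=0$ for finite-height $\Delta$, the first term integrates to $\delta_S^{-1}\cdot 2h_\rho(\Delta_{n,\al})$ and the claim reduces to nonnegativity of $\int_Y(v_\varphi-v_\psi,v_\varphi-v_\psi)_y\,d\mu$. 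This last point — the place where the discreteness of $\Delta_{n,\al}$ forbids a naive appeal to positivity of the energy form — is the step I expect to be the main obstacle; I would handle it by $\ep$-regularizing the discrete parts so that all measures involved acquire continuous potentials (Lemma~\ref{lemma:reg-has-cts-potential}), applying \cite[Prop.\ 2.6,\ 4.5]{FRL}, and passing to the limit $\ep\to 0$ with the potential estimates of Lemma~\ref{lemma:lower-bound-on-Delta-n-height-pairing}.

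For the well-distributed property one must show $a_n:=\sum_{w\in\supp\Delta_{n,\al}}\Delta_{n,\al}(\{w\})^2\to 0$. Writing $\Delta_{n+1,\al}(\{w\})=\sum_{\varphi\in S}\v_1(\varphi)\,\tfrac{e_\varphi(w)}{\deg\varphi}\,\Delta_{n,\al}(\{\varphi(w)\})$, where $e_\varphi(w)$ is the ramification index of $\varphi$ at $w$, a Cauchy--Schwarz step shows $a_{n+1}\le a_n$, with strict contraction by a factor $\le 1/\delta_S$ coming from every atom of $\Delta_{n,\al}$ lying over which some $\varphi\in S$ is not totally ramified. Because a rational map of degree $\ge 2$ has at most two totally ramified target points (Riemann--Hurwitz), the set $T^{**}$ of points $v$ with $|\gamma_m^{-1}(v)|=1$ for every $m$ and every $\gamma_m\in S^m$ has at most two elements and is backward invariant, hence $T^{**}\subseteq E_S$; since $\al\notin E_S$, the backward orbit $\cO^-_S(\al)$ is infinite and cannot be confined to $T^{**}$. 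A Markov/pigeonhole argument then shows that if $a_n$ did not tend to $0$ the mass of $\Delta_{n,\al}$ would have to concentrate, asymptotically, on a finite totally-ramified set, i.e.\ on $T^{**}$, contradicting $\al\notin E_S$; hence $a_n\to 0$.

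With $(\Delta_{n,\al})$ well-distributed and $h_\rho(\Delta_{n,\al})\to 0$, Theorem~\ref{thm:gen-adelic-equi} gives $\Delta_{n,\al}\to\rho_y$ weakly for $\mu$-almost every place $y\in Y$, which is the assertion of Theorem~\ref{thm:stoch-equi}; when $\rho$ is defined over a single number field the exceptional null set of places disappears and the convergence holds at every place.
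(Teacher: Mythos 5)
Your overall skeleton is the same as the paper's: establish that the measures $\Delta_{n,\al}$ are well-distributed and that their stochastic heights tend to $0$, then invoke Theorem~\ref{thm:gen-adelic-equi}. But both inputs, as you handle them, have real gaps. For the height decay the paper gets $h_S(\Delta_{n+1,\al})\le\tfrac12 h_S(\Delta_{n,\al})$ in a few lines from the pushforward scaling property (Proposition~\ref{prop:stochastic-degree-scaling}) together with linearity of the height (Proposition~\ref{prop:height-average}); you instead route through the pullback formula and bilinearity/positivity of the energy pairing. That route needs three things you do not supply: (i) a global pullback functoriality of the energy for mass-zero measures with atomic parts --- note that the pointwise discrepancy is not only resultant-type constants, since the diagonal excluded downstairs corresponds upstairs to pairs of \emph{distinct} preimages of a common atom, so the cancellation requires a Tonelli-plus-product-formula argument in the style of Lemma~\ref{lemma:discrs-int-to-0} that is not set up; (ii) justification for exchanging the countable expectation over $S$ with the pairing and with $\int_Y$; and (iii) the positivity $\int_Y(v_\varphi-v_\psi,v_\varphi-v_\psi)_y\,d\mu\ge 0$, which you yourself flag as the main obstacle. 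Your plan to regularize and let $\ep\to 0$ does not work as stated for fixed $n$: the error term $\sum_i t_i^2\log\ep$ from Lemma~\ref{lemma:lower-bound-on-Delta-n-height-pairing} blows up as $\ep\to 0$ when $\Delta_n$ is fixed, so the atomic self-energies must instead be cancelled globally by the product formula, again extra work the paper's route avoids entirely.

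The well-distribution step is where the more serious gap lies. Your claimed one-step ``strict contraction by a factor $\le 1/\delta_S$ at every atom over which some $\varphi\in S$ is not totally ramified'' is not a uniform statement and fails as such: there are non-exceptional points at which \emph{every} map of $S$, and even every $\gamma_2\in S^2$, is totally ramified (the paper's example $S=\{1/z^2,\,z^2+1\}$ at $\infty$; see Remark~\ref{rem:exceptional} and the remark following Lemma~\ref{lem:level3ramification}), so the atom masses there do not contract in one or two steps; and where some $\varphi$ is not totally ramified, a map of tiny $\nu_1$-mass gives no factor bounded away from $1$ uniformly over atoms. The paper's proof of Proposition~\ref{prop:backwards-orbits-are-well-distr} supplies exactly the missing quantitative mechanism: total ramification of \emph{every} $\gamma_3\in S^3$ at $z$ forces $z\in E_S$ (Lemma~\ref{lem:level3ramification}), and then fixing a single $\gamma_3$ and using Riemann--Hurwitz finiteness of its total-ramification locus yields a uniform bound $\sigma(z)\le M<1$ for all $z\notin E_S$ (Lemma~\ref{lem:bounded_away_from_1}), whence geometric decay of $\sup_z\Delta_{3k,\al}(z)$ and hence of $a_n\le\sup_z\Delta_{n,\al}(z)$. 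Your concluding ``Markov/pigeonhole argument'' that otherwise the mass would concentrate on the finite backward-invariant set $T^{**}$ is precisely the assertion that needs proof (and your inclusion $T^{**}\subseteq E_S$ itself requires a multi-level ramification argument of the same flavor as Lemma~\ref{lem:level3ramification}); as written, this is a genuine gap rather than a sketchable detail, since without a uniform $M<1$ the mass can fail to contract along two-step totally ramified configurations even off $E_S$.
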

The substance of the theorem lies in proving that the $\Delta_{n,\al}$ measures are well-distributed and have $h_S(\Delta_{n,\al})\ra 0$. The equidistribution result then follows from applying Theorem \ref{thm:gen-adelic-equi} above, the equidistribution theorem for generalized adelic measures.

Before beginning our proof of stochastic dynamical equidistribution, we need to prove a few results. We start with a result which generalizes the main conclusion of Theorem 1.2(b) from \cite{HealeyHindes}.
\begin{prop}\label{prop:stochastic-degree-scaling}
 Let $(S,\v_1)$ be an $L^1$ height controlled stochastic family of rational maps, each of degree at least $2$. Then for any discrete probability measure $\Delta$ on $\bP^1(\Qbar)$ of finite height, 
 \[
  \bE_S \frac{h_S(\varphi_*(\Delta))}{\deg \varphi} = h_S(\Delta),
 \]
 where $\varphi_*(\Delta)$ denotes the pushforward measure. 
\end{prop}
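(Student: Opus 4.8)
The plan is to reduce, via the linearity of $h_\rho$ from Proposition~\ref{prop:height-average}, to the case of a point mass, and then to establish the functional equation for point masses by exploiting the description of $h_S=h_\rho$ (Theorem~\ref{thm:stoch-is-gen-adelic}) as the Healey--Hindes limit. Write $h_S=h_\rho$ for the canonical generalized adelic measure $\rho$ of $(S,\v_1)$. If $\Delta=\sum_i t_i\delta_{\al_i}$ has finite height, then $\varphi_*(\Delta)=\sum_i t_i\delta_{\varphi(\al_i)}$ again has finite height: since $h(\varphi(\al))\le(\deg\varphi)h(\al)+O_\varphi(1)$ and $h(\Delta)=\sum_i t_i h(\al_i)<\infty$ (Proposition~\ref{prop:Delta-finite-Weil-height}), the criterion of Definition~\ref{defn:Delta-height-bounded} holds. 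By Proposition~\ref{prop:height-average}, applied to the finitely supported (hence finite-height, by Lemma~\ref{lemma:finite-Delta-have-finite-height}) measures $\delta_{\al_i}$ and $\delta_{\varphi(\al_i)}$, we get $h_S(\Delta)=\sum_i t_i\,h_S(\al_i)$ and $h_S(\varphi_*\Delta)=\sum_i t_i\,h_S(\varphi(\al_i))$. Since $h_S=h+O(1)$ is uniformly bounded below by some constant $-C_0$ (Theorem~\ref{thm:gen-h-is-weil-nonneg}), a routine application of Tonelli's theorem, after shifting each term by $C_0$ to make it nonnegative, lets us interchange $\bE_S$ with $\sum_i$; subtracting the finite correction terms, it suffices to prove
\[
 \bE_S \frac{h_S(\varphi(\al))}{\deg\varphi} = h_S(\al) \qquad\text{for every }\al\in\bP^1(\Qbar).
\]

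For point masses I would use that $h_S(\beta)=\lim_{n\to\infty}a_n(\beta)$, where $a_n(\beta):=\bE_{S^n}\tfrac{1}{\deg\gamma_n}h(\gamma_n(\beta))$ (see the introduction and Theorem~\ref{thm:stoch-is-gen-adelic}). The combinatorial heart of the argument is that prepending a map $\varphi$ to an $n$-tuple $\gamma_n=(\varphi_1,\dots,\varphi_n)$ produces the $(n{+}1)$-tuple $(\varphi,\varphi_1,\dots,\varphi_n)$, whose associated composition at $\al$ is $\gamma_n(\varphi(\al))$ and whose degree is $(\deg\varphi)(\deg\gamma_n)$; as $\v_{n+1}=\v_1\times\v_n$, this identifies $\bE_S\bE_{S^n}$ with $\bE_{S^{n+1}}$, so that for each fixed $n$,
\[
 \bE_S\Big(\tfrac{1}{\deg\varphi}\,a_n(\varphi(\al))\Big)=\bE_S\,\bE_{S^n}\frac{h(\gamma_n(\varphi(\al)))}{(\deg\varphi)(\deg\gamma_n)}=\bE_{S^{n+1}}\frac{h(\gamma_{n+1}(\al))}{\deg\gamma_{n+1}}=a_{n+1}(\al).
\]
If one may pass $\bE_S$ through $\lim_n$ on the left, the left side tends to $\bE_S\tfrac{h_S(\varphi(\al))}{\deg\varphi}$ and the right side to $h_S(\al)$, proving the point-mass case. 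Thus the one genuine obstacle is this interchange of $\bE_S$ and $\lim_n$.

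I plan to overcome it by showing the convergence $a_n(\beta)\to h_S(\beta)$ is \emph{uniform in} $\beta\in\bP^1(\Qbar)$, with a geometric rate depending only on $(S,\v_1)$. Writing $\gamma_{n+1}=(\varphi_1,\dots,\varphi_n,\varphi_{n+1})$ so that $\gamma_{n+1}(\beta)=\varphi_{n+1}(\gamma_n(\beta))$, apply \eqref{eqn:basic-height-pullback-formula} to the single-pullback adelic measure $\varphi_{n+1}^*(\lambda)/\deg\varphi_{n+1}$ (whose potential is exactly $g_{\varphi_{n+1},y}$ by \eqref{eqn:first-g-phi-y}) and the explicit comparison \eqref{eqn:explicit-height-bound} to obtain, uniformly in $u$, the bound $\bigl|\tfrac{h(\varphi_{n+1}(u))}{\deg\varphi_{n+1}}-h(u)\bigr|\le 3\int_Y C_{\varphi_{n+1}}(y)\,d\mu(y)$, with $C_{\varphi_{n+1}}$ as in \eqref{eqn:C-phi-y}. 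Dividing by $\deg\gamma_n$, averaging over $S^{n+1}$, and using that $\varphi_{n+1}$ is independent of $(\varphi_1,\dots,\varphi_n)$ together with $\bE_S(1/\deg\varphi)=\delta_S^{-1}$ from \eqref{eqn:delta-degree} yields $|a_{n+1}(\beta)-a_n(\beta)|\le 3\,\delta_S^{-n}\int_Y C_S(y)\,d\mu(y)$, which is summable since $\delta_S\ge 2$ and $(S,\v_1)$ is $L^1$ height controlled \eqref{eqn:L1-height-control}; hence $|a_n(\beta)-h_S(\beta)|\le\ep_n$ for some $\ep_n\to 0$ independent of $\beta$. Consequently $\bigl|\bE_S\bigl(\tfrac{1}{\deg\varphi}a_n(\varphi(\al))\bigr)-\bE_S\tfrac{h_S(\varphi(\al))}{\deg\varphi}\bigr|\le\ep_n\,\bE_S(1/\deg\varphi)=\ep_n/\delta_S\to 0$, the quantity $\bE_S\tfrac{h_S(\varphi(\al))}{\deg\varphi}$ being finite because $h_S=h+O(1)$ and $\bE_S\tfrac{h(\varphi(\al))}{\deg\varphi}\le h(\al)+3\int_Y C_S\,d\mu<\infty$. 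Combining this with the identity $\bE_S\bigl(\tfrac{1}{\deg\varphi}a_n(\varphi(\al))\bigr)=a_{n+1}(\al)\to h_S(\al)$ closes the point-mass case, and together with the first paragraph, the proposition.
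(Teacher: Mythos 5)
Your proof is correct and follows essentially the same route as the paper: reduce to point masses via Proposition \ref{prop:height-average}, then split off the first map so that $\nu_{n+1}=\nu_1\times\nu_n$ identifies $\bE_S\,\bE_{S^n}$ with $\bE_{S^{n+1}}$, and finally interchange $\bE_S$ with the limit in $n$. Your uniform geometric estimate $|a_{n+1}(\beta)-a_n(\beta)|\le 3\,\delta_S^{-n}\int_Y C_S\,d\mu$ is just a more explicit justification of that interchange, which the paper dispatches by appealing to the absolute convergence of the series involved.
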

 \noindent We remind the reader that the pushforward measure of a discrete measure 
 \[
  \Delta = \sum_i t_i \delta_{\al_i}
 \]
 where  $t_i\geq 0$, $\sum t_i = 1$, and $\al_i\in \bP^1(\Qbar)$, is given by
 \[
 \varphi_*(\Delta) = \sum_i t_i \delta_{\varphi(\al_i)}.
 \]
\begin{proof}
 By Proposition \ref{prop:height-average}, it suffices to prove the result for $\Delta = \delta_\al$ for $\al\in\Qbar$. (We note in passing that $h(\delta_\al)$ is indeed equal to the usual height of $\al$, see Proposition \ref{prop:galois-invariance-of-height} above.) Recall that
 \[
  h_S(\al) = \lim_{n\ra\infty} \int_{S^n} \frac{h(\gamma_n(\al))}{\deg \gamma_n}\,d\nu_n(\gamma_n).
 \]
 For $n\geq 2$, each $\gamma_n\in S^n$ can be written as $\gamma_n = \gamma_{n-1} \circ \varphi$ where $\gamma_{n-1}\in S^{n-1}$ and $\varphi \in S$. By the absolute convergence of the series involved,
 \begin{align*}
  h_S(\al) &= \lim_{n\ra\infty} \int_{S^{n-1}} \int_S \frac{h(\gamma_{n-1}\circ \varphi (\al))}{\deg (\gamma_{n-1}) \deg(\varphi)}\,d\v_1(\varphi) \,d\nu_{n-1}(\gamma_{n-1})\\
  &= \int_S \frac{1}{\deg(\varphi)} \lim_{n\ra\infty} \int_{S^{n-1}} \frac{h(\gamma_{n-1}\circ \varphi (\al))}{\deg (\gamma_{n-1})}\,d\nu_{n-1}(\gamma_{n-1})\,d\v_1(\varphi)\\
  &= \int_S \frac{1}{\deg(\varphi)} h_S(\varphi(\al)) \,d\v_1(\varphi) \\
  &= \bE_S \frac{h_S(\varphi_*(\al))}{\deg \varphi}.\qedhere
 \end{align*}
\end{proof}
Before we move on, we state for its independent interest a restatement of Proposition \ref{prop:stochastic-degree-scaling} in the case where $\Delta = \delta_\al$:
\begin{cor}
If $(S,\v_1)$ is an $L^1$ height controlled family of dynamical rational maps, each of degree at least $2$, and $\al\in \bP^1(\Qbar)$, then the stochastic height $h_S(\al)$ satisfies
 \begin{equation}
  h_S(\al) = \bE_S \frac{h_S(\varphi(\al))}{\deg \varphi}.
 \end{equation}
\end{cor}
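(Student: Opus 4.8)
The plan is simply to specialize Proposition~\ref{prop:stochastic-degree-scaling} to the case of a point mass. First I would recall that, by the convention $h_\rho(\al) = h_\rho(\delta_\al)$ for the generalized adelic measure $\rho$ furnished by Theorem~\ref{thm:stoch-is-gen-adelic} with $h_\rho = h_S$, we have $h_S(\al) = h_S(\delta_\al)$ for every $\al\in\bP^1(\Qbar)$, and likewise $h_S(\varphi(\al)) = h_S(\delta_{\varphi(\al)})$. Next I would observe that the pushforward of a Dirac mass is again a Dirac mass, namely $\varphi_*(\delta_\al) = \delta_{\varphi(\al)}$ (this is the displayed formula for $\varphi_*(\Delta)$ preceding the corollary, taken with $\Delta = \delta_\al$). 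Since a finitely supported probability measure has finite height by Lemma~\ref{lemma:finite-Delta-have-finite-height}, the hypotheses of Proposition~\ref{prop:stochastic-degree-scaling} are satisfied by $\Delta = \delta_\al$.

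Substituting $\Delta = \delta_\al$ into the conclusion of Proposition~\ref{prop:stochastic-degree-scaling} then gives
\[
 h_S(\al) = h_S(\delta_\al) = \bE_S \frac{h_S(\varphi_*(\delta_\al))}{\deg\varphi} = \bE_S \frac{h_S(\delta_{\varphi(\al)})}{\deg\varphi} = \bE_S \frac{h_S(\varphi(\al))}{\deg\varphi},
\]
which is the asserted identity. There is no genuine obstacle here: the corollary is a direct reformulation, and all the substantive work has already been carried out in establishing Proposition~\ref{prop:stochastic-degree-scaling} (and, further upstream, the averaging property of Proposition~\ref{prop:height-average} together with the finiteness of heights of finitely supported measures). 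The only point meriting a word of care is the bookkeeping identifying $h_S$ evaluated at a point with $h_S$ evaluated at the corresponding point mass, and the elementary fact that pushing a point mass forward under $\varphi$ produces the point mass at the image.
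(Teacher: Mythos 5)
Your proposal is correct and matches the paper's treatment: the corollary is stated there precisely as the specialization of Proposition~\ref{prop:stochastic-degree-scaling} to $\Delta = \delta_\al$, using $\varphi_*(\delta_\al) = \delta_{\varphi(\al)}$ and the identification $h_S(\al) = h_S(\delta_\al)$. Your additional remark that $\delta_\al$ has finite height by Lemma~\ref{lemma:finite-Delta-have-finite-height} is the right bookkeeping and is exactly what makes the hypotheses of the proposition apply.
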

 \noindent We note that, in the case where the family $S$ meets the conditions of the \cite{HealeyHindes} paper, this corollary is essentially equivalent to the $k=1$ case of their Theorem 1.2(b). We state it primarily because it is conceptually important and aesthetically pleasing as the stochastic analogue of the usual scaling property satisfied by the dynamical height, namely, that 
 \[
  h_\varphi(\al) = \frac{h_\varphi(\varphi(\al))}{\deg\varphi}.
 \]
 
 \begin{lemma}\label{lemma:backwards-orbit-has-finite-height}
 Let $(S,\v_1)$ be an $L^1$ height controlled stochastic family of rational maps, each of degree at least $2$. Let $\Delta$ be a discrete measure on $\bP^1(\Qbar)$ with finite height. Then
 \[
  \Gamma = \bE_S \left(\frac{\varphi^*(\Delta)}{\deg(\varphi)}\right)
 \]
 is a discrete measure on $\bP^1(\Qbar)$ with finite height as well, and there exists a constant $C\geq 0$ (independent of $\Delta$) such that 
 \[
  h(\Gamma) \leq \frac12 h(\Delta) + C.
 \]
 \end{lemma}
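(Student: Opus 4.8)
The plan is to reduce to the case of a single point mass and then invoke the explicit functoriality estimate that the generalized-adelic framework supplies. Write $\Delta = \sum_i t_i \delta_{\al_i}$ with the $\al_i\in\bP^1(\Qbar)$ distinct, $t_i\ge 0$, $\sum_i t_i = 1$. Since pullback of measures is linear and the sums below are all sums of positive measures,
\[
 \Gamma = \bE_S\!\left(\frac{\varphi^*(\Delta)}{\deg\varphi}\right) = \sum_i t_i\,\Gamma_i, \qquad
 \Gamma_i := \bE_S\!\left(\frac{\varphi^*(\delta_{\al_i})}{\deg\varphi}\right) = \sum_{\varphi\in S}\nu_1(\varphi)\,\frac{1}{\deg\varphi}\sum_{w\in\varphi^{-1}(\al_i)}\delta_w,
\]
the inner sum with multiplicity; since $\varphi^*(\delta_{\al_i})$ has total mass $\deg\varphi$, each $\Gamma_i$, hence $\Gamma$, is a discrete probability measure on $\bP^1(\Qbar)$. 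Applying Proposition~\ref{prop:height-average} to the standard Weil height $h = h_\lambda$ (and using the remark following it, so that the identities hold in $[0,+\infty]$), we get $h(\Gamma) = \sum_i t_i h(\Gamma_i)$, $h(\Delta) = \sum_i t_i h(\al_i)$, $h(\Gamma_i) = \sum_{\varphi}\nu_1(\varphi)\,h(\varphi^*(\delta_{\al_i})/\deg\varphi)$, and $h(\varphi^*(\delta_\al)/\deg\varphi) = \tfrac{1}{\deg\varphi}\sum_{w\in\varphi^{-1}(\al)}h(w)$. So it suffices to bound $\sum_{w\in\varphi^{-1}(\al)}h(w)$ for a fixed $\varphi$.

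For this, fix $\varphi\in S$ and let $\rho^{(\varphi)} = \varphi^*(\lambda)/\deg\varphi$, which is an adelic (hence generalized adelic) measure whose potential at $y$ is the continuous function $g_{\varphi,y}$ of \eqref{eqn:first-g-phi-y} and whose bounding function is $C_\varphi(y)$ of \eqref{eqn:C-phi-y}; note $\int_Y C_\varphi\,d\mu < \infty$ since $C_\varphi$ is a compactly supported simple function. Combining the pullback identity \eqref{eqn:basic-height-pullback-formula} with the explicit estimate \eqref{eqn:explicit-height-bound} applied to the point mass $\delta_\beta$ gives
\[
 \left|\tfrac{1}{\deg\varphi}\,h(\varphi(\beta)) - h(\beta)\right| \le 3\int_Y C_\varphi(y)\,d\mu(y)\qquad\text{for all }\beta\in\bP^1(\Qbar).
\]
Taking $\beta = w$ for each $w\in\varphi^{-1}(\al)$ and summing over the $\deg\varphi$ preimages (with multiplicity) yields $\sum_{w\in\varphi^{-1}(\al)}h(w) \le h(\al) + 3\deg(\varphi)\int_Y C_\varphi\,d\mu$, and therefore $h(\varphi^*(\delta_\al)/\deg\varphi) \le \tfrac{1}{\deg\varphi}h(\al) + 3\int_Y C_\varphi\,d\mu$.

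Finally, average over $\varphi$: using $\deg\varphi \ge 2$ and $\bE_S\int_Y C_\varphi\,d\mu = \int_Y C_S\,d\mu$,
\[
 h(\Gamma_i) = \sum_{\varphi\in S}\nu_1(\varphi)\,h\!\left(\frac{\varphi^*(\delta_{\al_i})}{\deg\varphi}\right) \le \frac12 h(\al_i) + 3\int_Y C_S(y)\,d\mu(y),
\]
which is finite by the $L^1$ height control hypothesis \eqref{eqn:L1-height-control}. Summing against the $t_i$ gives $h(\Gamma) = \sum_i t_i h(\Gamma_i) \le \tfrac12 h(\Delta) + 3\int_Y C_S(y)\,d\mu(y) < \infty$, and by Proposition~\ref{prop:Delta-finite-Weil-height} this means $\Gamma$ has finite height, with $C = 3\int_Y C_S(y)\,d\mu(y)$ depending only on $(S,\nu_1)$. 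The one point requiring care is ensuring that the error term is controlled by $\int_Y C_S\,d\mu$ rather than by some $\varphi$-dependent quantity that could blow up as $\deg\varphi\to\infty$; routing the functoriality constant through the normalized bound \eqref{eqn:explicit-height-bound} with bounding function $C_\varphi$ is precisely what makes the factor $\deg\varphi$ cancel, leaving a quantity whose $S$-expectation is exactly $3\int_Y C_S\,d\mu$, finite by hypothesis. (The various interchanges of summation are all instances of Proposition~\ref{prop:height-average} together with nonnegativity, so no further justification is needed.)
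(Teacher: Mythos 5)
Your proof is correct, and it runs on the same engine as the paper's: the explicit comparison \eqref{eqn:explicit-height-bound} between $h$ and a $\varphi$-adapted adelic height, exact functoriality of that adapted height under $\varphi$, Proposition~\ref{prop:height-average} to linearize over the decomposition, and then averaging over $S$ using $\deg\varphi\ge 2$ together with the $L^1$ height control hypothesis. The difference lies in which adapted height you compare to and at what level you work: the paper argues directly with the measure $\Delta$ and the canonical height $h_\varphi$, using the invariance $h_\varphi(\varphi^*(\Delta)) = h_\varphi(\Delta)$ and paying the $3\int_Y C_\varphi\,d\mu$ comparison error twice (whence the $6\int_Y C_\varphi\,d\mu$ in its chain of inequalities, absorbed by $1/\deg\varphi \le 1/2$), whereas you reduce to point masses and compare to the one-step pullback height $h_{\varphi^*(\lambda)/\deg\varphi} = \frac{1}{\deg\varphi}\,h\circ\varphi$ via \eqref{eqn:basic-height-pullback-formula}, paying the error once and summing over the $\deg\varphi$ preimages with multiplicity. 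Your variant has the small advantage that the bounding function of $\varphi^*(\lambda)/\deg\varphi$ is literally the $C_\varphi$ of \eqref{eqn:C-phi-y} that enters the definition of $C_S$, so no separate bound on the potential of the canonical measure of $\varphi$ is needed; both routes terminate in the same admissible constant $C = 3\int_Y C_S(y)\,d\mu(y)$, and your care in ordering the applications of Proposition~\ref{prop:height-average} (or invoking its extended-value remark) disposes of the only point where an interchange needs justification.
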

 \begin{proof}
 We begin by recalling the notation of Theorem \ref{thm:gen-h-is-weil-nonneg} above. For each $\varphi\in S$ we have an generalized adelic measure $\rho_\varphi$ defined over the field of definition of $\varphi$. In particular, for each $y\in Y$ there exists a continuous function $g_y : \sP^1_y \ra \bR$ such that $\Delta g_{\varphi,y} = \rho_{\varphi,y} - \lambda_y$, and further, the function
 \[
  C_\varphi(y) = \sup_{z\in \sP^1_y} \abs{g_{\varphi,y}(z)}
 \]
 is compactly supported and  constant on the sets $Y(K,v)$, where $K$ is a number field over which $\varphi$ is defined and $v$ ranges over the places of $K$. In particular, $C_\varphi\in L^1(Y)$. In fact, we proved in equation  \eqref{eqn:explicit-height-bound} that for all discrete measures $\Delta$ on $\Qbar$, we have
 \begin{equation*} %\label{eqn:explicit-height-bound}
  \abs{ h_{\varphi}(\Delta) - h(\Delta)} \leq 3 \int_Y C_\varphi(y)\,d\mu(y).
 \end{equation*}
 Further, the condition that $S$ is $L^1$ height bounded states that the function
 \[
 C(y) = \bE_S C_\varphi(y) %\in L^1(Y).
 \]
is also in $L^1(Y)$.
Now, 
\[
 h_{\varphi}(\varphi^*(\Delta)) = h_{\varphi}(\Delta).
\]
 (Recall that while $h_\varphi(\beta) = (1/\deg(\varphi)) h_\varphi(\al)$ for any $\beta\in \varphi^{-1}(\al)$, the pullback measure has total mass $\deg(\varphi)$ as it weights each point in the pullback equally with multiplicity.) It follows from Proposition~\ref{prop:height-average} that

 \begin{align*}
  h(\Gamma) =  \bE_S \left(\frac{h(\varphi^*(\Delta))}{\deg(\varphi)}\right)
  &\leq \bE_S\frac{1}{\deg(\varphi)} \left(h_\varphi(\varphi^*(\Delta)) + 3\int_Y C_\varphi \,d\mu\right)\\
  &= \bE_S\frac{1}{\deg(\varphi)} \left(h_\varphi(\Delta) + 3\int_Y C_\varphi \,d\mu\right)\\
  &\leq \bE_S\frac{1}{\deg(\varphi)} \left(h(\Delta) + 6\int_Y C_\varphi \,d\mu\right)\\
  &\leq \frac12 h(\Delta) + 3 \int_Y C(y)\,d\mu(y),
 \end{align*}
 where we have used the fact that $\deg(\varphi)\geq 2$ for every $\varphi\in S$, which completes the proof of the claim with constant $C = 3 \int_Y C(y)\,d\mu(y)$. 
 \end{proof}
 
\begin{prop}\label{prop:backwards-orbit-has-finite-height}
  Let $(S,\v_1)$ be an $L^1$ height controlled stochastic family of rational maps, each of degree at least $2$. For any $\al\in\bP^1(\Qbar)$, the backwards orbit measures $\Delta_{n,\al}$ have finite height in the sense of Definition \ref{defn:Delta-height-bounded} above.
\end{prop}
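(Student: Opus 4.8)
The plan is a straightforward induction on $n$, with essentially all of the substantive work already isolated in Lemma~\ref{lemma:backwards-orbit-has-finite-height}. For the base case, $\Delta_{0,\al} = \delta_\al$ is a point mass and hence finitely supported, so it has finite height by Lemma~\ref{lemma:finite-Delta-have-finite-height}; the degenerate case $\al = \infty$ is harmless, since support at $\infty$ is explicitly excluded from the integral in Definition~\ref{defn:Delta-height-bounded}, so $\delta_\infty$ has height integral $0$.

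For the inductive step, I would assume $\Delta_{n,\al}$ is a discrete probability measure on $\bP^1(\Qbar)$ of finite height. By construction,
\[
 \Delta_{n+1,\al} = \bE_S\left(\frac{\varphi^*(\Delta_{n,\al})}{\deg(\varphi)}\right).
\]
First I would observe that $\Delta_{n+1,\al}$ is again a discrete probability measure on $\bP^1(\Qbar)$: each $\varphi^*(\Delta_{n,\al})/\deg(\varphi)$ is a discrete probability measure, $S$ is countable, and $\nu_1$ is a probability measure on $S$, so the resulting countable convex combination is discrete and has total mass $1$. Lemma~\ref{lemma:backwards-orbit-has-finite-height} then applies verbatim, showing $\Delta_{n+1,\al}$ has finite height and moreover
\[
 h(\Delta_{n+1,\al}) \leq \frac12\, h(\Delta_{n,\al}) + C, \qquad C = 3\int_Y C_S(y)\,d\mu(y),
\]
with $C < \infty$ by the $L^1$ height control hypothesis and independent of $n$ and $\al$. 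This completes the induction, and by Corollary~\ref{cor:finite-height-is-same-for-all-heights} it follows in particular that $h_S(\Delta_{n,\al})$ exists and is finite for every $n$.

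As an immediate byproduct, iterating the displayed estimate from the base value $h(\Delta_{0,\al}) = h(\al)$ gives the uniform bound $h(\Delta_{n,\al}) \leq h(\al) + 2C$ for all $n$, which I would record explicitly, as it is precisely the input needed afterward to show $h_S(\Delta_{n,\al}) \to 0$. No step presents a genuine obstacle: the only care required is the bookkeeping that each $\Delta_{n,\al}$ really is a discrete probability measure on $\bP^1(\Qbar)$ — so that the cited lemmas are applicable — and the observation that the constant appearing in the one-step recursion does not grow with $n$ or depend on $\al$, which is exactly what the proof of Lemma~\ref{lemma:backwards-orbit-has-finite-height} delivers.
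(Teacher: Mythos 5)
Your proof is correct and follows essentially the same route as the paper: induction on $n$, with the base case $\Delta_{0,\al}=\delta_\al$ handled as a finitely supported measure and the inductive step supplied verbatim by Lemma~\ref{lemma:backwards-orbit-has-finite-height}, whose constant depends only on $(S,\v_1)$. The extra bookkeeping you add (that each $\Delta_{n,\al}$ is a discrete probability measure, and the iterated uniform bound $h(\Delta_{n,\al})\leq h(\al)+2C$) is consistent with, and slightly more explicit than, the paper's argument.
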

\begin{proof}
 It suffices to prove finite height with respect to the classical height, as by Corollary \ref{cor:finite-height-is-same-for-all-heights}, a discrete $\bP^1(\Qbar)$ measure has finite height with respect to one generalized adelic measure $\rho$ if and only if it has finite height for all such heights. Note that $\Delta_{0,\al} = \delta_\al$ has finite height $h(\Delta_{0,\al}) = h(\al)$. Suppose that $\Delta_{n,\al}$ has finite height. Then by applying Lemma \ref{lemma:backwards-orbit-has-finite-height}, we see that
 \[
 h(\Delta_{n+1,\al}) \leq \frac12 h(\Delta_{n}(\al)) + C
 \]
 for $C$ a constant which depends only on the family $(S,\v_1)$, but not on $\al$. It follows that each $\Delta_{n+1,\al}$ has finite height, as claimed.
\end{proof}
Indeed, it follows from the proof above that the limit supremum of the standard heights of a backwards orbit is bounded, but we will prove in the course of our equidistribution theorem that in fact $h_S(\Delta_{n,\al})\ra 0$ as $n\ra\infty$.

\begin{prop}\label{prop:backwards-orbits-are-well-distr}
 Let $(S,\v_1)$ be an $L^1$ height controlled stochastic family of rational maps, each of degree at least $2$. For any $\al\in \bP^1(\Qbar)\setminus E_S$ not in the exceptional set of $S$, the backwards orbit measures $\Delta_{n,\al}$ are well-distributed.
\end{prop}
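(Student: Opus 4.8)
The plan is to reduce the statement to showing that $M_n:=\sup_{w\in\bP^1(\Qbar)}\Delta_{n,\alpha}(\{w\})$ tends to $0$, since $\sum_{z\in\supp(\Delta_{n,\alpha})}\Delta_{n,\alpha}(\{z\})^2\le M_n\sum_z\Delta_{n,\alpha}(\{z\})=M_n$. I assume throughout that $\nu_1(\varphi)>0$ for every $\varphi\in S$ (as in the remark following the definition of the random backwards orbit measure), so that $\supp(\Delta_{n,\alpha})=\bigcup_{\gamma_n\in S^n}\gamma_n^{-1}(\alpha)$. Unwinding the recursion defining $\Delta_{n+1,\alpha}$ gives $\Delta_{n+1,\alpha}(\{w\})=\sum_{\varphi\in S}\nu_1(\varphi)\tfrac{e_\varphi(w)}{\deg\varphi}\Delta_{n,\alpha}(\{\varphi(w)\})$, where $e_\varphi(w)$ is the ramification index of $\varphi$ at $w$; since $e_\varphi(w)\le\deg\varphi$ this already shows $M_{n+1}\le M_n$, so $M_n$ decreases to some limit $M\ge 0$ and it suffices to derive a contradiction from $M>0$.

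Assume $M>0$, fix one $\varphi_0\in S$ with $p_0:=\nu_1(\varphi_0)>0$, and set $\eta_n:=M_n-M\to 0$. The key is a ``deficit'' estimate: if $\Delta_{m,\alpha}(\{w\})\ge M$, then comparing with $\Delta_{m,\alpha}(\{w\})\le\sum_\varphi\nu_1(\varphi)\Delta_{m-1,\alpha}(\{\varphi(w)\})\le M_{m-1}$ and expanding the difference term by term shows that both of the nonnegative quantities $\sum_\varphi\nu_1(\varphi)\bigl(M_{m-1}-\Delta_{m-1,\alpha}(\{\varphi(w)\})\bigr)$ and $\sum_\varphi\nu_1(\varphi)\bigl(1-\tfrac{e_\varphi(w)}{\deg\varphi}\bigr)\Delta_{m-1,\alpha}(\{\varphi(w)\})$ are at most $\eta_{m-1}$. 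Isolating the $\varphi_0$ term in the first sum yields $\Delta_{m-1,\alpha}(\{\varphi_0(w)\})\ge M-\eta_{m-1}/p_0$, and then the $\varphi_0$ term in the second sum forces $1-\tfrac{e_{\varphi_0}(w)}{\deg\varphi_0}\le 2\eta_{m-1}/(p_0M)$ once $m$ is large; as $e_{\varphi_0}(w)$ is an integer, this means $e_{\varphi_0}(w)=\deg\varphi_0$, i.e.\ $w$ is a totally ramified point of $\varphi_0$. By Riemann--Hurwitz a map of degree $\ge 2$ has at most two totally ramified points, so for all large $n$ the (nonempty, since it contains a point attaining $M_n$) set $\{w:\Delta_{n,\alpha}(\{w\})\ge M\}$ lies in a fixed set of at most two points; a pigeonhole on the maximizers then produces a point $z^\ast$ with $\limsup_n\Delta_{n,\alpha}(\{z^\ast\})=M$.

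Let $P:=\{z:\limsup_n\Delta_{n,\alpha}(\{z\})=M\}$, which is nonempty by the previous step. Applying the deficit estimate along a subsequence $(n_j)$ realizing the $\limsup$ at a point $z\in P$ shows at once that (i) for every $\psi\in S$ the $\psi$ term in the first sum forces $\Delta_{n_j-1,\alpha}(\{\psi(z)\})\to M$, so $\psi(z)\in P$, whence $P$ is forward invariant under $S$; and (ii) the $\psi$ term in the second sum, together with $\Delta_{n_j-1,\alpha}(\{\psi(z)\})\to M>0$ and $\nu_1(\psi)>0$, forces the constant $1-\tfrac{e_\psi(z)}{\deg\psi}$ to vanish, so $z$ is a totally ramified point of $\psi$ for every $\psi\in S$. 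In particular $P$ is contained in the (at most two) totally ramified points of $\varphi_0$, so $|P|\in\{1,2\}$, and every point of $P$ is totally ramified for every map in $S$. Finally, $\limsup_n\Delta_{n,\alpha}(\{z^\ast\})=M>0$ forces $z^\ast\in\supp(\Delta_{n,\alpha})=\bigcup_{\gamma_n\in S^n}\gamma_n^{-1}(\alpha)$ for infinitely many $n$, so $\alpha=\gamma_n(z^\ast)$ for some word $\gamma_n$; by forward invariance $\alpha\in P$.

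It remains to contradict $\alpha\notin E_S$. Write $P\subseteq\{a,b\}$ with $a,b$ totally ramified for every $\varphi\in S$ and $\varphi(P)\subseteq P$ for all $\varphi$. If $|P|=1$, then $\alpha$ is a common fixed point of every $\varphi\in S$, and total ramification gives $\varphi^{-1}(\alpha)=\varphi^{-1}(\varphi(\alpha))=\{\alpha\}$ for all $\varphi$, so $\cO^-_S(\alpha)=\{\alpha\}$. If $P=\{a,b\}$, then no $\varphi\in S$ can send both $a$ and $b$ to the same point (that fiber would then have two points, contradicting total ramification), so each $\varphi$ permutes $\{a,b\}$; writing $a=\varphi(c)$ with $c\in\{a,b\}$, total ramification of $c$ gives $\varphi^{-1}(a)=\{c\}$, and similarly for $b$, so $\varphi^{-1}(\{a,b\})=\{a,b\}$ for all $\varphi\in S$, whence $\cO^-_S(\alpha)\subseteq\{a,b\}$. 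In either case the backward orbit of $\alpha$ is finite, so $\alpha\in E_S$, a contradiction. Hence $M=0$, and $\Delta_{n,\alpha}$ is well-distributed. I expect the deficit estimate — in particular extracting ``totally ramified for every $\psi\in S$'' from the vanishing of a single deficit while controlling the gap $M_n-M$ through the fixed positive-mass map $\varphi_0$ — to be the main obstacle; the concluding dichotomy is elementary but relies on the bound of two totally ramified points per map, and one must also be careful, as in the classical equidistribution-of-preimages arguments, that the regularity inputs ($M_n$ monotone, finite height of $\Delta_{n,\alpha}$ from Proposition~\ref{prop:backwards-orbit-has-finite-height}) are in place before taking limits.
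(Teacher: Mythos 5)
Your proof is correct, but it takes a genuinely different route from the paper's. The paper stays quantitative: it first shows (Lemma~\ref{lem:level3ramification}) that a point totally ramified for every length-three composition is exceptional, deduces (Lemma~\ref{lem:bounded_away_from_1}) a uniform bound $\sigma(z)\le M<1$ on the expected ramification ratio over $S^3$ for all $z\notin E_S$, and then, using that $\Delta_{n,\al}$ puts no mass on $E_S$, gets geometric decay of $\sup_z\Delta_{3k,\al}(z)$. You instead argue qualitatively by contradiction: assuming $\sup_z\Delta_{n,\al}(z)\downarrow M>0$, your deficit estimate forces near-maximal atoms to be totally ramified for one fixed positive-mass map, Riemann--Hurwitz confines them to at most two points, a pigeonhole produces the nonempty limit set $P$ of atoms whose mass has limsup $M$, and the deficit estimate again shows $P$ is forward invariant and consists of points totally ramified for \emph{every} map; since $\al$ is then forced into $P$, your one- or two-point case analysis (which closely parallels the paper's proof of Lemma~\ref{lem:level3ramification}) gives $\al\in E_S$, a contradiction. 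The trade-off: the paper's contraction yields an explicit geometric rate along $n\equiv 0\ (\mathrm{mod}\ 3)$, while your argument yields only convergence to zero but avoids the three-fold-composition bookkeeping by extracting total ramification for every map directly from the vanishing deficits. Two small points. First, your deficit estimate is stated for atoms of mass $\ge M$ but is applied along a subsequence where the mass only tends to $M$; the identical computation goes through with $\eta_{m-1}$ replaced by $\eta_{m-1}+\varepsilon_j$, as your wording indicates you intend. Second, you assume $\nu_1$ is strictly positive; this hypothesis is genuinely needed for the statement as written (with weights $(1,0)$ on $\{1/z^2,\,z^2+1\}$ and $\al=\infty$, the backward orbit measures alternate between two point masses although $\al\notin E_S$), and the paper's own proof uses it implicitly, since the chosen and witnessing compositions in Lemma~\ref{lem:bounded_away_from_1} must carry positive $\nu_3$-mass for the bound $\sigma(z)<1$ to follow; making it explicit, as you do, is the right call.
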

\noindent We refer the reader to Definition~\ref{defn:exceptional-set} above for the definition of the 
exceptional set of $S$. 

In order to prove Proposition~\ref{prop:backwards-orbits-are-well-distr}, we require a few lemmas. For the remainder of this section, suppose that $S$ is as in Proposition \ref{prop:backwards-orbits-are-well-distr} and for convenience let $\Delta_n=\Delta_{n,\al}$. For a rational function $\psi$, we define $e_z(\psi)$ to be ramification index of $\psi$ at $z$.

\begin{lemma}
Let $S,\al$ and $\Delta_n=\Delta_{n,\al}$ be as in Proposition~\ref{prop:backwards-orbits-are-well-distr} above. Assume $n \ge 0$ is fixed. Then for each $0 \le k \le n$, we have
	\[
		\Delta_n(z) = \sum_{\gamma_k \in S^k} \nu_k(\gamma_k) \frac{e_z(\gamma_k)}{\deg \gamma_k} \Delta_{n-k}(\gamma_k(z)).
	\]
\end{lemma}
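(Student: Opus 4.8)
The plan is to argue by induction on $k$, with $n$ held fixed, the crucial ingredient being a one-step recursion for the \emph{weights} $\Delta_m(z):=\Delta_m(\{z\})$ of the backwards orbit measures, together with the multiplicativity of degrees, ramification indices, and the probabilities $\nu_k$ under composition of tuples.

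First I would establish the one-step formula, which also handles the base case $k=1$ (and the genuinely trivial case $k=0$ once one reads $S^0=\{\gamma_0\}$ with $\gamma_0=\mathrm{id}$, $\nu_0(\gamma_0)=1$, $\deg\gamma_0=1$, $e_z(\gamma_0)=1$). Unwinding the definition $\Delta_{m+1,\al}=\bE_S\big(\varphi^*(\Delta_{m,\al})/\deg\varphi\big)$ together with the stated description of the pullback of a discrete measure, $\varphi^*\big(\sum_w c_w\delta_w\big)=\sum_w c_w\sum_{z\in\varphi^{-1}(w)}e_z(\varphi)\delta_z$ — so the $\delta_z$-coefficient of $\varphi^*(\Delta_m)$ equals $e_z(\varphi)\,\Delta_m(\varphi(z))$ — yields, for every $m\geq 1$ and every $z$,
\[
 \Delta_m(z)=\sum_{\varphi\in S}\nu_1(\varphi)\,\frac{e_z(\varphi)}{\deg\varphi}\,\Delta_{m-1}(\varphi(z)).
\]
All terms on the right are nonnegative, so the (finite or countable) series converge unconditionally and every rearrangement used below is justified by Tonelli's theorem for sums.

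For the inductive step, assuming the claimed identity at level $k$ with $k\leq n-1$ (so that $n-k\geq 1$), I would apply the one-step formula to $\Delta_{n-k}$ evaluated at the point $\gamma_k(z)$, substitute into the inductive hypothesis, and interchange the two resulting sums. This writes $\Delta_n(z)$ as a sum over $(\gamma_k,\varphi)\in S^k\times S$ of
\[
 \nu_k(\gamma_k)\,\nu_1(\varphi)\,\frac{e_z(\gamma_k)\,e_{\gamma_k(z)}(\varphi)}{\deg\gamma_k\,\deg\varphi}\,\Delta_{n-k-1}\big(\varphi(\gamma_k(z))\big).
\]
Then I would note that $(\gamma_k,\varphi)\mapsto\gamma_{k+1}:=\varphi\circ\gamma_k$ is a bijection $S^k\times S\to S^{k+1}$ on the level of tuples, under which $\nu_k(\gamma_k)\nu_1(\varphi)=\nu_{k+1}(\gamma_{k+1})$, $\deg\gamma_k\deg\varphi=\deg\gamma_{k+1}$, $\varphi(\gamma_k(z))=\gamma_{k+1}(z)$, and, by the chain rule for ramification indices $e_z(\psi\circ\phi)=e_z(\phi)\,e_{\phi(z)}(\psi)$, also $e_z(\gamma_k)\,e_{\gamma_k(z)}(\varphi)=e_z(\gamma_{k+1})$. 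Reindexing the sum by $\gamma_{k+1}$ then gives exactly the asserted formula at level $k+1$, closing the induction.

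The argument is essentially pure bookkeeping and carries no analytic difficulty; the one place to take care — and the only plausible place to slip — is the interaction between the composition convention $\gamma_{k+1}=\varphi_{k+1}\circ\cdots\circ\varphi_1$ (so the newly appended map is \emph{post}-composed) and the chain rule for ramification, i.e.\ making sure the ramification of $\gamma_k$ is read off at $z$ while that of the appended $\varphi$ is read off at $\gamma_k(z)$. Once this is pinned down, the multiplicativities of $\deg$, of $e_z(\cdot)$, and of $\nu_k$ all align, and the reindexing step goes through cleanly.
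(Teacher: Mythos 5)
Your proof is correct and is essentially the paper's argument: both rest on the same one-step pullback recursion for the weights, the multiplicativity of $\nu$, $\deg$, and ramification indices under composition, and a reindexing of tuples. The only (cosmetic) difference is that you induct on $k$ with $n$ fixed, appending the new map by post-composition, whereas the paper inducts on $n$ and peels off the first map by pre-composition; both handle the composition convention correctly.
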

\begin{proof}
The proof is by induction on $n$. Note that the case $n = 0$ is trivial, as we identify $S^0$ as consisting of just the identity map and with $\nu_0$ as the trivial probability measure. Moreover, for any $n \ge 0$ the case $k = 0$ is trivial for the same reason, so we will henceforth assume $k \ge 1$. Now let $n \ge 1$. We have
	\begin{align*}
	\Delta_n(z)
		&= \bE_S\left(\frac{\varphi^*(\Delta_{n-1})(z)}{\deg \varphi}\right)\\
		&= \sum_{\varphi \in S} \nu_1(\varphi) \frac{\varphi^*(\Delta_{n-1})(z)}{\deg \varphi}\\
		&= \sum_{\varphi \in S} \frac{\nu_1(\varphi)}{\deg\varphi} \sum_{x \in \P^1(\Qbar)} \Delta_{n-1}(x)\sum_{w \in \varphi^{-1}(x)} e_w(\varphi)\delta_w(z)\\
		&= \sum_{\varphi \in S} \nu_1(\varphi)\frac{e_z(\varphi)}{\deg\varphi} \Delta_{n-1}(\varphi(z)).
	\end{align*}
By induction, for all $w \in \P^1(\Qbar)$ we have for $1 \le k \le n$ that
	\[
		\Delta_{n-1}(w) = \sum_{\gamma_{k-1} \in S^{k-1}}\nu_{k-1}(\gamma_{k-1})\frac{e_w(\gamma_{k-1})}{\deg \gamma_{k-1}} \Delta_{n-k}(\gamma_{k-1}(w));
	\]
 applying this to $w = \varphi(z)$ with $\varphi \in S$ yields
	\begin{align*}
		\Delta_n(z)
			&= \sum_{\varphi \in S} \nu_1(\varphi)\frac{e_z(\varphi)}{\deg\varphi}\sum_{\gamma_{k-1} \in S^{k-1}}\nu_{k-1}(\gamma_{k-1})\frac{e_{\varphi(z)}(\gamma_{k-1})}{\deg \gamma_{k-1}} \Delta_{n-k}(\gamma_{k-1}(\varphi(z)))\\
			&= \sum_{\varphi \in S}\sum_{\gamma_{k-1} \in S^{k-1}} \nu_1(\varphi)\nu_{k-1}(\gamma_{k-1})\frac{e_z(\varphi)}{\deg\varphi}\frac{e_{\varphi(z)}(\gamma_{k-1})}{\deg \gamma_{k-1}} \Delta_{n-k}(\gamma_{k-1}(\varphi(z))),
	\end{align*}
and by writing $\gamma_k = \gamma_{k-1} \circ \varphi \in S^k$ we get
	\[
		\Delta_n(z) = \sum_{\gamma_k \in S^k} \nu_k(\gamma_k)\frac{e_z(\gamma_k)}{\deg \gamma_k} \Delta_{n-k}(\gamma_k(z)).\qedhere
	\]
\end{proof}

\begin{lemma}\label{lem:level3ramification}
Let $S,\al$ and $\Delta_n=\Delta_{n,\al}$ be as in Proposition~\ref{prop:backwards-orbits-are-well-distr} above. Suppose $e_z(\gamma_3) = \deg \gamma_3$ for all $\gamma_3 \in S^3$. Then $z \in E_S$.
\end{lemma}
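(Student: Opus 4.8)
The plan is to repackage the hypothesis as a statement about total ramification along the first few forward images of $z$, and then bootstrap to a finite set containing $z$ which is invariant under both images and preimages.

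First I would unwind the multiplicativity of ramification indices under composition. For $\gamma_3 = (\varphi_1,\varphi_2,\varphi_3) \in S^3$, so that $\gamma_3(z) = \varphi_3(\varphi_2(\varphi_1(z)))$, the chain rule gives
\[
 e_z(\gamma_3) = e_z(\varphi_1)\cdot e_{\varphi_1(z)}(\varphi_2)\cdot e_{\varphi_2(\varphi_1(z))}(\varphi_3),
\]
while $\deg\gamma_3 = \deg\varphi_1\deg\varphi_2\deg\varphi_3$ and each factor on the left is bounded by the corresponding factor on the right. Hence $e_z(\gamma_3) = \deg\gamma_3$ for \emph{all} $\gamma_3 \in S^3$ holds if and only if for all $\varphi_1,\varphi_2,\varphi_3 \in S$ each of the points $z$, $\varphi_1(z)$, $\varphi_2(\varphi_1(z))$ is a totally ramified point of each map in $S$. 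Writing $R(\varphi) = \{w \in \bP^1(\Qbar) : e_w(\varphi) = \deg\varphi\}$ and $V = \bigcap_{\varphi\in S} R(\varphi)$, this is exactly the assertion that $z \in V$, that $W_1 := \{\varphi(z) : \varphi \in S\} \subseteq V$, and that $W_2 := \{\gamma_2(z) : \gamma_2 \in S^2\} \subseteq V$. Since a rational map of degree at least $2$ has at most two totally ramified points (Riemann--Hurwitz), $|R(\varphi)| \le 2$ for each $\varphi$, and so $|V| \le 2$.

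Next I would argue by cases. If $\varphi(z) = z$ for every $\varphi \in S$, then, $z$ being totally ramified for each $\varphi$, we get $\varphi^{-1}(z) = \{z\}$, so $\cO^{\pm}_S(z) = \{z\}$ and $z \in E_S$. Otherwise choose $\varphi_0 \in S$ with $b := \varphi_0(z) \ne z$; then $b \in W_1 \subseteq V$, and since $|V|\le 2$ we conclude $V = \{z,b\}$. Now for any $\varphi \in S$ we have $\varphi(z) \in W_1 \subseteq V$ and $\varphi(b) = \varphi(\varphi_0(z)) \in W_2 \subseteq V$, so $\varphi(V) \subseteq V$, i.e. $V$ is forward-invariant under $S$. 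Moreover $\varphi$ is injective on $V$: if $\varphi(z) = \varphi(b)$ then $b \in \varphi^{-1}(\varphi(z)) = \{z\}$ because $z$ is totally ramified, a contradiction. Hence each $\varphi$ permutes $\{z,b\}$, and combining this with total ramification of $\varphi$ at both $z$ and $b$ yields $\varphi^{-1}(z) \cup \varphi^{-1}(b) \subseteq \{z,b\} = V$, so $V$ is backward-invariant as well. An immediate induction then gives $\gamma_n^{-1}(z) \subseteq V$ for all $n$, so $\cO^-_S(z) \subseteq V$ is finite; by Remark~\ref{rem:exceptional}(1) this already forces $\cO^{\pm}_S(z)$ finite, so $z \in E_S$.

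The crucial point — and the reason the hypothesis is stated at level $3$ rather than level $2$ — is obtaining $\varphi(b) \in V$ for every $\varphi \in S$: the level-$2$ data ($z \in V$ and $W_1 \subseteq V$) controls only one forward step, whereas forward-invariance of $V$ requires that the second forward images $W_2$ also land in $V$, which is precisely what the third factor in the chain-rule decomposition provides. Everything else is elementary once the ramification hypothesis is translated into the containments $z \in V$, $W_1 \subseteq V$, $W_2 \subseteq V$; the only external input is the Riemann--Hurwitz bound $|R(\varphi)| \le 2$, and I would be careful to cite it in that form and to handle the degenerate fixed-point case separately as above.
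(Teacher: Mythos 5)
Your argument is correct and follows essentially the same route as the paper's proof: you translate the level-$3$ hypothesis via multiplicativity of ramification indices into the containments $z,\varphi_1(z),\varphi_2(\varphi_1(z))\in\cR_S$ (your $V$), invoke Riemann--Hurwitz to get $|\cR_S|\le 2$, and split into the fixed-point case and the case $\varphi_0(z)\ne z$, showing $\{z,b\}$ is fully invariant. The only cosmetic difference is that you conclude via finiteness of the backward orbit and Remark~\ref{rem:exceptional}, whereas the paper concludes directly from full invariance of $\cR_S$.
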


\begin{proof}
Define the set
	\[
		\cR_S = \{x \in \P^1(\Qbar) : e_x(\varphi) = \deg \varphi \text{ for all } \varphi \in S\}.
	\]
By the Riemann-Hurwitz theorem, we have $|\cR_S| \le 2$.

Let $\varphi_1,\varphi_2,\varphi_3 \in S$ be arbitrary. Taking $\gamma_3 = \varphi_3 \circ \varphi_2 \circ \varphi_1$, the equality $e_z(\gamma_3) = \deg \gamma_3$ implies that 
	\[
		e_z(\varphi_1) = \deg\varphi_1,\quad e_{\varphi_1(z)}(\varphi_2) = \deg\varphi_2,\quad\text{and}\quad e_{\varphi_2(\varphi_1(z))}(\varphi_3) = \deg\varphi_3.
	\]
Since $\varphi_1,\varphi_2,\varphi_3$ were arbitrary, it follows that
	\begin{enumerate}%[(a)]
	\item $z \in \cR_S$;
	\item $\varphi(z) \in \cR_S$ for all $\varphi \in S$; and
	\item $\varphi_2(\varphi_1(z)) \in \cR_S$ for all $\varphi_1,\varphi_2 \in S$.
	\end{enumerate}

Suppose that $\varphi(z) = z$ for all $\varphi \in S$. Then, since $\varphi$ is totally ramified at $z$, we have $\varphi^{-1}(z) = \{z\}$ for all $\varphi \in S$, and therefore $z \in E_S$.

Now suppose there exists $\varphi_1 \in S$ such that $w := \varphi_1(z) \ne z$. By (b), we have $\varphi(z) \in \cR_S$ for all $\varphi \in S$; in particular, we have $w \in \cR_S$, so $\cR_S = \{z,w\}$. By (c), we also have $\varphi(w) \in \cR_S$ for all $\varphi \in S$. Thus, for all $\varphi \in S$, we have $\varphi(z) \in \{z,w\}$ and $\varphi(w) \in \{z,w\}$. Now fix $\varphi \in S$; since $\varphi$ is totally ramified at both $z$ and $w$, we have $\varphi(z) \ne \varphi(w)$, hence $\varphi(\cR_S) = \cR_S$, and therefore---once again using the fact that $\varphi$ is totally ramified at $z$ and $w$---we have $\varphi^{-1}(\cR_S) = \cR_S$. Therefore, $z$ is exceptional.
\end{proof}

\begin{rmk}
The conclusion of Lemma~\ref{lem:level3ramification} need not hold if we only assume that $e_z(\gamma_2) = \deg \gamma_2$ for all $\gamma_2 \in S^2$. For example, let $\phi_1(z) = \frac{1}{z^2}$ and $\phi_2(z) = z^2 + 1$, and take $S = \{\phi_1,\phi_2\}$. For all four second iterates $\gamma_2 \in S^2$, we have $e_\infty(\gamma_2) = 4 = \deg \gamma_2$. However, $\infty$ is not exceptional; see Remark~\ref{rem:exceptional}.
\end{rmk}

\begin{lemma}\label{lem:bounded_away_from_1}
Let $S,\al$ and $\Delta_n=\Delta_{n,\al}$ be as in Proposition~\ref{prop:backwards-orbits-are-well-distr} above. For $z \in \P^1(\Qbar)$, define
	\[
		\sigma(z) = \sum_{\gamma_3 \in S^3} \nu_3(\gamma_3)\frac{e_z(\gamma_3)}{\deg \gamma_3}.
	\]
There exists $M < 1$ such that $\sigma(z) < M$ for all $z \notin E_S$.
\end{lemma}

\begin{proof}

Fix $\gamma_3 \in S^3$, and define
    \[
        \cA := \{z \in \P^1(\Qbar) : e_z(\gamma_3) = \deg \gamma_3\}.
    \]
By the Riemann-Hurwitz theorem, the set $\cA$ has at most two elements, so in particular it is finite.

If $z \notin \cA$, we have $e_z(\gamma_3) < \deg \gamma_3$, hence
    \[
        \sigma(z) \le M_1 := 1 - \frac{\nu_3(\gamma_3)}{\deg \gamma_3} < 1.
    \]
If $z \in \cA\setminus E_S$, there exists (by Lemma~\ref{lem:level3ramification}) an element $\gamma_3' \in S^3$ such that $e_z(\gamma_3') < \deg \gamma_3'$, and arguing as above yields $\sigma(z) < 1$. Since $\cA$ is finite, the quantity
    \[
        M_2 := \max_{z \in \cA \setminus E_S} \sigma(z)
    \]
exists and is strictly less than $1$.
The lemma follows by taking $M := \max\{M_1,M_2\}$.
\end{proof}

We are now ready to prove Proposition~\ref{prop:backwards-orbits-are-well-distr}.

\begin{proof}[Proof of Proposition~\ref{prop:backwards-orbits-are-well-distr}]
It suffices to show that
	\[
		\lim_{n\to\infty} \sup_{z \in \P^1(\Qbar)} \Delta_n(z) = 0,
	\]
since we have
	\[
		\sum_{z \in \P^1(\Qbar)} \Delta_n(z)^2
			\le \bigg(\sup_{z \in \P^1(\Qbar)} \Delta_n(z)\bigg) \cdot \sum_{z \in \P^1(\Qbar)} \Delta_n(z)\\
			= \sup_{z \in \P^1(\Qbar)} \Delta_n(z).
	\]
First, we observe that for all $z \in \P^1(\Qbar)$ we have
	\begin{align*}
	\Delta_n(z)
		&= \sum_{\varphi \in S} \nu_1(\varphi)\frac{e_z(\varphi)}{\deg\varphi} \Delta_{n-1}(\varphi(z))\\
		&\le \bigg(\sup_{w \in \P^1(\Qbar)} \Delta_{n-1}(w)\bigg)\sum_{\varphi \in S} \nu_1(\varphi)\\
		&= \sup_{w \in \P^1(\Qbar)} \Delta_{n-1}(w),
	\end{align*}
so the sequence $\bigg(\displaystyle\sup_{z \in \P^1(\Qbar)} \Delta_n(z)\bigg)_{n\in\bN}$ is non-increasing.

Finally, we claim that the subsequence $\bigg(\displaystyle\sup_{z \in \P^1(\Qbar)} \Delta_{3k}(z)\bigg)_{k\in\bN}$ decreases at least geometrically, from which our result follows. Observe that since $\alpha \notin E_S$, $\Delta_n$ is not supported on $E_S$ for any $n$; that is, $\Delta_n(w) = 0$ for all $w \in E_S$.

Now fix $k \ge 1$ and $z \in \P^1(\Qbar) \setminus E_S$. Let $M$ be as in Lemma~\ref{lem:bounded_away_from_1}. Then we have
	\begin{align*}
	\Delta_{3k}(z)
		&= \sum_{\gamma_3 \in S^3}\v_3(\gamma_3)\frac{e_z(\gamma_3)}{\deg \gamma_3} \Delta_{3(k-1)}(\gamma_3(z))\\
		&\le \bigg(\sup_{w \in \P^1(\Qbar)} \Delta_{3(k-1)}(w)\bigg)\sum_{\gamma_3 \in S^3}\v_3(\gamma_3)\frac{e_z(\gamma_3)}{\deg \gamma_3}\\
		&= \bigg(\sup_{w \in \P^1(\Qbar)} \Delta_{3(k-1)}(w)\bigg) \cdot \sigma(z)\\
		&\le M\bigg(\sup_{w \in \P^1(\Qbar)} \Delta_{3(k-1)}(w)\bigg).
	\end{align*}
Since $z$ was arbitrary, we have
	\[
		\sup_{z \in \P^1(\Qbar)} \Delta_{3k}(z) \le
		M\bigg(\sup_{z \in \P^1(\Qbar)} \Delta_{3(k-1)}(z)\bigg),
	\]
and since $M < 1$, we have the claimed geometric decay.
\end{proof}

We are now ready to prove our stochastic equidistribution theorem:
\begin{proof}[Proof of Theorem \ref{thm:stoch-equi}]
 The theorem will follow primarily from demonstrating that
 \[
  h_S(\Delta_{n,\al}) \ra 0 \quad\text{as}\quad n\ra\infty,
 \]
 so we will start by showing this. Notice that, by construction of our random backwards orbit measures $\Delta_{n,\al}$, we have
 \[
  \bE_S \varphi_* (\Delta_{n+1,\al}) = \Delta_{n,\al} \quad\text{for all}\quad n\geq 0.
 \]
 Applying Proposition \ref{prop:stochastic-degree-scaling}, we see that 
 \[
  h_S(\Delta_{n+1,\al}) = \bE_S \frac{h_S(\varphi_*(\Delta_{n+1,\al}))}{\deg \varphi} \leq \frac12 \bE_S h_S(\varphi_*(\Delta_{n+1,\al})),
 \]
 as $\deg\varphi \geq 2$ for every $\varphi\in S$. Now, applying Lemma \ref{prop:height-average}, we can say that
 \begin{equation}
  \bE_S h_S(\varphi_*(\Delta_{n+1,\al})) = h_S\left(\bE_S \varphi_*(\Delta_{n+1,\al})\right) = h_S(\Delta_{n,\al}).
 \end{equation}
 It follows that 
 \[
  h_S(\Delta_{n+1,\al}) \leq \frac12 h_S(\Delta_{n,\al}),
 \]
 so by induction,
 \[
  h_S(\Delta_{n,\al}) \leq \frac{1}{2^n} h_S(\al)\quad\text{for all}\quad n\geq 0.
 \]
 It follows that $h_S(\Delta_{n,\al}) \ra 0$ as claimed. The $h_S$ height is given by a generalized adelic measure by \ref{thm:stoch-is-gen-adelic}, and by Proposition \ref{prop:backwards-orbits-are-well-distr}, the $\Delta_{n,\al}$ measures are well-distributed. Theorem \ref{thm:gen-adelic-equi} therefore applies and gives the desired result.
\end{proof}

\subsection{Stochastic Julia and Fatou sets}
Let $(S,\nu_1)$ be an $L^1$ height controlled stochastic family of rational maps, each of degree at least $2$, and suppose that $\nu_1$ is strictly positive. Let $\rho$ denote the canonical generalized adelic measure such that $h_S = h_\rho$, as constructed above. For any place $y\in Y$ for which $C(y)<\infty$, we define the \emph{stochastic Julia set} at the place $y$ to be
\[
 \cJ_{S,y} = \supp(\rho_y),
\]
and we define the \emph{stochastic Fatou set} to be 
\[
 \cF_{S,y} = \sP^1_y \setminus \cJ_{S,y}.
\]
 Many of the usual results one expects for Julia sets and Fatou sets are not always true for stochastic systems. We conclude with the following simple example which illustrates some of the key differences. 
\begin{example}
 Let $S = \{\varphi, \psi\}$ where $\varphi(z) = z^2$, $\psi(z) = 2z^2$ with $\nu_1(\varphi) = \nu_1(\psi) = 1/2$. Notice that $S$ is defined over $\bQ$, and since $S$ contains only finitely many maps, there are only finitely many bad places (namely, $p=2,\infty$), and so the associated generalized adelic measure is also an adelic measure over $\bQ$ in the original sense of Favre and Rivera-Letelier. 

 We claim that, at the archimedean place, the Julia set is the annulus
 \[
  \cJ_{S,\infty} = \{ z\in \bC : 1/2\leq \abs{z}\leq 1\}
 \]
 and $\rho_{S,\infty}$ is the radial measure which satisfies
 \[
  \int_{D(0,r)} d\rho_{S,\infty} = 
  \begin{cases}
   0             & \text{if }r \leq 1/2,\\
   1 + \log_2(r) & \text{if }1/2\leq r \leq 1,\\
   1             & \text{if }r \geq 1,
  \end{cases}
 \]
 or equivalently,
 \[
  d\rho_{S,\infty}(re^{i\theta}) = \begin{cases}
   \displaystyle \frac{dr}{r\log 2}\frac{d\theta}{2\pi} &\text{if }1/2\leq r\leq 1,\\
   0 &\text{otherwise}.
  \end{cases}
 \]
 To see this, note that 
 \[
  g_{1,\infty}(z) = \frac12\left(\frac12 \log^+\,\abs{2z^2} + \frac12 \log^+ \abs{z^2}\right) - \log^+\,\abs{z}.
 \]
 If we denote the $n$th approximation to the local height by
 \[
  \hat\lambda_{n,\infty}(z) = \bE_{S^n} \frac{1}{\deg \gamma_n} \log^+\,\abs{\gamma_n(z)},
 \]
 and let $\hat\lambda_{0,\infty}(z) = \log^+\,\abs{z}$ by convention, then our $n$th term in the telescoping series that determines the Green's function is 
 \[
  g_{n,\infty}(z) = \hat\lambda_{n,\infty}(z) - \hat\lambda_{n-1,\infty}(z)\quad\text{for all}\quad n\geq 1.
 \]
 It follows that
 \[
 \rho_{n,\infty} = \Delta \hat\lambda_{n,\infty}|_\bC \quad\text{and that}\quad \lim_{n\ra\infty} \rho_{n,\infty} = \rho_\infty
 \]
 in the sense of weak convergence of measures.\footnote{We restrict to $\bC$ here out of convenience because the logarithmic singularity of $\hat\lambda_{n,\infty}$ at $z=\infty$ results in a term of the form $-\delta_\infty$ in the Laplacian, which is cancelled by the same term from the Laplacian of the $\log^+\,\abs{z}$ function.} Now, it is easy to check that 
 \[
  \hat\lambda_{n,\infty}(z) = \frac{1}{2^n} \sum_{i=0}^{2^n - 1}\log^+\,\left|2^{i/2^n}z\right| \ra \int_0^1 \log^+\,\abs{2^xz}\,dx\quad\text{as}\quad n\ra\infty.
 \]
 Of course, for each $r>0$, the function of the form $\log^+\,\abs{z/r}$ is subharmonic and has Laplacian the normalized arc length measure on the circle $\{z\in\bC : \abs{z} = r\}$. It follows that $\rho_\infty$ does not charge any individual circle, and so the Portmanteau theorem for weak convergence of measures tells us that 
 \begin{align}
  \begin{split}
  \rho_\infty(D(0,r)) &= \lim_{n\ra\infty}  \Delta \hat\lambda_{n,\infty}(D(0,r))\\
  &= \lim_{n\ra\infty}  \frac{\# \{0\leq i < 2^n : 2^{-i/2^n} < r \}}{2^n},
  \end{split}
 \end{align}
which gives us the desired result. 

 At the prime $p=2$, the result is similar. We claim that 
 \[
  \cJ_{S,2} = [\zeta_{0,1}, \zeta_{0,2}],
 \]
 where $\zeta_{a,b}$ is the point on the Berkovich projective line $\sP^1_2 = \sP^1(\bC_2)$ which, as a seminorm on $\bC_2[x]$, corresponds to the sup norm on the disc $D(a,b)$ in $\bP^1(\bC_2)$, and $[\zeta, \eta]$ denotes the natural line segment on the Berkovich tree connecting two points such that $\zeta\prec \eta$ under the natural ordering on the Berkovich tree. Let $dx$ denote the path length metric on $[\zeta_{0,0}, \zeta_{0,\infty}]$ (the `backbone' of the Berkovich tree for $\sP^1_2$). Then, extending the function $\log^+\abs{2^{i/2^n}z}_2$ to $\sP^1_2$ naturally by continuity, one has the Laplacian
 \[
  \Delta \log^+\abs{2^{i/2^n}z}_2 = \delta_{\zeta_{0, 2^{i/2^n}}} - \delta_\infty,
 \]
 and so the same argument as above in the archimedean case establishes that 
 \[
  d\rho_{S,2}(x) = \begin{cases}
   \displaystyle \frac{dx}{x\log 2} &\text{if }1\leq x\leq 2,\\
   0 &\text{otherwise}.
  \end{cases}
 \]
 Next, we observe that the exceptional set of this system is $E_S=\{0,\infty\}$, as both points are exceptional and fixed for each map in $S$, so they are exceptional for the whole system. 

 Finally, we observe that by Theorem \ref{thm:stoch-equi}, for any $\al\in \bP^1(\Qbar)\setminus\{0,\infty\}$, the random backwards orbit measures $\Delta_{\al,n}$ converge in the weak sense of measures to $\rho_{S,\infty}$ at the infinite place, $\rho_{S,2}$ at $p=2$, and to the standard measure $\rho_{S,p}=\lambda_p$ for $p\neq 2,\infty$.
\end{example}

In the sequel \cite{DFT_II} to this paper the authors will prove various analogues of classical results about the Julia and Fatou sets for stochastic dynamics, including backwards invariance of the Julia set (and hence forward invariance of the Fatou set) and various topological characterizations of the Julia set which can be proven using the equidistribution results of this article. We note that there is a large existing literature of dynamics for semigroups of rational functions, originating in the work of \cite{HinkMartinI}. Note that our stochastic dynamical systems are in fact semigroups of rational functions, but with the additional data of a probability measure on the generating set. However, the theory is closely related, and in fact, we will show that support of our canonical measure coincides with the definition of the Julia set of the semigroup; in particular, so long as the measure $\v_1$ remains strictly positive (i.e., no map is assigned probability 0, and thus effectively removed from the system) then the support of the Julia set is independent of the particular choice of probability $\v_1$. The particular distribution on the Julia set given by the canonical measure, however, still depends on $\v_1$.

\nocite{FRLcorrigendum}

\bibliographystyle{amsalpha}
\bibliography{bib}

\end{document}